\documentclass[fleqn]{amsart}
\usepackage[utf8]{inputenc}
\usepackage{amssymb,latexsym}
\usepackage{amsmath}
\usepackage{calrsfs}
\usepackage{enumerate}

 \usepackage{amsfonts}
\usepackage{amsthm}
\usepackage{calrsfs}
\usepackage{enumerate}

\newcommand{\PG}{\mathrm{PG}}

\newcommand{\FF}{\mathbb{F}}

\newcommand{\KK}{\mathbb{K}}

\newcommand{\eps}{\varepsilon}
\newcommand{\teps}{\widehat{\varepsilon}}
\newcommand{\neps}{\varepsilon_{\mathrm{nat}}}
\newcommand{\tneps}{\widehat{\varepsilon}_{\mathrm{nat}}}
\newcommand{\ch}{\mathrm{char}}
\newcommand{\Der}{\mathrm{Der}}
\newcommand{\Kder}{N_{\mathrm{der}}}

\newcommand{\drk}{\mathrm{drk}}

\newcommand{\Tr}{\mathrm{Tr}}

\newcommand{\blank}{\,{\boldsymbol{\cdot}}\,}

\newtheorem{theorem}{Theorem}[section]
 \newtheorem{corollary}[theorem]{Corollary}

\newtheorem{claim}[theorem]{Claim}

\newtheorem{example}{Example}
\newtheorem{lemma}[theorem]{Lemma}

\newtheorem{proposition}[theorem]{Proposition}
\newtheorem{definition}[theorem]{Definition}
\newtheorem{remark}[example]{Remark}

\begin{document}
\title[The relatively universal cover of the adjoint embedding of $A_{n,\{1,n\}}$]{The relatively universal cover of the natural embedding of the long root geometry for the group $\mathrm{SL}(n+1,\KK)$}
\author{I. Cardinali}
\address[I. Cardinali, A. Pasini]{Dep. Information Engineering and Mathematics, University of Siena, Via Roma 56, I-53100 Siena, Italy}
\email{ilaria.cardinali@unisi.it}
\email{antonio.pasini@unisi.it}

\author{L. Giuzzi}
\address[L. Giuzzi]{DICATAM, University of Brescia, Via Branze 43, I-25123 Brescia, Italy}
\email{luca.giuzzi@unibs.it}

\author{A. Pasini}

\keywords{Long root geometry; relatively universal embedding; adjoint module}
\subjclass{22E46, 14M15}

\maketitle

\begin{abstract}
	The long root geometry $A_{n,\{1,n\}}(\KK)$ for the special linear group $\mathrm{SL}(n+1,\KK)$ admits an embedding in the (projective space of) the vector space of the traceless square matrices of order $n+1$ with entries in the field $\KK$, usually regarded as the {\em natural} embedding of $A_{n,\{1,n\}}(\KK)$. S. Smith and H. V\"{o}lklein in~\cite{SV}
 have proved that the natural embedding of $A_{2,\{1,2\}}(\KK)$ is relatively universal if and only if $\KK$ is either algebraic over its minimal subfield or perfect with positive characteristic. They also give some information on the relatively universal embedding of $A_{2,\{1,2\}}(\KK)$ which covers the natural one, but that information is not sufficient to exhaustively describe it. The ``if" part of Smith-V\"{o}lklein's result also holds true for any $n$, as proved by  V\"{o}lklein in~\cite{Volklein} in his investigation of the adjoint modules of Chevalley groups.
In this paper we give an explicit description of the relatively universal embedding of $A_{n,\{1,n\}}(\KK)$ which covers the natural one. In particular, we prove that this relatively universal embedding has (vector) dimension equal to $\mathfrak{d}+n^2+2n$ where $\mathfrak{d}$ is the transcendence degree of $\KK$ over its minimal subfield (if $\ch(\KK) = 0$) or the generating rank of $\KK$ over $\KK^p$ (if $\ch(\KK) = p > 0$). Accordingly, both the ``if" and the ``only if" part of Smith-V\"{o}lklein's result hold true for every $n \geq 2$.
 \end{abstract}

\section{Introduction}\label{Introduction}

\subsection{Definitions, basics and a few known results}

Given a field $\KK$ and an integer $n \geq 2$, we denote by $A_{n,\{1,n\}}(\KK)$ the $\{1,n\}$-Grassmannian of a geometry of type $A_n$ defined over $\KK$. Explicitly, $A_{n,\{1,n\}}(\KK)$ is the point-line geometry defined as follows. Its points are the point-hyperplane flags $(p,H)$ of $\PG(n,\KK)$ and it admits two types of lines, namely the sets $\ell_{p,S}$ of all points $(p,X)$ of $A_{n,\{1,n\}}(\KK)$ with $S$ a given subspace of codimension $2$ of $\PG(n,\KK)$, $p$ a given point of $S$ and $X$ a generic hyperplane of $\PG(n,\KK)$ containing $S$ and the sets $\ell_{L,H}$ of all points $(x,H)$ of $A_{n,\{1,n\}}(\KK)$ with $H$ a given hyperplane of $\PG(V)$, $L$ a given line of $H$ and $x$ a generic point of $L$. In particular, when $n = 2$ the $2$-codimensional subspaces and the hyperplanes of the projective plane $\PG(2,\KK)$ are the points and the lines of $\PG(2,\KK)$. In this case $\ell_{p,S}$ is the set of flags $(p,X)$ of $\PG(2,\KK)$ with $X$ a line of $\PG(2,\KK)$ through  the point $p$ and $\ell_{L,H}$ is the set of flags $(x,L)$ with $x$ a point of the line $L$ of $\PG(2,\KK)$. The geometry $A_{n,\{1,n\}}(\KK)$ is called the {\it point-hyperplane geometry of} $\PG(n,\KK)$.

\subsubsection{Terminology for projective embeddings of point-line geometries}

We assume that the reader is familiar with the notions of projective embeddings of a point-line geometry (henceforth also called {\em embeddings} for short), and projections and isomorphisms of embeddings. Given two embeddings $\eps$ and $\eps'$ of the same geometry, we say that $\eps$ {\em covers} $\eps'$ if $\eps'$ is isomorphic to a projection of $\eps$. We recall that an embedding is
{\em relatively universal} if it is not covered by any other embedding;
it is {\em absolutely universal} if it covers any other embedding of the given geometry.

We refer the reader to Appendix~\ref{appendix A} for a synthetic survey of the relevant notions we will use and also to Ronan \cite{Ronan}, Shult \cite{SH93} and \cite[chp. 4]{SH11} and Kasikova and Shult \cite{K-S} for more on this topic. We only recall the following fact here: every projective embedding $\eps$ of a given geometry $\Gamma$ is covered by a relatively universal embedding $\teps$, uniquely determined up to isomorphisms and characterized by the following property: $\teps$ covers all embeddings of $\Gamma$ that cover $\eps$. We call $\teps$ the {\em (relatively) universal cover} of $\eps$ (see~\cite{SH93}).

\subsubsection{The natural embedding of $A_{n,\{1,n\}}(\KK)$}

Let $A$ be the $\KK$-vector space of the $(n+1)\times (n+1)$ matrices with entries in $\KK$ having null trace.
The \emph{natural embedding} of $A_{n,\{1,n\}}(\KK)$ is
the  projective embedding $\neps\colon A_{n,\{1,n\}}(\KK) \rightarrow \PG(A)$ mapping the point $(p,H)$ of $A_{n,\{1,n\}}(\KK)$, where $p$ is represented by a non-null (row) vector $v\in V = V(n+1,\KK)$ and $H$ is represented by a non-null linear functional $\alpha\in V^*$ (regarded as a column vector), to the point of $\PG(A)$ represented by the $(n+1)\times (n+1)$-matrix $\alpha\otimes v$.

Note that, as proved in \cite[Corollary 1.15]{AP-univ-emb}, when the field $\KK$ admits non-trivial automorphisms, then $A_{n,\{1,n\}}(\KK)$ admits no absolutely universal embedding. Nevertheless, the (relatively) universal cover of $\neps$ always exists. We shall denote it by $\tneps$.

The natural embedding  $\neps$ is {\it homogeneous}, which means that the full
automorphism group of $A_{n,\{1,n\}}(\KK)$ lifts to $\PG(A)$ as a group of collineations. In particular, with $G = \mathrm{SL}(n+1,\KK)$, the group $G$ acts (in general unfaithfully) on $A$ turning it into a $G$-module. The action of $G$ on $A$ is the adjoint one, that is every $g\in G$ maps the matrix $a\in A$ onto $a\cdot g:=g^{-1}ag\in A$.

Note that $A$ is just the Lie algebra of $G$, which is the Weyl module for $G$ associated with the highest root of the root system of type $A_n$ (whence the name {\em long root geometry of $G$}, often used for $A_{n,\{1,n\}}(\KK)$ in the literature). Clearly, $G$ acts as $\mathrm{PSL}(n+1,\KK)$ on both the geometry $A_{n,\{1,n\}}(\KK)$ and the vector space $A$.

The full automorphism group of $A_{n,\{1,n\}}(\KK)$ is contributed by all collineations and all dualities of $\PG(n,\KK)$. Hence $G$ acts on $A_{n,\{1,n\}}(\KK)$ as a proper subgroup of that group. In particular, $G$ does not act transitively on the set of lines of $A_{n,\{1,n\}}(\KK)$.  Indeed no element of $G$ switches the two families of lines of $A_{n,\{1,n\}}(\KK)$. In order to switch them we need a duality of $\PG(n,\KK)$. Nevertheless $G$ is close to being flag-transitive on $A_{n,\{1,n\}}(\KK)$. Indeed it acts transitively on the set of points of $A_{n,\{1,n\}}(\KK)$ and permutes transitively the lines of each of the two families. Moreover, if $\ell$ is a line of $A_{n,\{1,n\}}(\KK)$ then the stabilizer of $\ell$ in $G$ acts $2$-transitively on the set of points of $\ell$ and, for every point $x = (p,H)$ of $A_{n,\{1,n\}}(\KK)$, the stabilizer of $x$ in $G$ acts transitively on the set of lines through $x$ in each of the two families, namely the lines $\ell_{L,H}$ for $L$ a line of $H$ through $p$ and the lines $\ell_{p,S}$ for $S$ a hyperplane of $H$ containing $p$.

\subsubsection{The relatively universal cover of $\neps$}
Since $\neps$ is homogeneous, its relatively universal cover $\tneps\colon A_{n,\{1,n\}}(\KK) \rightarrow \PG(\widehat{A})$ is $G$-homogeneous as well, hence the vector space $\widehat{A}$ hosting the relatively universal cover $\tneps$ can be regarded as a $G$-module. The group $\widehat{G}$ induced by $G$ on $\PG(\widehat{A})$ stabilizes the $\tneps$-image $\tneps(A_{n,\{1,n\}}(\KK))$ of $A_{n,\{1,n\}}(\KK)$ and the transitivity properties of $G$ on $A_{n,\{1,n\}}(\KK)$ yield corresponding transitivity properties of $\widehat{G}$ on  $\tneps(A_{n,\{1,n\}}(\KK))$.

Let $x_0$ be a point of $A_{n,\{1,n\}}(\KK)$ and let $\ell_0$ and $\ell'_0$ be two lines
 on $x_0$, one from each of the two families of lines of $A_{n,\{1,n\}}(\KK).$ Moreover, let $\widehat{p}_0$ be a point of $\PG(\widehat{A})$ and $\widehat{L}_0$ and $\widehat{L}'_0$ be two lines of $\PG(\widehat{A})$ through $\widehat{p}_0$ such that the projection of $\widehat{A}$ onto $A$ maps $\widehat{p}_0$, $\widehat{L}_0$ and $\widehat{L}'_0$ onto $\neps(x_0)$, $\neps(\ell_0)$, $\neps(\ell'_0)$ respectively. If we know how $\widehat{G}$ acts on $\widehat{A}$ then $\tneps(A_{n,\{1,n\}}(\KK))$ is basically known. Indeed, for every point $x$ of $A_{n,\{1,n\}}(\KK)$, we have $x = g(x_0)$ for some
$g\in G$. Accordingly, we can assume that $\tneps(x) = \widehat{g}(\tneps(x_0))$, where $\widehat{g}$ is the lifting of $g$ to $\widehat{A}$. Similarly for lines.

We call a pair of triples  $\{(x_0, \ell_0, \ell'_0), (\widehat{p}_0, \widehat{L}_0, \widehat{L}'_0)\}$ as above a {\em pivot} for $\tneps$.  Clearly, changing the pivot amounts to shifting $\tneps$ by a collineation of $\PG(\widehat{A})$.

Of course, in this way the embedding $\tneps$ is recovered up to collineations of $\PG(\widehat{A})$ which stabilize every fiber of the projection $\pi\colon \widehat{A}\rightarrow A.$ We can then replace $\tneps$ with $\gamma\circ\tneps$ for a collineation $\gamma$ of $\PG(\widehat{A})$ such that $\pi\circ \gamma=\pi.$

For $n=2$, Smith and V\"{o}lklein \cite{SV} prove that the module $\widehat{A}$ is a central extension of $A$, that is $\widehat{A}=M\cdot A$ where $G$ acts trivially on $M$. Then, relying on the theory of cohomology, they prove that the dual $M^*$ of $M$ is isomorphic to the first cohomology group $H^1(A^*,G)=Z^1(A^*,G)/B^1(A^*,G),$ obtaining that $\neps = \tneps$ if and only if $M^*=0$ if and only if $\KK$ is algebraic over its minimal subfield or perfect of positive characteristic. Relying on this result, V\"{o}lklein \cite[Corollary 1]{Volklein} proves that, if $\KK$ has these properties, then, for every split Chevalley group $G$ defined over $\KK$ but not of Dynkin type $C_n$, the adjoint module of $G$ affords a relatively universal projective  embedding of the long root geometry associated to $G$ (the $C_n$-case is excluded because in this case the `adjoint embedding' of the long-root geometry is veronesean instead of projective). In particular, if $\KK$ is either algebraic or perfect of positive characteristic then the natural embedding $\neps$ of $A_{n,\{1,n\}}(\KK)$ is relatively universal.

\subsection{The main result of this paper}\label{Intro2}
We shall give an explicit description of the relatively universal cover $\tneps:A_{n,\{1,n\}}(\KK)\rightarrow\PG(\widehat{A})$ of the natural embedding $\neps : A_{n,\{1,n\}}(\KK)\rightarrow \PG(A)$ for any $n$. Generalizing the result obtained by Smith and V\"{o}lklein \cite{SV} for the case $n = 2$, we firstly prove the following.

\begin{lemma}\label{main lemma}
As a $G$-module, $\widehat{A}$ is a non-split central extension $\widehat{A} = M\cdot A$ of $A$ and $M^* \cong H^1(A^*, G)$.
\end{lemma}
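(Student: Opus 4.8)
The plan is to follow the template of Smith and V\"{o}lklein's proof for $n=2$ and to lift each of its steps to arbitrary $n$. Since both $\neps$ and its universal cover $\tneps$ are homogeneous, the projection $\pi\colon\widehat{A}\rightarrow A$ expressing that $\tneps$ covers $\neps$ is $G$-equivariant, so we have an exact sequence of $G$-modules $0\rightarrow M\rightarrow\widehat{A}\xrightarrow{\ \pi\ }A\rightarrow 0$ with $M=\ker\pi$. I would prove, in order: that $G$ centralizes $M$; that $M^{*}\cong H^{1}(G,A^{*})$; and, as a by-product of the latter, that the sequence is non-split whenever $M\neq0$. Two structural facts will be used repeatedly: that $\widehat{A}$, being the ambient space of an embedding, is spanned by the point-images $\tneps(x)$, and that these point-images form a single $G$-orbit, because the rank-one traceless matrices $\alpha\otimes v$ (with $\alpha(v)=0$) are exactly the long-root elements of $A=\mathfrak{sl}_{n+1}(\KK)$ and generate $A$ as a $G$-module.

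For centrality, fix a point $x_{0}$, let $e_{0}$ span $\neps(x_{0})$, and choose a lift $\widehat{e}_{0}\in\widehat{A}$. Every point-image is then spanned by a lift $\widehat{e}_{g}$ of $g\,e_{0}$, and the lifting defect $g\,\widehat{e}_{0}-\widehat{e}_{g}$ lies in $M$. Exploiting the transitivity of the stabilizer of a line on the points of that line together with the linear relations forced by the two families of lines of $A_{n,\{1,n\}}(\KK)$ (both recalled in the Introduction), I would show that these defects, and the relation vectors attached to collinear triples, are fixed by $G$ and generate $M$; since $G$ permutes the $\widehat{e}_{g}$ and the latter span $\widehat{A}$, it follows that $G$ acts trivially on $M$. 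This is the step in which the combinatorics of the two line-families replaces the single ad hoc computation available at $n=2$; I expect it to be routine but notation-heavy.

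Dualizing gives $0\rightarrow A^{*}\xrightarrow{\ \pi^{*}\ }\widehat{A}^{*}\xrightarrow{\ \rho\ }M^{*}\rightarrow0$ with $G$ acting trivially on $M^{*}$, so $(M^{*})^{G}=M^{*}$ and the long exact cohomology sequence yields a connecting homomorphism $\delta\colon M^{*}\rightarrow H^{1}(G,A^{*})$; equivalently, as $M^{*}$ is a trivial module, the extension class of $\widehat{A}^{*}$ lives in $\mathrm{Ext}^{1}_{G}(M^{*},A^{*})\cong\mathrm{Hom}\!\left(M^{*},H^{1}(G,A^{*})\right)$ and coincides with $\delta$. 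Injectivity of $\delta$ is equivalent to the vanishing of the restriction map $(\widehat{A}^{*})^{G}\rightarrow M^{*}$, i.e.\ to the statement that every $G$-invariant functional on $\widehat{A}$ kills $M$; this I would derive from the spanning of $\widehat{A}$ by long-root vectors together with the relation $A=[G,A]$ and the centrality of $M$. Once $\delta$ is injective, a $G$-splitting of the original sequence would force the dual sequence to split, hence $\delta=0$, hence $M=0$; so the extension is non-split whenever $M\neq0$, which is exactly what the notation $\widehat{A}=M\cdot A$ records.

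It remains to prove that $\delta$ is surjective, and this is the heart of the matter. Each class of $H^{1}(G,A^{*})\cong\mathrm{Ext}^{1}_{G}(\KK,A^{*})$ gives, upon dualizing, a central extension $0\rightarrow\KK\rightarrow E\rightarrow A\rightarrow0$ of $G$-modules, and I would need to check that $E$ carries a projective embedding of $A_{n,\{1,n\}}(\KK)$ covering $\neps$; the universal property of $\tneps$ then makes every such embedding a projection of $\tneps$, placing the class in the image of $\delta$, and letting the class vary forces $\delta$ onto. The main obstacle is precisely this geometric realization: one must verify, uniformly in $n$, that the cocycles representing $H^{1}(G,A^{*})$ define liftings of the point-images for which each line of $A_{n,\{1,n\}}(\KK)$ still maps to a genuine projective line --- that is, that the module-theoretic extensions really are embeddings and not merely abstract extensions. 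This translation between cocycles and line-preserving liftings, rather than the homological bookkeeping, is where the bulk of the work will lie.
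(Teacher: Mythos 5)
Your plan has the right architecture (centrality first, then the cohomological identification, with non-splitness as a by-product), and your route to $M^*\cong H^1(G,A^*)$ via the long exact cohomology sequence is legitimate and in fact closer to Smith--V\"{o}lklein's original than to the paper's, which instead develops an elementary machinery of $1$-universal and $1$-non-split central extensions of a rigid module (Theorem \ref{M*}). But two of your steps are genuine gaps, not routine verifications. First, centrality. Your lifting defects $g\widehat{e}_0-\widehat{e}_g$ depend on an arbitrary choice of lifts, and there is no reason they should be fixed by $G$ or span $M$; asserting that the combinatorics of the two line families will make this ``routine but notation-heavy'' is not a proof. The paper's Proposition \ref{lemma A} proves centrality by an entirely different and substantially harder argument: it uses that the natural embedding of $\Gamma(\KK_0)$ over the prime field is \emph{absolutely} universal (Cooperstein, Blok--Pasini), so $M(\KK_0)=\{0\}$; that a simple field extension enlarges a generating set of the geometry by at most one point (Blok--Pasini), so the kernel grows by at most one dimension at each such step; and then Zorn's lemma plus perfectness of $\mathrm{SL}(n+1,\KK_0)$ to kill the action on each one-dimensional increment. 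None of this external input appears in your sketch, and I do not see how to avoid it.

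Second, surjectivity of $\delta$, which you yourself identify as ``where the bulk of the work will lie'' and then do not carry out. This is precisely the content of Proposition \ref{lemma C}: one takes the explicit isomorphism $H^1(G,A^*)\cong\Der(\KK)$, realizes the $1$-extension attached to a derivation $d$ by the cocycle $\phi_d(a,g)=\Tr(g\cdot d(g^{-1})\cdot a)$, and checks that the point represented by $(0,e_{n+1,1})$ is \emph{very well stabilized}: $\phi_d(e_{n+1,1},g)=0$ for every $g$ stabilizing $\langle e_{n+1,1}\rangle$, and similarly for one well-stabilized line from each of the two families through it. Only then does the general machinery (Lemma \ref{lemma 1.5}, Theorem \ref{1-universal}) convert the abstract module extension into a projective embedding covering $\neps$, hence into a quotient of $\widehat{A}$. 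Your injectivity argument is also incomplete as stated: it ultimately reduces to the fact that $\widehat{A}$ is a totally non-split extension, which the paper derives from the spanning of $\PG(\widehat{A})$ by $\tneps(\Gamma)$ (Lemma \ref{lemma 1.4}), not from $A=[G,A]$ and centrality alone. So the skeleton is sound, but the two pillars it rests on are missing.
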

The vector space $H^1(A^*,G)$ is in turn isomorphic to the space $\mathrm{Der}(\KK)$ of all derivations of $\KK$ (V\"{o}lklein \cite{V}, also Theorem \ref{C1} of this paper). So, the dual $M^*$ of the space $M$ we are looking for must be isomorphic to $\mathrm{Der}(\KK)$.

The vector space $\mathrm{Der}(\KK)$ can be described as follows. Let $\Kder(\KK)$ be the largest subfield of $\KK$ such that all derivations of $\KK$ induce the trivial derivation on it. A subset $\Omega$ of $\KK\setminus\Kder(\KK)$ is a {\em derivation basis} of $\KK$ if for every mapping $\nu:\Omega\rightarrow\KK$ there exists a unique derivation $d_\nu\in\mathrm{Der}(\KK)$ such that $d_\nu$ induces $\nu$ on $\Omega$. Referring to Appendix~\ref{appendix B} for more details, here we only recall the following: every field admits derivation bases and all derivation bases of a given field have the same cardinality. So, given a derivation basis $\Omega$ of $\KK$, the vector space $\mathrm{Der}(\KK)$ is canonically isomorphic to the vector space $\KK^\Omega$ of all $\KK$-valued functions on $\Omega$. Thus, we are looking for a $\KK$-vector space $M$ such that $M^*\cong \KK^\Omega$.

With $\Omega$ as above, for every $\omega\in \Omega$ let $d_\omega$ be the derivation of $\KK$ such that $d_\omega(\omega) = 1$ and $d_\omega(\omega') = 0$ for every $\omega'\in\Omega\setminus\{\omega\}$. The set $\{d_\omega\}_{\omega\in\Omega}$ is linearly independent in $\Der(\KK)\cong \KK^\Omega$ and spans a subspace $\Der_\Omega(\KK)$ of $\Der(\KK)$ isomorphic to the subspace $\KK^{\Omega|\mathrm{fin}}$ of $\KK^\Omega$ formed by the functions with finite support. So $\Der(\KK)$ is isomorphic to the dual of $\Der_\Omega(\KK)$, since $\KK^{\Omega}$ is isomorphic to the dual of $\KK^{\Omega|\mathrm{fin}}$.

Assuming that $M = \Der_\Omega(\KK)$ is the vector space we are looking for, so that $\widehat{A} = M\times A$ as a vector space, we have to find out how the group $G$ acts on $M\times A$. Theorem \ref{thm link} of this paper and the isomorphism $\Der(\KK)\cong H^1(A^*,G)$ as described in Theorem \ref{C1} suggest to try the following action:

\begin{equation}\label{ovA}
g~:~ (m, a)\in M\times A ~\longrightarrow ~ (m + \sum_{\omega\in\Omega}\mathrm{Tr}(g\cdot d_\omega(g^{-1})\cdot a)d_\omega, ~ g^{-1}ag  ) \in M\times A
\end{equation}
where $a\in A$ and $g\in G$ are regarded as matrices, $d_\omega(g^{-1})$ is the matrix obtained by applying $d_\omega$ to each of the entries of the matrix which represents $g^{-1}$, $g\cdot d_\omega(g^{-1})\cdot a$ is the product of the matrices $g$, $d_\omega(g^{-1})$ and $a$ and $\mathrm{Tr}(g\cdot d_\omega(g^{-1})\cdot a)$ is the trace of the matrix obtained in that way.

Note that $d_\omega(g^{-1})$ is the null matrix for all but at most finitely many choices of $\omega\in\Omega$ since, as we shall show in Appendix \ref{appendix B} (Proposition B2), for every $k\in \KK$ we have $d_\omega(k)\neq 0$ for finitely many (possibly no) choices of $\omega\in\Omega$. Hence only a finite number of non-zero summands are involved in the sum $\sum_{\omega\in\Omega}\mathrm{Tr}(g\cdot d_\omega(g^{-1})\cdot a)d_\omega$. So, even when $\Omega$ is infinite, that sum makes sense.

The following, to be proved in Section~\ref{Sec 3}, is our main theorem.

 \begin{theorem}\label{main theorem}
As a $G$ module, $\widehat{A}$ is the same as the $G$-module defined by {\rm \eqref{ovA}} on $M\times A$ where $M = \Der_\Omega(\KK)$ and, up to collineations of $\PG(\widehat{A})$, we can always assume to have chosen the pivot of the embedding $\tneps:A_{n,\{1,n\}}(\KK)\rightarrow\PG(\widehat{A})$ in such a way that $\tneps$ acts as follows.

For every point $(p,H)$ of $A_{n,\{1,n\}}(\KK)$, if $v\in V = V(n+1,\KK)$ represents $p$ and $\lambda\in V^*$ represents $H$, then $\tneps$ maps $(p,H)$ onto the point of $\PG(\widehat{A})$ represented by the vector
\begin{equation}\label{hatA}
(\sum_{\omega\in\Omega}v\cdot d_\omega(\lambda)\cdot d_\omega, ~ \lambda\cdot v)
\end{equation}
where, regarded $\lambda = (\lambda_i)_{i=1}^{n+1}$ as a (column) vector, we put $d_\omega(\lambda) := (d_\omega(\lambda_i))_{i=1}^{n+1}$.
\end{theorem}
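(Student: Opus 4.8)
The plan is to prove the two assertions of the theorem in turn: first that the $G$-module $M\times A$ defined by \eqref{ovA} is isomorphic to $\widehat{A}$, and then that the explicit map \eqref{hatA} realises $\tneps$ for a suitable pivot. Throughout I abbreviate $F_\omega(g):=g\cdot d_\omega(g^{-1})$, an $(n+1)\times(n+1)$ matrix over $\KK$, and I write $\phi_\omega(g)\in A^*$ for the functional $a\mapsto\Tr(F_\omega(g)\,a)$. The single algebraic fact driving everything is the behaviour of $F_\omega$ under multiplication, which I derive from the Leibniz rule.

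First I would check that \eqref{ovA} really defines a $G$-module. Applying $d_\omega$ entrywise to $(gh)^{-1}=h^{-1}g^{-1}$ gives $d_\omega((gh)^{-1})=d_\omega(h^{-1})g^{-1}+h^{-1}d_\omega(g^{-1})$, whence
\[ F_\omega(gh)=F_\omega(g)+g\,F_\omega(h)\,g^{-1}. \]
Together with the cyclic invariance of the trace this is exactly the $1$-cocycle identity $\phi_\omega(gh)=\phi_\omega(g)+g\cdot\phi_\omega(h)$ for the contragredient action on $A^*$, and it makes \eqref{ovA} satisfy the action axioms. Setting $a=0$ shows that $M$ is fixed pointwise while the second coordinate carries the adjoint action on the quotient $A$; hence \eqref{ovA} is a central extension $M\cdot A$ of $A$ by the trivial module $M$. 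Such an extension is determined up to isomorphism by the linear map $\kappa\colon M^*\to H^1(G,A^*)$ sending $\xi$ to the class of the cocycle $g\mapsto\big(a\mapsto\xi(c(g)(a))\big)$, where $c(g)(a)=\sum_\omega\Tr(F_\omega(g)a)d_\omega$; explicitly $\kappa(\xi)=[\sum_\omega\xi(d_\omega)\phi_\omega]$, a well-defined class since for each $g$ only finitely many $F_\omega(g)$ are nonzero. By Lemma~\ref{main lemma} the extension $M\cdot A$ equals $\widehat{A}$ precisely when $\kappa$ is the isomorphism $M^*\cong H^1(G,A^*)$ there. As $M=\Der_\Omega(\KK)$ has basis $\{d_\omega\}$, we have $M^*\cong\KK^\Omega\cong\Der(\KK)$, and under these identifications $\kappa(\xi)=[\sum_\omega\xi_\omega\phi_\omega]$; so it suffices to prove that $[\phi_\omega]$ corresponds to $d_\omega$ under $H^1(G,A^*)\cong\Der(\KK)$, whence $\kappa$ is the identity and $M\cdot A\cong\widehat{A}$.

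For the embedding formula I would define $\widetilde{\varepsilon}(p,H):=\big(\sum_\omega (v\cdot d_\omega(\lambda))\,d_\omega,\ \lambda\cdot v\big)$ and verify the properties of a projective embedding covering $\neps$. Well-definedness on $\PG(\widehat{A})$ is the key sanity check: rescaling $\lambda\mapsto t\lambda$ produces the extra term $\sum_\omega d_\omega(t)(v\lambda)d_\omega$, which vanishes because $v\lambda=\lambda(v)=0$ expresses the incidence $p\in H$, so both coordinates scale by the same factor. The same incidence kills the spurious derivative term $d_\omega(r)$ along a line $\ell_{p,S}$, where the varying functional is $\lambda_1+r\lambda_2$ with $v\in\ker\lambda_1\cap\ker\lambda_2$; for lines $\ell_{L,H}$ the varying vector $v$ enters only outside $d_\omega$; in both families $\widetilde{\varepsilon}$ is therefore affine in the line parameter and maps lines to lines. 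Finally, $G$-equivariance is the same computation as above: writing the image of the flag under $g$ as $(vg,\,g^{-1}\lambda)$ and expanding $d_\omega(g^{-1}\lambda)$ by Leibniz yields the new $M$-coordinate $m+\sum_\omega(v\,F_\omega(g)\,\lambda)d_\omega$, and the identity $\Tr(F_\omega(g)\,\lambda v)=v\,F_\omega(g)\,\lambda$ matches this with \eqref{ovA}. Thus $\widetilde{\varepsilon}$ is a $G$-equivariant embedding whose $A$-component is $\neps$, hence injective and non-degenerate; as any two $G$-equivariant embeddings covering $\neps$ agree once their values on a pivot are fixed, choosing the pivot as permitted gives $\tneps=\widetilde{\varepsilon}$, i.e. \eqref{hatA}.

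The main obstacle I anticipate is the cohomological identification $[\phi_\omega]\leftrightarrow d_\omega$. An isomorphism $M^*\cong H^1(G,A^*)$ is granted by Lemma~\ref{main lemma}, but the theorem needs the classifying map $\kappa$ to be the \emph{identity}, so I must match the explicit matrix cocycle with the canonical one. Using the self-duality $A\cong A^*$ afforded by the trace form, $\phi_\omega$ corresponds to the logarithmic-derivative cocycle $g\mapsto d_\omega(g)g^{-1}$ (indeed $F_\omega(g)=-d_\omega(g)g^{-1}$), which is precisely the cocycle representing $d_\omega$ under the Smith--V\"{o}lklein/V\"{o}lklein isomorphism $\Der(\KK)\cong H^1(G,A^*)$; tracking the sign and the self-duality through that isomorphism is the delicate part, and it is what upgrades ``a non-split central extension'' to ``the universal cover''. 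Everything else reduces to the single Leibniz identity for $F_\omega$ and the incidence relation $\lambda(v)=0$.
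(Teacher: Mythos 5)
Your cocycle identity $F_\omega(gh)=F_\omega(g)+g\,F_\omega(h)\,g^{-1}$, the check that \eqref{hatA} is well defined and carries lines of both families to lines (via the incidence $\lambda(v)=0$), and the equivariance computation are all sound; they essentially reproduce what the paper does in Lemma~\ref{barA2}, Corollary~\ref{barA3} and the closing computation of Section~\ref{Sec 3}. The genuine gap is in your first paragraph, at the step ``such an extension is determined up to isomorphism by the linear map $\kappa\colon M^*\to H^1(A^*,G)$\dots\ so it suffices to prove that $[\phi_\omega]$ corresponds to $d_\omega$, whence $\kappa$ is the identity and $M\cdot A\cong\widehat{A}$.'' When $\Der(\KK)$ is infinite dimensional --- the only case in which there is anything left to prove --- central extensions of $A$ by a trivial kernel $W$ are classified by cocycles $G\to\mathrm{Hom}(A,W)$ modulo coboundaries, \emph{not} by linear maps $W^*\to H^1(A^*,G)$: two extensions whose classifying maps $\kappa_1\colon W_1^*\to H^1$ and $\kappa_2\colon W_2^*\to H^1$ are both bijective are rigidly isomorphic only if $\kappa_2^{-1}\circ\kappa_1$ is the transpose of an isomorphism $W_2\to W_1$, and for infinite-dimensional kernels most automorphisms of the dual are not transposes. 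Moreover Lemma~\ref{main lemma} provides only an \emph{abstract} isomorphism $M^*\cong H^1(A^*,G)$, so the requirement that ``$\kappa$ is the isomorphism there'' does not pin down the extension.

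This is precisely the difficulty that the paper's Section~\ref{Sec 2} machinery exists to overcome. The paper shows that $\overline{A}$ is $1$-universal and $1$-non-split (Lemma~\ref{barA1}, which is in substance your claim that $\kappa$ is bijective, proved through Theorem~\ref{thm link} and the identification of $[\phi_d]$ with $d$), shows that $\overline{A}$ hosts a $G$-cover of $\neps$ (your second paragraph, or Lemma~\ref{barA2} combined with Theorem~\ref{theo 1.5}), and then invokes Corollary~\ref{1-universal ter} --- whose proof needs the rigidity of $A$, the perfectness of $G$, Lemma~\ref{nuovo1} and Theorem~\ref{1-universal bis} --- to rule out that $\overline{A}$ is a \emph{proper} quotient $\widehat{A}/K$ of $\widehat{A}$. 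Your proposal has the two outer ingredients but omits this load-bearing middle step: without it you only obtain that $\overline{A}\cong\widehat{A}/K$ for some $K$ with $(\widehat{A}/K)$'s kernel having dual isomorphic to $H^1(A^*,G)$, which for infinite $\drk(\KK)$ does not force $K=\{0\}$. Relatedly, your closing sentence that ``any two $G$-equivariant embeddings covering $\neps$ agree once their values on a pivot are fixed'' already presupposes that the two embeddings live in the same module, i.e.\ it assumes the identification $\overline{A}=\widehat{A}$ that is in question.
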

(Of course, the scalar $v\cdot d_\omega(\lambda)$ in \eqref{hatA} is the value $d_\omega(\lambda)(v)$ taken by $d_\omega(\lambda) \in V^*$ on $v$). As we shall show in the Appendix \ref{appendix B}, we have $\mathrm{Der}(\KK) = \{0\}$ if and only if $\KK$ is either algebraic over its minimal subfield or perfect of positive characteristic. Therefore,

 \begin{corollary}\label{main corollary 2}
The natural embedding $\neps$ of $A_{n,\{1,n\}}(\KK)$ is relatively universal if and only if $\KK$ is algebraic over its minimal subfield or  perfect with positive characteristic.
\end{corollary}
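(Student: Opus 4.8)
The plan is to read off the corollary as a short chain of equivalences, combining Theorem~\ref{main theorem} with the characterization of the vanishing of $\Der(\KK)$ announced just before the statement and established in Appendix~\ref{appendix B}. The content lies entirely in those two ingredients; the corollary itself is a formal consequence, so I do not expect a genuine obstacle.

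First I would record the general fact that the universal cover $\tneps$ is itself relatively universal. Indeed, if an embedding $\eps''$ covered $\tneps$, then by transitivity of covering $\eps''$ would cover $\neps$, whence $\tneps$ covers $\eps''$ by the defining property of $\tneps$ recalled in the introduction, forcing $\eps'' \cong \tneps$. Consequently $\neps$ is relatively universal if and only if $\neps \cong \tneps$, that is, if and only if the projection $\pi\colon \widehat{A}\rightarrow A$ is an isomorphism of embeddings.

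Next I would translate this into a condition on $M$. By Theorem~\ref{main theorem}, $\widehat{A}$ is the $G$-module structure carried by $M\times A$ with $M = \Der_\Omega(\KK)$, and the projection $\pi$ onto $A$ is the second-coordinate map, whose kernel is exactly $M\times\{0\}$. Hence $\pi$ is an isomorphism if and only if $M = \{0\}$. Since $\Der_\Omega(\KK)\cong\KK^{\Omega|\mathrm{fin}}$, the space of finitely supported $\KK$-valued functions on a derivation basis $\Omega$, while $\Der(\KK)\cong\KK^{\Omega}$, both spaces vanish precisely when $\Omega = \emptyset$; thus $M = \{0\}$ if and only if $\Der(\KK) = \{0\}$.

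Finally I would invoke the field-theoretic characterization $\Der(\KK) = \{0\}$ if and only if $\KK$ is algebraic over its minimal subfield or perfect of positive characteristic (Appendix~\ref{appendix B}) to conclude. The only point that demands a word of care is the very first step, namely the equivalence between being relatively universal and coinciding with the universal cover, which rests on the uniqueness and covering property of $\tneps$; everything else is a direct substitution of the identifications just made.
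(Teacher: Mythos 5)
Your proposal is correct and follows essentially the same route as the paper, which presents the corollary as an immediate consequence of Theorem~\ref{main theorem} combined with the Appendix~\ref{appendix B} characterization of when $\Der(\KK)=\{0\}$. You merely make explicit the intermediate steps (that $\neps$ is relatively universal iff $\neps\cong\tneps$ iff $M=\{0\}$ iff $\Omega=\emptyset$ iff $\Der(\KK)=\{0\}$) which the paper leaves to the reader with its single word ``Therefore.''
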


\begin{remark}\label{rem1}
\em
An explicit description of $\widehat{A}$ is missing in \cite{SV} even for $n = 2$, which is the case that paper is devoted to. The authors of \cite{SV} prove that, when $n = 2$, the $G$-module $\widehat{A}$ is an extension of $A$ by a trivial $G$-module $M$ and $M^*\cong H^1(A^*,G)$ ($\cong \Der(\KK)$), but they neither describe $M$ when $\dim(M^*)$ is infinite nor explain how $G$ acts on $\widehat{A}$. All they say on this action is a formula which  follows from formula (2.5) of \cite{SV} and should describe the action of $G$ on $\widehat{A}/H$ for a generic hyperplane $H$ of $M$. However that formula is incorrect (it even fails to define an action of $G$). Indeed both in that formula and in (2.5) the authors wrongly write $g^{-1}\cdot d(g)$ instead of $g\cdot d(g^{-1})$ (compare our formula \eqref{ovA}). The correction of that error has been the  starting point of our paper.
\end{remark}

\noindent
{\bf Organization of the paper.} In Section 2 we prepare the tools to be used in the proof of Lemma \ref{main lemma} and Theorem \ref{main theorem}. In the first part of the section we recall some generalities on extensions of modules for a given group $G$. In the second part, given a geometry $\Gamma$ admitting an action of $G$ as a subgroup of $\mathrm{Aut}(\Gamma)$ and a $G$-module $V$ hosting a $G$-homogeneous embedding $\eps$ of $\Gamma$, properties of the extensions of $V$ which host a cover of $\eps$ are discussed. Section 3 contains the proofs of Lemma \ref{main lemma} and Theorem \ref{main theorem}. We close our paper with three appendices. In the first appendix we recall a construction of the relatively universal cover of a given embedding, due to Ronan \cite{Ronan}. In the second one we give all information on $\Der(\KK)$ to be used in this paper and in the third one we offer an explicit description of the isomorphism $\Der(\KK)\cong H^1(A^*,G)$.

\section{Module extensions and projective embeddings}\label{Sec 2}

Throughout this section $G$ is a given group and $\KK$ a given field. A right (left) $G$-module is a pair $(V,\rho)$ where $V$ is a $\KK$-vector space and $\rho$ is a homomorphism from $G$ to the group of invertible linear transformations of $V$ such that $\rho(g_1g_2)(v) = \rho(g_2)(\rho(g_1)(v))$ (respectively $\rho(g_1)(\rho(g_2)(v))$) for any two elements $g_1, g_2\in G$ and every $v\in V$. By a little abuse, if a homomorphism $\rho$ as above is given we say that $V$ is a $G$-module, for short. If $V$ is a right (left) $G$-module we write $v\cdot g$ (respectively $g\cdot v$) for $\rho(g)(v)$, for every $g\in G$ and $v\in V$. We extend this notation to subspaces of $V$ and subgroups of $G$ in a natural way. Explicitly, let $V$ be a right $G$-module, $X$ and $v$
respectively a subset and a vector of $V$ and $F$ and $g$ a subset and an element of $G$. Then we put $X\cdot g : = \{v\cdot g\}_{v\in X}$, $vF := \{vg\}_{g\in F}$ and $X\cdot F = \cup_{v\in X}v\cdot F = \cup_{g\in F}X\cdot g$. In particular, $v\cdot G$ is the  \emph{orbit} of $v$ under $G$ and if $X\cdot G = X$ then $G$ is said to  \emph{stabilize} $X$. We will also take the liberty of writing $g(v)$, $g(X)$, $F(v)$ and $F(X)$ instead of  $v\cdot g$, $X\cdot g$, $v\cdot F$ and $X\cdot F$ respectively, but only when this free notation can make our formula easier to read.

All modules to be considered in the sequel are right $G$-modules. In particular, given a (right) $G$-module $V$, the action of $G$ on the dual $V^*$ of $V$ is defined as follows:
\begin{equation}\label{GsuV*}
\xi\cdot g ~:~ x\in V ~ \longrightarrow ~ \xi(x\cdot g^{-1}), \hspace{5 mm} \forall g\in G, ~\forall \xi \in V^*.
\end{equation}
In short, $\xi\cdot g = \xi\circ g^{-1}$, where $\circ$ stands for composition of mappings. According to this definition, we have $\xi\cdot(g_1g_2) = (\xi\cdot g_1)\cdot g_2$ for every choice of $\xi\in V^*$ and $g_1, g_2\in G$. So, given a right $G$-module structure on $V$, definition \eqref{GsuV*} indeed makes $V^*$ a right $G$-module.

\begin{remark}
\em
Let $V = V(n,\KK)$, the vectors of $V$ and those of $V^*$ being regarded as $1\times n$ matrices and $n\times 1$ matrices respectively. Let $G$ be a group of $n\times n$ matrices acting on $V$ (on the right) in the natural way: $x\cdot g$ is just the product of the matrix $x$ times the matrix $g$. According to \eqref{GsuV*}, the $g$-image $\xi\cdot g$ of a linear functional $\xi\in V^*$ is the product $g^{-1}\xi$ of the inverse of the matrix $g$ times the matrix $\xi$. As the matrix $g$ is replaced by its inverse, the group $G$ indeed acts on the right on $V^*,$ even if $g^{-1}$ occurs on the left in the matrix product $g^{-1}\xi$.
\end{remark}

\subsection{Generalities on extensions of $G$-modules}

Given a $G$-module $U$, a  \emph{submodule} of $U$ is a subspace of $U$ stabilized by $G$ in its action on $U$. If $W$ is a submodule of $U$ then an action of $G$ is naturally defined on the quotient $U/W$ as follows: $(u+ W)\cdot g := u\cdot g + W$ for every coset $u+W$ of $W$ and every $g\in G$.

Given another $G$-module $V$, the module $U$ is an \emph{extension} of $V$ if $U$ admits a submodule $W$, called the  \emph{kernel} of the extension, such that $V\cong U/W$ (isomorphism of $G$-modules). We write $U = W\cdot V$ to mean that $U$ is an extension of $V$ with kernel $W$.

If $\dim(W)=d$ we say that $U$ is a $d$-\emph{extension} of $V$. In particular, if $d = 1$ then $U$ is a $1$-\emph{extension} of $V$.
In case $W=\{0\}$ the extension $U$ is \emph{trivial}.

We say that a non-trivial extension $U = W\cdot V$  \emph{splits} if there is an injective linear mapping $\phi:V\to U$ such that $\phi(V)$ is stabilized by $G$ and $\pi\circ\phi= \mathrm{id}_V$, where $\pi$ is the canonical projection of $U$ onto $U/W \cong V$ as vector spaces. If this is the case then $U$ is the direct sum of $W$ and $\phi(V)$ (direct sum of $G$-modules, of course); with a little abuse, $U = W\oplus V$ (equivalently $U = W\times V$ as only two summands are involved). An extension $U$ is said to be  \emph{non-split} if it does not split.

Given a subspace $W_0$ of $W$ stabilized by $G$, the quotient $U/W_0$ is again an extension of $V$ called a  \emph{quotient} of $U$ (a  \emph{proper quotient} if $W_0 \subsetneq W$).

\begin{definition}\label{def0}
\em
We say that the extension $U = W\cdot V$ of $V$ is  \emph{totally non-split} if every proper quotient of $U$ is non-split as an extension of $V.$
\end{definition}

The following definitions will also be used in this paper.

\begin{definition}\label{def1}
\em
Let $U=W\cdot V$ be an extension of $V.$ We say that the extension $U$ is
\begin{itemize}
\item \emph{central} if $G$ acts trivially on the kernel $W$ of $U$,
\item $1$-\emph{complete} if $U$ admits no non-split $1$-extension,
\item \emph{$1$-universal} if every non-split $1$-extension of $V$ is isomorphic to a quotient of $U$ over a hyperplane of $W$ stabilized by $G$,
\item $1$-\emph{non-split} if $U/H$ is a non-split extension of $V$ for every hyperplane $H$ of $W$ stabilized by $G$.
  \end{itemize}
\end{definition}

\begin{remark}\label{Gperf-1-et}
\em
If a group acts on a $1$-dimensional $\KK$-vector space, then it
necessarily acts on it as a subgroup of the multiplicative group of $\KK$. Consequently, the trivial action is the unique action of a perfect group on a $1$-dimensional vector space. So, if $G$ is perfect then all $1$-extensions of
a $G$-module $V$ are central.
\end{remark}

Let $U=W\cdot V$ be an extension of $V$ and suppose $X$ is a complement of $W$ in the vector space $U$. The elements of $V$ are then cosets of $W$, so for every $v\in V$ the intersection $v_X:=v\cap X$ is an element of $X.$
The map $\iota_X \colon V \rightarrow X,\, \iota_X(v)= v_X$ is a vector space  isomorphism.

For every $g\in G$, we define an action $g_X$ of $g$ on $X$ as follows:
\begin{equation}\label{gX}
g_X ~\colon ~ x\in X ~\longrightarrow ~((x+W)\cdot g)_X.
\end{equation}
Since $(x+W)\cdot g  = x\cdot g + W$ (with $x\cdot g$ computed in $U$) and $w_X =w\cap X= 0$ for every $w\in W$, this action is well defined. According to \eqref{gX}, the vector space $X$ can be regarded as a $G$-module and $\iota_X$ is a module isomorphism.

If $X$ is not stabilized by $G$ in the action of $G$ on $U$, the subspace $X$ is not a submodule of $U.$ However $v_X\cdot g -  (v\cdot g)_X \in W$ for every $v\in V$ and $g\in G$. We put
\begin{equation}\label{wX}
w_X(v,g) ~ :=  ~ v_X\cdot g-(v\cdot g)_X
\end{equation}
and  define $W_X$ as the subspace of $W$ spanned by the elements $w_X(v,g)$'s for $v\in V$ and $g\in G$. The group $G$ acts as follows on $W_X:$
\begin{equation}\label{cociclo}
	w_X(v,g_1)\cdot g_2 ~ = ~ w_X(v,g_1g_2)-w_X(vg_1,g_2).
\end{equation}
Also, according to \eqref{wX}
\begin{equation}\label{pre cociclo}
w_X(v,1) ~ = ~  v_X - v_X = 0
\end{equation}
for every $v\in V$. (Note that the  equality $w_X(v,1) = 0$ can also be deduced formally from \eqref{cociclo}, simply putting $g_1 = 1$ in that formula).  As a consequence of equation~\eqref{cociclo}, we have the following.

\begin{lemma}\label{WX}
The group $G$ stabilizes the subspace $W_X$ of $W$.
\end{lemma}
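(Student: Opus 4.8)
The plan is to reduce the claim to a direct application of the cocycle identity \eqref{cociclo}. Since $W$ is a $G$-submodule of $U$, the group $G$ acts on $W$ by invertible linear transformations; consequently, if each spanning element $w_X(v,g_1)$ of $W_X$ is sent into $W_X$ by every $g_2 \in G$, then by linearity the whole of $W_X$ is sent into $W_X$. Thus the task reduces to verifying the inclusion $w_X(v,g_1)\cdot g_2 \in W_X$ for all $v \in V$ and all $g_1, g_2 \in G$.

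First I would invoke \eqref{cociclo}, which rewrites $w_X(v,g_1)\cdot g_2$ as the difference $w_X(v, g_1 g_2) - w_X(vg_1, g_2)$. The crucial observation is that both summands on the right-hand side are themselves among the generators of $W_X$: the first is $w_X(v',g')$ with $v' = v \in V$ and $g' = g_1 g_2 \in G$, while the second is $w_X(v'',g'')$ with $v'' = vg_1 \in V$ and $g'' = g_2 \in G$. Hence the right-hand side, being a $\KK$-linear combination of generators of $W_X$, lies in $W_X$. This establishes $W_X \cdot g_2 \subseteq W_X$ for every $g_2 \in G$.

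To upgrade this one-sided inclusion to genuine stabilization, I would apply the inclusion just proved with $g_2^{-1}$ in place of $g_2$, obtaining $W_X \cdot g_2^{-1} \subseteq W_X$, that is, $W_X \subseteq W_X \cdot g_2$. Combining the two inclusions yields $W_X \cdot g_2 = W_X$ for every $g_2 \in G$, which is precisely the assertion that $G$ stabilizes $W_X$.

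There is no genuine obstacle once \eqref{cociclo} is in hand: the entire content of the lemma is already packaged into that cocycle relation, and what remains is a short formal manipulation. The only points demanding a little care are that the generating set of $W_X$ is closed, up to sign and relabelling of the arguments $v$ and $g$, under the $G$-action, and that one must pass from $W_X \cdot g \subseteq W_X$ to the equality by using the inverse element, since stabilization is a two-sided condition.
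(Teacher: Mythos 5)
Your proof is correct and follows exactly the route the paper intends: the lemma is stated there as an immediate consequence of \eqref{cociclo}, since that identity exhibits $w_X(v,g_1)\cdot g_2$ as a $\KK$-linear combination of generators of $W_X$. The extra step of passing from $W_X\cdot g\subseteq W_X$ to equality via $g^{-1}$ is a sound (and slightly more careful) finishing touch that the paper leaves implicit.
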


The subspace $X$ is stabilized by $G$ if and only if $W_X=\{0\}.$ So, $U$ is non-split  if and only if $W_X\not=\{0\}$ for every complement $X$ of $W$. Combining these remarks with Lemma \ref{WX} we immediately obtain the following.

\begin{proposition}
The extension $U=W\cdot V$ is totally non-split if and only if $W_X=W$ for every complement $X$ of $W$ in $U.$
\end{proposition}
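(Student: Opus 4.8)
The plan is to reduce the splitting behaviour of every proper quotient to a single condition on the subspaces $W_X$. By Definition~\ref{def0}, the extension $U = W\cdot V$ is totally non-split precisely when $U/W_0$ is a non-split extension of $V$ for every $G$-submodule $W_0\subsetneq W$. The central intermediate claim I would establish is the following: for a $G$-submodule $W_0\subseteq W$, the quotient $U/W_0$ splits if and only if there is a complement $X$ of $W$ in $U$ with $W_X\subseteq W_0$. Once this is available, the proposition follows quickly, because $W_X$ is itself a $G$-submodule of $W$ by Lemma~\ref{WX}.

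To prove the claim I would compare complements of $W$ in $U$ with complements of $W/W_0$ in $U/W_0$ through the canonical projection $\pi_0\colon U\to U/W_0$. First I would check that $X\mapsto \overline{X}:=\pi_0(X)$ carries each complement $X$ of $W$ to a complement $\overline{X}$ of $W/W_0$, and that this map is surjective: given a complement $\overline{X}$ of $W/W_0$, its full preimage $\widetilde{X}=\pi_0^{-1}(\overline{X})$ satisfies $\widetilde{X}\cap W=W_0$ and $\widetilde{X}+W=U$, so any complement $X$ of $W_0$ inside $\widetilde{X}$ is a complement of $W$ in $U$ with $\pi_0(X)=\overline{X}$. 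The key computation is then that the quantity of~\eqref{wX} passes to the quotient as $w_{\overline{X}}(\overline{v},g)=\overline{w_X(v,g)}$, since $\overline{v_X}\in\overline{X}$ is the representative of $v$ in $\overline{X}$; consequently $(W/W_0)_{\overline{X}}=(W_X+W_0)/W_0$. Combining this identity with the already-recorded equivalence ``$\overline{X}$ is stabilised by $G$ if and only if $(W/W_0)_{\overline{X}}=\{0\}$'' shows that $\overline{X}$ is $G$-stable exactly when $W_X\subseteq W_0$. Since every complement of $W/W_0$ in $U/W_0$ arises as $\pi_0(X)$ for some complement $X$ of $W$, it follows that $U/W_0$ splits if and only if some complement $X$ satisfies $W_X\subseteq W_0$, which is the claim.

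With the claim in hand I would finish as follows. For the forward implication, suppose $U$ is totally non-split but some complement $X$ has $W_X\subsetneq W$; taking $W_0=W_X$ (a proper $G$-submodule by Lemma~\ref{WX}) the claim gives that $U/W_X$ splits, contradicting Definition~\ref{def0}. Hence $W_X=W$ for every complement $X$. Conversely, if $W_X=W$ for every complement $X$, then for any proper $G$-submodule $W_0\subsetneq W$ there is no complement $X$ with $W_X\subseteq W_0$, so by the claim $U/W_0$ is non-split; as this holds for all such $W_0$, the extension is totally non-split.

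I expect the main obstacle to be the bookkeeping in the claim rather than any deep idea: one must verify carefully that $\pi_0(X)$ is genuinely a complement, that the lifting step produces a complement of $W$ (not merely of $W_0$), and that the representative of $v$ in $\overline{X}$ is exactly $\overline{v_X}$, so that the identity $w_{\overline{X}}(\overline{v},g)=\overline{w_X(v,g)}$ and hence $(W/W_0)_{\overline{X}}=(W_X+W_0)/W_0$ hold. Everything else is a direct translation through $\pi_0$, and the $G$-submodule property of $W_X$ supplied by Lemma~\ref{WX} is what lets the candidate kernel $W_0=W_X$ be used in the forward direction.
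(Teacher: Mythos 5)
Your proof is correct and follows essentially the same route the paper intends: the paper records that a complement $X$ is $G$-stable iff $W_X=\{0\}$ and then declares the proposition ``immediate,'' the implicit step being exactly your identity $(W/W_0)_{\overline{X}}=(W_X+W_0)/W_0$ applied to each proper quotient. You have simply written out the quotient bookkeeping (that $\pi_0$ gives a surjection on complements and that $w_{\overline{X}}(\overline{v},g)=\overline{w_X(v,g)}$) that the paper leaves to the reader, and your use of Lemma~\ref{WX} to make $W_0=W_X$ an admissible kernel in the forward direction is the same ingredient the paper cites.
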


\begin{proposition}\label{1-ext}
 A central extension $U=W\cdot V$ of $V$ is totally non-split if and only if it is $1$-non-split.
\end{proposition}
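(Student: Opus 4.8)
The plan is to prove the two implications separately, exploiting in both directions the single structural consequence of centrality: since $G$ acts trivially on $W$, \emph{every} subspace of $W$ — in particular every hyperplane of $W$ — is automatically a $G$-submodule, so that the associated quotient of $U$ is a legitimate extension of $V$ in the sense preceding Definition~\ref{def0}. This is exactly what makes the notions of Definition~\ref{def1} interact cleanly, and it is why the hypothesis of centrality cannot be dropped.

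For the implication ``totally non-split $\Rightarrow$ $1$-non-split'' I would argue directly. Let $H$ be any hyperplane of $W$. By centrality $H$ is stabilized by $G$, and $H \subsetneq W$, so $U/H$ is a \emph{proper quotient} of $U$. By hypothesis every proper quotient of $U$ is non-split, whence $U/H$ is non-split; as $H$ was arbitrary, $U$ is $1$-non-split.

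For the converse ``$1$-non-split $\Rightarrow$ totally non-split'' I would argue by contraposition, the engine being that splitting is inherited by further quotients. Suppose $U$ is not totally non-split; then some proper quotient $U/W_0$, with $W_0 \subsetneq W$ a submodule, splits, say $U/W_0 = (W/W_0) \oplus \phi(V)$ with $\phi(V)$ a $G$-stabilized complement projecting isomorphically onto $V$. Since $W/W_0 \neq \{0\}$, I choose a hyperplane of $W/W_0$ (which exists even when $W$ is infinite-dimensional, being the kernel of a nonzero functional); its preimage $H$ in $W$ is then a hyperplane of $W$ containing $W_0$, and $H$ is $G$-stabilized by centrality. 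Passing to the further quotient map $q\colon U/W_0 \to U/H \cong (U/W_0)/(H/W_0)$, whose kernel $H/W_0$ lies inside $W/W_0$, I would check that $q$ carries $\phi(V)$ isomorphically onto a $G$-stabilized complement of $W/H$ in $U/H$: injectivity follows from $\phi(V)\cap(H/W_0) \subseteq \phi(V)\cap(W/W_0) = \{0\}$, $G$-invariance is preserved because $q$ is a morphism of $G$-modules, and a direct check shows the sum $(W/H) + q(\phi(V))$ is direct and exhausts $U/H$. Hence $U/H$ splits, contradicting $1$-non-splitness.

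The only point that needs genuine care — the modest main obstacle — is verifying that $q(\phi(V))$ really is a $G$-stabilized complement of $W/H$ in $U/H$, i.e.\ that quotienting by $H/W_0 \subseteq W/W_0$ destroys neither the directness of the decomposition nor the $G$-invariance of the complement. Both reduce to the containment $H/W_0 \subseteq W/W_0$ together with $q$ being a $G$-module homomorphism, using that $q^{-1}(W/H) = W/W_0$; everything else is routine bookkeeping. The existence of a hyperplane $H \supseteq W_0$ and the use of centrality to guarantee its $G$-invariance are the two places where I would be fully explicit.
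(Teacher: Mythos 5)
Your proposal is correct and follows essentially the same route as the paper: the forward direction is immediate from centrality making every hyperplane of $W$ a $G$-submodule, and the converse is proved by contraposition, passing from a splitting of $U/W_0$ to a splitting of $U/H$ for a hyperplane $H\supseteq W_0$. The extra verification you supply for $q(\phi(V))$ being a $G$-stabilized complement in $U/H$ is a careful spelling-out of what the paper treats as immediate.
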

\begin{proof}
The `only if' part of the statement is trivial. We prove the `if' part by contraposition. Suppose there exists a proper subspace $W_0$ of $W$ such that $U/W_0$ splits as $U/W_0=W/W_0\times X$ where $X\cong V.$ So, if $H$ is a hyperplane of $W$ containing $W_0$ (necessarily stabilized by $G$ because $G$ acts trivially on $W$ since $U$ is a central extension by hypothesis) then the extension  $U/H=W/H \cdot V$ splits as $U/H=W/H\times X$.
\end{proof}

\subsection{Central $1$-extensions}
As Smith and Volklein recall in~\cite[pag. 134]{SV} the central $1$-extensions of a $G$-module $V$ correspond in a standard way to the elements of the first cohomology group $H^1(V^*,G)$ of $G$ over the dual $V^* $ of $V$. In this subsection we shall discuss this claim. We shall also recall its proof, since in this case the proof is more enlightening than the statement itself. We are not going to recall basics on cohomology of groups. We refer to Hall \cite[chapter 15]{Hall} for them. Of course, many more expositions are available on this matter, but \cite{Hall} is enough for our needs.

\subsubsection{Preliminaries}

Let $U = W\cdot V$ be a central extension of $V$
and $X$ be a complement of $W$ in $U$.
As $G$ acts trivially on $W$ equation~\eqref{cociclo} yields
\begin{equation}\label{wXgv}
	w_X(v,g_1)=w_X(v,g_1g_2)-w_X(vg_1,g_2), ~~~ \forall g_1, g_2\in G, ~\forall v\in V.
\end{equation}
For every $v\in V$ define $w^*_X(v,\blank):G\rightarrow W$ as follows:
 \begin{equation}\label{cociclo bis}
w^*_X(v,g)~ := ~ w_X(v,g^{-1}), ~~~ \forall g\in G.
\end{equation}
With this notation, we can rewrite \eqref{cociclo} as follows:
\[w^*_X(v, g_2^{-1}g_1^{-1}) ~ = ~  w^*_X(vg_1, g_2^{-1}) + w^*_X(v, g_1^{-1}), ~~ \forall g_1, g_2\in G; ~ \forall v\in V.\]
Replacing $g_1^{-1}$ with $f_1$ and $g_2^{-1}$ with $f_2$ and next changing our notation again, renaming $f_1$ and $f_2$ as $g_2$ and $g_1$ respectively, the above yields
\begin{equation}\label{cociclo ter}
w^*_X(v, g_1g_2) ~ = ~ w^*_X(vg_2^{-1}, g_1) + w^*_X(v, g_2).
\end{equation}
Given $g\in G$, we can regard $w^*_X(\blank ,g)$ as a linear mapping $f_X(g):v\rightarrow w^*_X(v,g)$ from $V$ to $W$ (in fact to $W_X$). Accordingly, $w^*_X(\blank, \blank )$ can be regarded as the mapping $f_X$ from $G$ to the space $L(V, W)$ of linear mappings from $V$ to $W$ which maps every $g\in G$ onto $f_X(g)$. With this notation, the mapping $w^*_X((\blank)g_2^{-1}, g_1) : v \rightarrow w^*_X(vg_2^{-1}, g_1)$ is the same as $f_X(g_1)\cdot g_2$. (Recall that if $f$ is a mapping from a $G$-module $V$ to a set $S$ and $g\in G$ then $f\cdot g = f\circ g^{-1}$; compare \eqref{GsuV*}). With this notation we can rewrite \eqref{cociclo ter} as follows
\begin{equation}\label{cociclo quater}
f_X(g_1g_2) ~ = ~ f_X(g_1)\cdot g_2 + f_X(g_2).
\end{equation}
Note that $f_X(1)(v) = 0$ for every $v\in V$. Indeed $f_X(1)(v) = w^*_X(v,1) = w_X(v,1)$ and $w_X(v,1) =  0$ by \eqref{pre cociclo}. Hence $f_X(1) = 0$, namely $f_X$ is a $1$-cochain of $G$ over $L(V,W)$. Formula \eqref{cociclo quater} now makes it clear that $f_X$ is a $1$-cocycle of $G$ over $L(V,W)$, i.e. $f_X\in Z^1(L(V,W), G).$

Let now $\alpha\in W^*$ be a linear functional of $W$ and, with $f_X$ as above, for every $g\in G$ put
\begin{equation}\label{alphaX}
\alpha_X(g) ~ :=~   \alpha\circ f_X(g) =  \alpha\circ w_X(\blank , g^{-1}).
\end{equation}
So, $\alpha_X(g)$ is the linear functional of $V^*$ which maps every $v\in V$ onto
$\alpha(w_X(v,g^{-1}))$. As $f_X\in Z^1(L(V,W), G)$, the map $\alpha_X$ is a $1$-cocycle of $G$ over the dual $V^*$ of $V$, i.e. $\alpha_X\in Z^1(V^*, G)$. Explicitly, $\alpha_X(1) = 0$ and
\[\alpha_X(g_1g_2) ~= ~ \alpha_X(g_1)\cdot g_2 +\alpha_X(g_2), ~~~ \forall g_1, g_2 \in G.\]

\subsubsection{Central $1$-extensions of $V$ and the group $H^1(V^*,G)$}

We are now ready to prove the main theorem of this subsection. In order to state it properly, we need some conventions and a definition.
Given a $G$-module $V$, every central $1$-extension of $V$ can be realized in $\KK\times V$ by choosing a function $\phi: V\times G\rightarrow \KK$ satisfying the following property
\begin{equation}\label{Phi(v,g)}
\phi(v, g_1g_2) ~ = ~ \phi(v, g_1) + \phi(vg_1, g_2), ~~~ \forall v\in V, ~ \forall g_1, g_2\in G
\end{equation}
(which is nothing but \eqref{wXgv} where $w_X$ is now called $\phi$) and defining the action of $G$ on $\KK\times V$ as follows:
\begin{equation}\label{1-ext-1}
g ~ : ~ (t,v)\in \KK\times V ~ \rightarrow ~ (t+\phi(v,g), v\cdot g), ~~~ \forall g\in G, ~ \forall v\in V.
\end{equation}
Note that $\phi(v,1) = 0$ for every $v\in V$, as one can see but putting $g_1 = 1$ in \eqref{Phi(v,g)}.

We denote by $U_\phi$ the extension of $V$ realized in this way. In the theorem we are going to state only $1$-extensions of $V$ realized on $\KK\times V$ as explained above are considered.

Given two extensions $U = W\cdot V$ and $U' = W'\cdot V$ of the same $G$-module $V$, an  \emph{isomorphism} from the extension $U$ to the extension $U'$ is just an isomorphism of $G$-modules from $U$ to $U'$ which maps the kernel $W$ of $U$ onto the kernel $W'$ of $U'$.  However, this notion of isomorphism is unsuitable for the next theorem. The notion we need in it is the following one.

Let $U = W\cdot V$ and $U' =  W'\cdot V$ be two extensions of $V$ with the same underlying vector-space structure, namely $U = U'$ and $W = W'$ as vector spaces. We say that an isomorphism of extensions from $U$ to $U'$ is  \emph{rigid} (with respect to the given underlying vector-space structure) if it induces the identity mapping on both $W$ and $U/W$.

Explicitly, if $U = W\cdot V$ and $U' = W'\cdot V$ have the same vector space structure, say  $U = U' = W\times X$ as vector spaces for a given copy $X$ of the $G$-module $V$, and $\psi:U\rightarrow U'$ is a rigid isomorphism from $U$ to $U'$, then there exists a linear mapping $\lambda:X\rightarrow W$ such that $\psi$ maps every vector $(w,v)\in W\times X$ onto $(w+\lambda(v), v)$. Of course, for $\psi$ to be an isomorphism of $G$-modules the mapping $\lambda$ must satisfy the following condition:
\begin{equation}\label{rigid}
\lambda(v\cdot g) - \lambda(v)\cdot g ~ = ~ w'_X(v, g) - w_X(v, g),~~~ \forall g\in G, ~ \forall v\in V,
\end{equation}
with $w_X(v,g)$ and $w'_X(v,g)$ defined according to \eqref{wX} in $U$ and $U'$ respectively.

\begin{theorem}\label{thm link}
  The rigid isomorphism classes of the central $1$-extensions of $V$ bijectively correspond to the elements of $H^1(V^*,G)$. In particular, all split $1$-extensions of $V$ are pairwise rigidly isomorphic and their
rigid isomorphism class corresponds to the null element of $H^1(V^*,G)$.
\end{theorem}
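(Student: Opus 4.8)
The plan is to make the correspondence explicit by attaching to each realized central $1$-extension $U_\phi$ a $1$-cocycle in $Z^1(V^*,G)$, and then to translate the rigidity relation \eqref{rigid} into the statement that two cocycles are cohomologous. First I would record that, since the action \eqref{1-ext-1} must be $\KK$-linear on $\KK\times V$ for each fixed $g$, the map $\phi(\blank,g)\colon V\to\KK$ is a linear functional, i.e. $\phi(\blank,g)\in V^*$. Following the computation of the preliminaries (with $w_X$ renamed $\phi$), I would set $z_\phi(g):=\phi(\blank,g^{-1})\in V^*$ and check, using \eqref{Phi(v,g)} together with the convention \eqref{GsuV*} for the $G$-action on $V^*$, that $z_\phi(1)=0$ and $z_\phi(g_1g_2)=z_\phi(g_1)\cdot g_2+z_\phi(g_2)$; this is exactly \eqref{cociclo quater} specialized to the present $1$-dimensional kernel, so $z_\phi\in Z^1(V^*,G)$. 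Conversely, given any $z\in Z^1(V^*,G)$, defining $\phi(v,g):=z(g^{-1})(v)$ produces a function satisfying \eqref{Phi(v,g)}; the two assignments are mutually inverse, so $\phi\mapsto z_\phi$ is a bijection from the set of realized central $1$-extensions of $V$ onto $Z^1(V^*,G)$.

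Next I would pass to rigid isomorphisms. Two realized extensions $U_\phi$ and $U_{\phi'}$ share the underlying space $\KK\times V$, with $W=\KK\times\{0\}$ and complement $X=\{0\}\times V$; by \eqref{rigid}, a rigid isomorphism between them corresponds to a $\lambda\in V^*$ with $\lambda(v\cdot g)-\lambda(v)\cdot g=\phi'(v,g)-\phi(v,g)$ for all $v,g$. Since the extensions are central, $G$ acts trivially on $W$, whence $\lambda(v)\cdot g=\lambda(v)$ and the condition reads $\phi'(v,g)-\phi(v,g)=\lambda(v\cdot g)-\lambda(v)$. Rewriting the right-hand side via \eqref{GsuV*} as $(\lambda\cdot g^{-1}-\lambda)(v)$ and substituting $g\mapsto g^{-1}$, I would obtain $(z_{\phi'}-z_\phi)(g)=\lambda\cdot g-\lambda$ as functions of $g$, i.e. $z_{\phi'}-z_\phi\in B^1(V^*,G)$. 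Hence $U_\phi$ and $U_{\phi'}$ are rigidly isomorphic if and only if $z_\phi$ and $z_{\phi'}$ are cohomologous, so the bijection of the previous step descends to a bijection between rigid isomorphism classes of central $1$-extensions of $V$ and $H^1(V^*,G)=Z^1(V^*,G)/B^1(V^*,G)$.

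For the split case I would observe that $U_\phi$ splits precisely when $W$ admits a $G$-invariant complement, which is necessarily a graph $\{(\ell(v),v):v\in V\}$ for some $\ell\in V^*$; $G$-invariance forces $\phi(v,g)=\ell(v\cdot g)-\ell(v)$, which by the computation above is exactly the condition $z_\phi\in B^1(V^*,G)$, equivalently that $U_\phi$ is rigidly isomorphic to the direct-sum extension $U_0$ (the case $\phi\equiv 0$, $z_0=0$). Thus all split $1$-extensions form a single rigid isomorphism class, corresponding to the zero element of $H^1(V^*,G)$.

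The main obstacle I anticipate is book-keeping rather than conceptual: keeping the inverse twist $g\mapsto g^{-1}$ between $\phi$ and $z_\phi$ consistent throughout, and correctly matching the right-module coboundary convention $g\mapsto\lambda\cdot g-\lambda$ against the rigidity relation \eqref{rigid}, since a misplaced sign or inverse would spoil the identification of coboundaries with rigidly trivial isomorphisms. It is worth flagging that centrality is precisely what collapses the term $\lambda(v)\cdot g$ to $\lambda(v)$; this is the one place where the argument genuinely uses that the kernel is a trivial $G$-module.
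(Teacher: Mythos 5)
Your proposal is correct and follows essentially the same route as the paper's proof: your $z_\phi(g)=\phi(\blank,g^{-1})$ is exactly the paper's cocycle $f_\phi$, the translation of the rigidity condition \eqref{rigid} into $z_{\phi'}-z_\phi\in B^1(V^*,G)$ matches the paper's computation \eqref{iso2}, and the identification of split extensions with coboundaries via $G$-invariant graphs $\{(\ell(v),v)\}$ is the same argument. The only (harmless) addition is your explicit remark that linearity of the $g$-action forces $\phi(\blank,g)\in V^*$, which the paper leaves implicit.
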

\begin{proof}
Let $U_\phi$ be the (central) $1$-extension of $V$ defined by a mapping $\phi$ as previously explained and let $f_\phi:G\rightarrow V^*$ be the function which maps every $g\in G$ onto the linear functional $f_\phi(g) : v\in V \rightarrow \phi(v, g^{-1})$. Property~\eqref{Phi(v,g)} on $\phi$ implies that $\phi(v,1) = 0$ for every $v\in V$. Hence $f_\phi(1) = 0$ and $f_\phi$ is a $1$-cocycle of $G$ over $V^*$, namely $f_\phi(1) = 0$ and
\begin{equation}\label{basic formula}
f_\phi(g_1g_2) ~ = ~ f_\phi(g_1)\cdot g_2 + f_\phi(g_2), ~~~ \forall g_1, g_2\in G.
\end{equation}
Conversely, for every cocycle $f\in Z^1(V^*, G)$ the mapping $\phi_f:V\times G\rightarrow \KK$ defined by the clause $\phi_f(v, g) := f(g^{-1})(v)$ yields a central $1$-extension $U_f$ of $V$. Of course, $U_{f_\phi} = U_\phi$ and $U_{\phi_f} = U_f$.

The extension $U_\phi$ splits if and only if there exists a linear functional $\lambda\in V^*$ such that the subspace $V_\lambda = \{(\lambda(v), v)\}_{v\in V}$ is stabilized by $G$ in its action on $\KK\times V$ defined as in \eqref{1-ext-1}. This amounts to $\lambda(v) + \phi(v,g) = \lambda(v\cdot g)$, which in turn is equivalent to $f_\phi(g) = \lambda\cdot g - \lambda$. So, $U_\phi$ splits  if and only if $f_\phi$ is $1$-coboundary. The above also shows that, conversely, if $f\in B^1(V^*, G)$ is a $1$-coboundary then $U_f$ splits.

So far we have proved that the central $1$-extensions defined on $\KK\times V$ by a clause like~\eqref{1-ext-1} bijectively correspond to the elements of $Z^1(V^*,G)$, the split ones corresponding to the elements of $B^1(V^*,G)$. In order to finish the proof it remains to prove that, with $U = U_\phi$, $U' = U_{\phi'}$, $f = f_\phi$ and $f' = f_{\phi'}$, the extensions $U$ and $U'$ are rigidly isomorphic if and only if $f'-f \in B^1(V^*,G)$.

Let $\psi$ be a rigid isomorphism from $U$ to $U'$. Then there exists a linear functional $\lambda\in V^*$ such that $\psi$ maps $(t,v)\in U$ onto $(t+\lambda(v), v)\in U'$. In the present setting, Condition~\eqref{rigid} amounts to the following
\begin{equation}\label{iso2}
\phi(v,g) + \lambda(v\cdot g)  ~ = ~ \lambda(v) + \phi'(v,g), ~~~ \forall v\in V, ~ \forall g\in G.
\end{equation}
Hence $\phi'(v,g) - \phi(v,g) = \lambda(v\cdot g) - \lambda(v)$. Equivalently, $f'(g^{-1})(v) - f(g^{-1})(v) = (\lambda\cdot g^{-1})(v) - \lambda(v)$ for every $v\in V$ and every $g\in G$. In short, $f' - f$ is the same as the coboundary $g \rightarrow \lambda\cdot g - \lambda$. Hence $f$ and $f'$ represent the same element of $H^1(V^*,G)$.

Conversely let $f' = f + \beta$ for $\beta \in B^1(V^*,G)$, say $\beta(g) = \lambda - \lambda\cdot g$ for every $g\in G$ and a suitable $\lambda\in V^*$. Then $U$ and $U'$ are rigidly isomorphic, a rigid isomorphism $\psi:U\rightarrow U'$ being defined as follows: $\psi(t,v) = (t- \lambda(v), v)$.
\end{proof}

\begin{remark}
\em
As noticed in Remark \ref{rem1}, formula (2.5) of \cite{SV} and the formula which immediately follows from it are incorrect. Indeed in those two formulas Smith and V\"{o}lklein inadvertedly put $f_\phi(g)(v)$ equal to $\phi(v, g)$ instead of $\phi(v, g^{-1})$.
\end{remark}

Keeping the assumption that $U$ and $U'$ have the same vector-space structure,  \emph{semi-rigid} isomorphisms from $U$ to $U'$ can also be considered. We define them by dropping the requirement to induce the identity on $W$ but keeping the hypothesis that the identity is induced on $U/W$.

Recall that $H^1(V^*,G)$ is a vector space over the same field $\KK$ as $V$.  So, we can consider the projective geometry $\PG(H^1(V^*, G))$ of the linear subspaces of $H^1(V^*, G)$. Keeping the setting implicit in Theorem \ref{thm link} for central $1$-extensions of $V$, consider semi-rigid isomorphisms of central $1$-extensions of $V$. In this setting, a semi-rigid isomorphism is the composition of a rigid isomorphism with a rescaling $t \rightarrow kt$ of the $1$-dimensional vector space $\KK$. Up to a few minor modifications, the proof of Theorem \ref{thm link} also yields the following.

\begin{corollary}\label{cor link}
The semi-rigid isomorphism classes of the non-split central $1$-extensions of $V$ bijectively correspond to the points of $\PG(H^1(V^*,G))$.
\end{corollary}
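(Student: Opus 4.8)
The plan is to build directly on Theorem~\ref{thm link} and to determine precisely how the extra freedom afforded by a rescaling $t \mapsto kt$ of the kernel $\KK$ acts on cohomology classes. I would begin by recalling from Theorem~\ref{thm link} that, up to rigid isomorphism, the central $1$-extensions of $V$ realized on $\KK \times V$ via~\eqref{1-ext-1} are bijectively indexed by $H^1(V^*, G)$, the split extensions forming the single class attached to the null element. In particular, the non-split central $1$-extensions correspond exactly to the nonzero elements of $H^1(V^*, G)$.

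Next I would analyze the effect of a single rescaling. Fix $k \in \KK \setminus \{0\}$ (nonzero, so that the map is invertible) and let $\sigma_k : (t, v) \mapsto (kt, v)$. Starting from the extension $U_\phi$ with associated cocycle $f = f_\phi$, I would determine the extension $U_{\phi'}$ for which $\sigma_k$ is an isomorphism of $G$-modules inducing the identity on $U/W$: imposing $\sigma_k(g \cdot (t,v)) = g \cdot \sigma_k(t,v)$ and using~\eqref{1-ext-1}, a short computation yields $\phi' = k^{-1}\phi$, hence $f' = k^{-1} f$. Since multiplication by a nonzero scalar preserves the subspace $B^1(V^*, G)$, it descends to scalar multiplication on $H^1(V^*, G)$; thus a rescaling sends the class $[f]$ to $k^{-1}[f]$, and $k^{-1}$ ranges over all nonzero scalars as $k$ does.

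Combining the two steps, a semi-rigid isomorphism --- a rigid isomorphism composed with a rescaling --- identifies two central $1$-extensions exactly when their cohomology classes differ by a nonzero scalar factor. Restricting to the non-split extensions, whose classes are precisely the nonzero vectors of $H^1(V^*, G)$, the semi-rigid isomorphism classes become the orbits of the scalar action of $\KK \setminus \{0\}$ on $H^1(V^*, G) \setminus \{0\}$, which are by definition the points of $\PG(H^1(V^*, G))$. Well-definedness and surjectivity of the resulting correspondence follow from the scalar computation and from Theorem~\ref{thm link} respectively, while injectivity is immediate, since two extensions mapping to the same point share a cohomology class up to a nonzero scalar and are therefore semi-rigidly isomorphic.

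I expect the one point needing genuine care to be the second step: verifying that the rescaling sends the defining cocycle to the correct scalar multiple, and confirming that this operation is well defined on $H^1(V^*, G)$ rather than merely on $Z^1(V^*, G)$. The exact exponent ($k$ versus $k^{-1}$) is immaterial for the conclusion, since in either case the orbit is the full punctured line $(\KK \setminus \{0\}) \cdot [f]$; the remainder of the argument is a direct transcription of Theorem~\ref{thm link}.
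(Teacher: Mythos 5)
Your proposal is correct and follows essentially the same route as the paper, which merely asserts that Corollary~\ref{cor link} follows from the proof of Theorem~\ref{thm link} ``up to a few minor modifications''; your computation of how the rescaling $t\mapsto kt$ multiplies the defining cocycle by a nonzero scalar (and hence acts as scalar multiplication on $H^1(V^*,G)$) is precisely the modification the authors have in mind. The ambiguity you flag between $k$ and $k^{-1}$ is indeed immaterial, and your explicit verification that the scalar action descends from $Z^1$ to $H^1$ is a detail the paper leaves unsaid.
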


\subsubsection{Central extensions of rigid $G$-modules}\label{M*sec}

We say that a $G$-module $V$ is {\em rigid} if its automorphism group is the center of the group $\mathrm{GL}(V)$ of all invertible linear transformations of $V$; explicitly, if $\psi\in \mathrm{GL}(V)$ centralizes $G$ then $\psi = k\cdot\mathrm{id}_V$ for some $k\in\KK\setminus\{0\}$. For instance,

\begin{claim}\label{rigid res}
The adjoint $\mathrm{SL}(n+1,\KK)$-module is rigid.
\end{claim}
\begin{proof}
This statement is implicit in Taussky and Zassenhaus \cite[Theorem 2]{TZ} but it can also be proved it in a straightforward way. Let $G = \mathrm{SL}(n+1,\KK)$ and $A = \mathfrak{sl}(n+1,\KK)$ and let $L := \mathrm{GL}(A)$ be the group of all vector space automorphisms of $A$. Obviously $G$, in its adjoint action on $A$, is a subgroup of $L$. The automorphism group of the $G$-module $A$ is the centralizer $C_{L}(G)$ of $G$ in $L$. All we have to show is that $C_L(G)$ is equal to the center $Z(L)$ of $L$. Proving this claim is a routine exercise, which however is a bit laborious. In order to speed up calculations, it is convenient to choose a set $U$ of generators of $G$ that are easy to handle and check that $C_L(U) = Z(L)$. For instance, we can choose the union of a standard complete family of root subgroups; explicitly, $U = \{I+te_{k,h}~|~ t\in \KK, ~ k, h\in \{1,2,..., n+1\}, ~ k\neq h\}$ where $e_{k,h}$ is the square matrix of order $n+1$ with all null entries but the $(k,h)$-entry, which is equal to $1$. The calculations to perform in order to check that with this choice of $U$ indeed $C_L(U) = Z(L)$, are left  to the reader.
\end{proof}

\begin{lemma}\label{nuovo1}
Suppose $V$ is a given rigid $G$-module and $U = W\cdot V$ is a $1$-non-split central extension of $V$ with  $W$ its kernel.
Then $U/H_1\not\cong U/H_2$ (as $G$-modules) for any choice of distinct hyperplanes $H_1, H_2$ of $W$.
\end{lemma}
\begin{proof}
  Up to replacing $U$ with $U/(H_1\cap H_2)$, we can assume with no loss
  of generality that $\dim(W) = 2$. So, there exists a basis $\{w_1, w_2\}$ of $W$ such that $H_i = \langle w_i\rangle$ for $i = 1, 2$. With the notation of \eqref{wXgv},  given a complement $X$ of $W$ in $U$, for every $g\in G$ there exists linear functionals $\phi_1(\blank, g), \phi_2(\blank, g) \in X^*$ such that $w_X(v,g) = \phi_1(v, g)w_1 + \phi_2(v, g)w_2$ for every $v\in X$. Recall that $X$ can also be equipped with a $G$-module structure, where the action $g_X$ of $g\in G$ on $X$ is defined as in \eqref{gX}.

Suppose by contradiction that $U/H_1\cong U/H_2$ and let $\psi$ be an isomorphism from $U/H_1$ to $U/H_2$. Then there exist an automorphism $\psi_X$ of $X$ (with $X$ regarded as a $G$-module, as explained above) a scalar $k \in \KK\setminus\{0\}$ and a linear functional $\lambda\in X^*$ such that $\psi(tw_1, v) = ((kt+\lambda(v))w_2, \psi_X(v))$ for every $(t,v)\in \KK\times X$.   As $\psi$ is an isomorphism of $G$-modules and $G$ acts trivially on $W$, for every $x\in X$ and $g\in G$ we have
\[((\phi_2(\psi_X(x), g) +  \lambda(x))w_2, \psi_X(x)\cdot g_X) ~ = ~ ((k\phi_1(x,g) + \lambda(x\cdot g_X))w_2, \psi_X(x\cdot g_X)),\]
namely
\begin{equation}\label{eq mia1}
\phi_2(\psi_X(x), g) +  \lambda(x)  ~ = ~ k\phi_1(x, g) + \lambda(x\cdot g_X).
\end{equation}
However $V$ is central by assumption and $X$ is a copy of the $G$-module $V$, which is rigid by assumption. Hence $\psi_X = k'\cdot\mathrm{id}_X$ for some $k'\in\KK\setminus\{0\}$. Consequently $\phi_2(\psi_X(x),g) = \phi_2(k'x,g) = k'\phi_2(x,g)$ and \eqref{eq mia1} amounts to the following:

\begin{equation}\label{eq mia2}
\phi_2(x, g) ~ = ~ \frac{k}{k'}\cdot\left(\phi_1(x, g) + \frac{1}{k}\lambda(x\cdot g_X) - \frac{1}{k}\lambda(x)\right).
\end{equation}
Put $\mu := k^{-1}\lambda$ and $w'_2 := k'^{-1}kw_2$. From \eqref{eq mia2} we obtain
\begin{equation}\label{eq mia3}
w_X(x, g) ~= ~ \phi_1(x, g)(w_1+w'_2) + (\mu(x\cdot g_X) - \mu(x))w'_2, ~~~ \forall x\in X, ~\forall g\in G.
\end{equation}
Let now $H_3$ be the hyperplane of $W$ containing $w_1+w'_2$ (unique since $\dim(W) = 2$ by assumption). In view of \eqref{eq mia3}, the extension $U/H_3$ can be described as the direct sum $\KK\oplus X$ with $G$ acting on it as follows:
\begin{equation}\label{eq mia4}
g ~ : ~ t\oplus x\in \KK\oplus X ~ \longrightarrow ~ (t+\lambda(x\cdot g_X)-\lambda(x))\oplus x\cdot g_X,
\end{equation}
for every $g\in G$. It is clear from \eqref{eq mia4} that the subspace $X' := \{\lambda(x)\oplus x\}_{x\in X}$ of $U/H_3 = \KK\oplus X$ is stabilized by $G$. As $X'\cap \KK = \{0\}$, the $1$-extension $U/H_3$ of $V$ splits over $X'$. This contradicts the assumption that $U$ is $1$-non-split.
\end{proof}

\begin{remark}
\em
As a byproduct of \eqref{eq mia2}, two central $1$-extensions of a given rigid $G$-module are isomorphic (if and) only if they are semi-rigidly isomorphic.
\end{remark}

\begin{theorem}\label{M*}
Let $U = W\cdot V$ be a central extension of a rigid $G$-module $V$. Suppose that $U$ is both $1$-non-split and $1$-universal. Then all non-split $1$-extensions of $V$ are central and $H^1(V^*,G)\cong W^*$.
\end{theorem}
\begin{proof}
As $U$ is $1$-universal, every non-split $1$-extension of $V$ can be obtained (up to isomorphisms) by factorizing $U$ over a hyperplane of $W$. Since $U$ is central by assumption, all non-split $1$-extensions of $V$ are central and, since $U$ is also $1$-non-split, $U/H$ is a non-split $1$-extension of $V$ for every hyperplane $H$ of $W$. Moreover, by Lemma \ref{nuovo1} no two distinct hyperplanes of $W$ give rise to isomorphic $1$-extensions of $V$. It follows that the hyperplanes of $W$ bijectively correspond to the isomorphism classes of the non-split $1$-extensions of $V$. By Corollary \ref{cor link}, the hyperplanes of $W$ bijectively correspond to the points of $\PG(H^1(V^*,G))$.

We still must show that the above bijection between the set of hyperplanes of $W$ and the set of points of  $\PG(H^1(V^*,G))$ is induced by an isomorphism from $W^*$ to $H^1(V^*,G)$. The isomorphism we look for is provided by equation \eqref{alphaX}, where for every $\alpha\in W^*\setminus\{0\}$ a $1$-cocycle $\alpha_X\in Z^1(V^*,G)$ is defined which yields a $1$-extension of $V$ isomorphic to the extension obtained by factorizing $U$ over the kernel of $\alpha$. Since the hyperplanes of $W$ correspond to the points of $\PG(H^1(V^*,G))$, we are sure that $\alpha_X$ is not a coboundary. Replacing $\alpha$ with $k\alpha$ for a scalar $k\in \KK\setminus\{0\}$ amounts to replace $\alpha_X$ with $k\alpha_X$, hence the class $[\alpha_X]\in H^1(V^*,G)$ with $k[\alpha_X]$. (Note also that, if the $1$-extension defined by $\alpha_X$ is realized on $\KK\times V$ as implicitly assumed in Corollary \ref{cor link}, when we replace $\alpha_X$ with $k\alpha_X$ we apply a semi-rigid automorphism to that extension).  Clearly, if $\alpha$ and $\beta$ are linear functionals in $W^*$ and $\gamma = \alpha+\beta$ then $\gamma_X = \alpha_X + \beta_X$.
\end{proof}

\subsection{Projective embeddings and extensions}

We remind the reader that, according to the definition of projective embedding, if $\varepsilon:\Gamma\to\PG(V)$ is a projective embedding of a point-line geometry $\Gamma$ in the projective space $\PG(V)$ of a vector space $V$, the image $\eps(\Gamma)$ of $\Gamma$ by $\eps$ spans $\PG(V)$.

In many (but not all) cases, if the geometry $\Gamma$ admits a (possibly non faithful) action $G_\Gamma$ of a group $G$ as a subgroup of its automorphism group and $\eps\colon \Gamma\rightarrow\PG(V)$ is a $G_\Gamma$-homogeneous projective embedding of $\Gamma$, then the vector space $V$ is naturally endowed with a structure of $G$-module in such a way that the projective action $\mathrm{P}(G)$ of $G$ on $\PG(V)$ is just the lifting of $G_\Gamma$ to $\PG(V)$ through $\eps$ (hence $G_\Gamma \cong \mathrm{P}(G)$ and $\mathrm{P}(G)$ stabilizes $\eps(\Gamma)$). If this is the case we say that $G$  \emph{lifts to $V$ through $\eps$}. Henceforth, when dealing with homogeneous embeddings, we shall always implicitly assume that this is indeed the case.

Given a $G$-module $V$ and a point-line geometry $\Gamma$, if $\Gamma$ admits an embedding $\eps:\Gamma\rightarrow \PG(V)$ such that $\mathrm{P}(G)$ stabilizes $\eps(\Gamma)$, then we say that the module $V$  \emph{hosts} the embedding $\eps$.

In the previous paragraphs we distinguish between $G$ and $G_\Gamma$ or $\mathrm{P}(G)$ but henceforth we will freely omit to do so, provided that this abuse will not cause any misunderstanding.

Our aim in this subsection is to determine conditions which ensure that, given a $G$-module $V$ hosting an embedding $\eps$ of a geometry $\Gamma$ and an extension $U$ of $V$, the $G$-module $U$ hosts a cover of $\eps$, possibly the relatively universal cover of $\eps$.

Throughout this subsection $\Gamma$ is a given (connected) point-line geometry, $G$ a group acting (possibly non-faithfully) on $\Gamma$ as a group of automorphisms and $\eps:\Gamma\rightarrow \PG(V)$ is a given projective embedding of $\Gamma$. We assume that $G$ lifts to $V$ through $\eps$.  Accordingly, $V$ is a $G$-module.

\begin{definition}\label{Gcover}
\em
With $\Gamma$, $G$, $V$ and $\eps$ as above, let $\eps':\Gamma\rightarrow\PG(V')$ be a cover of $\eps$ and $\psi:V'\rightarrow V$ the projection of $\eps'$ onto $\eps$. We say that $\eps'$ is a $G$-\emph{cover} of $\eps$ if $G$ also lifts to $V'$ through $\eps'$ and $\psi$ is a morphism of $G$-modules from $V'$ to $V$, namely the kernel $W$ of $\psi$ is stabilized by $G$ and $\psi$ induces an isomorphism of $G$-modules from $V'/W$ to $V$. (So, $V'$ is an extension of the $G$-module $V$ with $W$ as its kernel).
\end{definition}

For instance, let $\teps:\Gamma\rightarrow\PG(\widehat{V})$ be the relatively universal cover of $\eps$. Then $\teps$ is a $G$-cover of $\eps$ (Appendix \ref{appendix A}, Proposition \ref{propA}). If moreover $\widehat{V}$ is  central as an extension of $V$ then all covers of $\eps$ are $G$-covers.

\begin{remark}
\em
We need $\psi$ to be a morphism of $G$-modules for, otherwise, we could hardly exploit results on module extensions in the investigation of covers of projective embeddings. However one might wonder if it is really necessary to assume this property openly in Definition \ref{Gcover}. Is it not possible to obtain it from a seemingly weaker but more natural property? For instance, assume only that $\psi$ maps the action of $G$ on $\eps'(\Gamma)$ onto the action of $G$ on $\eps(\Gamma)$, namely $\psi(\eps'(p)\cdot g) = \psi(\eps'(p))\cdot g$ for every point $p$ of $\Gamma$ an every $g\in G$. Then, since $\eps'(\Gamma)$ spans $\PG(V')$, the kernel $W$ of $\psi$ is stabilized by $G$ and $\psi$ maps the action of $G$ on $\PG(V'/W)$ onto the action of $G$ on $\PG(V)$. For certain choices of $G$ (when $G$ is perfect, for instance) this forces $\psi$ to induce an isomorphism of $G$-modules from $V'/W$ to $V$, but we do not know if this is always the case. \\
 \end{remark}

\begin{lemma} \label{lemma 1.4}
Let $\eps':\Gamma\rightarrow\PG(V')$ be a $G$-cover of $\eps$ and suppose that $V'$ is central as an extension of the $G$-module $V$.  Suppose moreover that $G$ acts transitively on the set of points of $\Gamma$. Then $V'$ is a totally non-split extension of $V$.
\end{lemma}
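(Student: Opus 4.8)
The plan is to reduce the claim to a statement about hyperplanes of the kernel, via the machinery already established, and then to derive a contradiction of a geometric nature that exploits point-transitivity. Since $V'$ is a central extension of $V$, Proposition~\ref{1-ext} tells us that $V'$ is totally non-split if and only if it is $1$-non-split. So it suffices to prove that $V'/H$ is a non-split extension of $V$ for every hyperplane $H$ of the kernel $W$. I would argue by contradiction: assume $V'/H$ splits for some hyperplane $H$ of $W$ and set $\bar V := V'/H$, $\bar W := W/H$. As the extension is central, $\bar W$ is a $1$-dimensional trivial submodule, and splitting means $\bar V = \bar W \oplus X$ for a $G$-submodule $X$ with $X \cong V$; in particular $X$ is a $G$-invariant hyperplane of $\bar V$.

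Next I would pass to the associated quotient embedding. Since $H \subseteq W = \ker\psi$ and no point-space $\langle \eps'(p)\rangle$ meets $W$, the subspace $H$ is disjoint from every point of $\eps'(\Gamma)$; projecting $\eps'$ modulo $H$ therefore yields a (necessarily $G$-homogeneous) projective embedding $\bar\eps := \eps'/H \colon \Gamma \to \PG(\bar V)$ covering $\eps$. The heart of the argument is then as follows. Pick any line $\ell$ of $\Gamma$. Its image $\bar\eps(\ell)$ is a projective line of $\PG(\bar V)$, and a projective line always meets a hyperplane, so $\bar\eps(\ell)$ contains a point lying in $\PG(X)$. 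Since $\bar\eps$ maps $\ell$ onto the whole projective line $\bar\eps(\ell)$, this point is $\bar\eps(p')$ for some $p' \in \ell$. Hence the set $S := \{\, p : \bar\eps(p) \subseteq X \,\}$ is non-empty. But $X$ is stabilized by $G$ and $\bar\eps$ is $G$-homogeneous, so $S$ is $G$-invariant; by point-transitivity $S$ must then be the whole point-set of $\Gamma$. Consequently $\bar\eps(\Gamma) \subseteq \PG(X)$, so the image of $\bar\eps$ spans only the proper subspace $X \subsetneq \bar V$, contradicting the fact that the image of an embedding spans the ambient projective space. This contradiction forces $V'/H$ to be non-split, as required.

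I expect the main obstacle to lie in two points the argument silently relies on. The first is the careful justification that $\bar\eps = \eps'/H$ really is a projective embedding covering $\eps$ (injective on points, carrying lines onto full projective lines, with spanning image): this should follow from the quotient construction for embeddings recalled in Appendix~\ref{appendix A}, using that $H$ avoids all point-spaces and that, for each line $\ell$, the plane $\langle \eps'(\ell)\rangle$ meets $W$ trivially, so no line is collapsed. The second, more structural point is the necessity of the reduction to hyperplanes: for a general proper submodule $W_0 \subsetneq W$ the complement $X$ in $V'/W_0$ has codimension $\dim(W/W_0)$, which may exceed $1$, and then a projective line need not meet $\PG(X)$, so the ``line meets hyperplane'' step would break down. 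It is exactly Proposition~\ref{1-ext}, available because the extension is central, that lets us restrict to codimension-one complements and thereby run the geometric argument.
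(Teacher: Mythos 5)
Your proof is correct, but it takes a genuinely different route from the one in the paper. The paper argues directly on an arbitrary split quotient $V'/W_0 = W/W_0 \oplus X$: using point-transitivity together with the triviality of the $G$-action on $W$, it writes every point of $\eps'(\Gamma)$ as $\langle w_0 + x_p\rangle$ with a \emph{common} $W$-component $w_0$, and then uses the fact that the difference $x_{p_1}-x_{p_2}$ of two collinear point-vectors must again represent a point of the image (fullness of line images) to force $w_0 = 0$, so that $\eps'(\Gamma)$ lies in $\PG(X)$ and fails to span. You instead first invoke Proposition~\ref{1-ext} to reduce to codimension-one quotients --- legitimate precisely because the extension is central, as you note --- and then obtain a first point of the image inside $\PG(X)$ essentially for free, from the fact that a projective line must meet a hyperplane; $G$-invariance of $X$ and point-transitivity then sweep the whole image into $\PG(X)$, reaching the same contradiction with spanning. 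Both arguments need the fullness of line images and end at the same non-spanning contradiction, but they use centrality differently: the paper uses it to pin down the $W$-component of every point-vector, whereas you use it only through the hyperplane reduction, after which you need nothing about the action on $\bar W$ beyond $G$-stability of the complement $X$. Your reduction buys a slicker existence step (line meets hyperplane) at the price of not handling arbitrary $W_0$ directly; the paper's version is self-contained and uniform in $W_0$. The two points you flag as requiring care --- that $\eps'/H$ is indeed a $G$-homogeneous embedding covering $\eps$ (which follows since $H\subseteq W$ and $\eps'/W=\eps$ is already an embedding, so no point- or line-space meets $H$), and that the hyperplane-intersection step genuinely requires codimension one --- are exactly the right ones, and both are satisfactorily resolved by your argument.
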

\begin{proof}
By way of contradiction suppose that $V'$ splits, i.e. $V'=W\oplus X$ where $X$ is a complement of $W\neq \{0\}$ stabilized by $G.$
Let $v_0\in V'\setminus W$ be a representative vector of a point $\eps'(p_0)\in \eps'(\Gamma).$ So $v_0=w_0+x_0$ for suitable vectors $w_0\in W$ and $x_0\in X\setminus\{0\}$. As $G$ acts point-transitively on $\Gamma$, every other point of $\eps'(\Gamma)$ can be obtained as $g(\eps'(p_0))$ for some $g\in G$, so it is represented in $V'$ by $v_0\cdot g = w_0g+x_0g=w_0+x_0g$, since $G$ acts trivially on $W.$ So, for every point $p\in \Gamma$, the point $\eps'(p)$  can be represented by a vector of the form  $w_0+x_p$ where $w_0\in W$ does not depend on $p$ and $x_p\in X\setminus \{0\}.$

Let now $p_1$ and $p_2$ be two collinear points in $\Gamma$. The vector $v := x_{p_1}-x_{p_2}=(w_0+x_{p_1})-(w_0+x_{p_2})$ represents a point of the line of $\PG(V')$ through $\eps'(p_1)$ and $\eps'(p_2).$ That line is a line of $\eps'(\Gamma)$, since $p_1$ and $p_2$ are collinear in $\Gamma$. Hence there exists a point $p_3$ in the line of $\Gamma$ through $p_1$ and $p_2$ such that $\langle v\rangle = \eps'(p_3)$. However $v\in X$, since $v = x_{p_1}-x_{p_2}$ and $x_{p_1}, x_{p_2}\in X$. So $\eps'(p_3)$ is represented by a vector $v \in X$. On the other hand, $\eps'(p_3)$ is represented by $w_0 + x_{p_3}$. Therefore $w_0 = 0$ and $x_{p_3}$ is proportional to $v$. So, every point of $\eps'(\Gamma)$ is represented by a vector in $X$. This implies that $\eps'(\Gamma)$ generates $\PG(X)$ which is a proper subspace of $\PG(V')$. A contradiction has been reached.

Therefore $V'$ is non-split. Since $V'/W_0$ is a central extension of $V$, the same argument used for $V'$ applies to $V'/W_0$. Hence $V'/W_0$ is non-split for any arbitrary proper subspace $W_0$ of $W.$ So, $V'$ is totally non-split.
\end{proof}

In the sequel $V' = W\cdot V$ is a given extension of $V$ and $\pi$ is the morphism from $\PG(V')$ to $\PG(V)$ induced by the natural projection of $V'$ onto $V$. For the moment we assume neither that $V'$ hosts a cover of $\eps$ nor that it is central as an extension of $V$. We also make no assumptions on $G.$

Given a point $p'\in \PG(V')\setminus\PG(W)$, let $p := \pi(p')$. Let $G_{p'}$ and $G_p$ be the stabilizers in $G$ of $p'$ and  $p$ in the action of $G$ on respectively $\PG(V')$ and $\PG(V)$. Clearly, $G_{p'}\subseteq G_p$ but in general $G_{p'} \subsetneq G_p$.

\begin{definition}
\em
If $G_{p'} = G_p$ then we say that $p'$ is  \emph{well-stabilized} (\emph{in $G$}). Similarly, given a line $\ell'$ of $\PG(V')$ skew with $\PG(W)$ let $\ell = \pi(\ell')$. We say that $\ell'$ is  \emph{well-stabilized} if $G_{\ell'} = G_{\ell}$.
\end{definition}

The group $G$ stabilizes the set of well-stabilized points (lines) of $\PG(V')$. Indeed if $x$ is a point or a line of $\PG(V')$ then $G_{x\cdot g} = g^{-1}G_{x}g$. Assuming that $x$ is exterior to $\PG(W)$, the same holds for $\pi(x)$. Note also that $\pi(x\cdot g) = \pi(x)\cdot g$. Hence $G_{x\cdot g} = G_{\pi(x)\cdot g}$ if and only if $G_{x} = G_{\pi(x)}$.

\begin{lemma}\label{well stab}
Let $O'$ be an orbit of $G$ on the set of well-stabilized points (lines) of $\PG(V')$. Then $\pi$ bijectively maps $O'$ onto an orbit $O$ of $G$ on the set of points (lines) of $\PG(V)$.
\end{lemma}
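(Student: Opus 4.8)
The plan is to establish two things: that $\pi$ sends the orbit $O'$ onto a single $G$-orbit $O$ of points (lines) of $\PG(V)$, and that the restriction $\pi|_{O'}$ is injective. The first point is formal. Since $W$ is a submodule, $G$ stabilizes $\PG(W)$ and hence its complement $\PG(V')\setminus\PG(W)$; as $O'$ consists of well stabilized points, it is contained in that complement, so the whole orbit stays clear of $\PG(W)$ and $\pi$ is defined on all of $O'$. Fixing any $p'\in O'$ and writing $p:=\pi(p')$, the equivariance $\pi(x\cdot g)=\pi(x)\cdot g$ recorded just before the statement gives $\pi(O')=\pi(p'\cdot G)=\pi(p')\cdot G=p\cdot G=:O$, which is by definition a $G$-orbit on the points of $\PG(V)$. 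Thus $\pi$ restricts to a surjection $O'\to O$, and it remains only to prove injectivity.

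For injectivity I would take $p_1',p_2'\in O'$ with $\pi(p_1')=\pi(p_2')$ and show $p_1'=p_2'$. Since $O'$ is a single $G$-orbit, $p_2'=p_1'\cdot g$ for some $g\in G$. Applying $\pi$ and using equivariance yields $\pi(p_1')=\pi(p_2')=\pi(p_1')\cdot g$, so $g$ lies in the stabilizer $G_{p_1}$ of $p_1:=\pi(p_1')$. This is exactly where the hypothesis is used: well stabilization of $p_1'$ means $G_{p_1'}=G_{p_1}$, so $g\in G_{p_1'}$ and therefore $p_2'=p_1'\cdot g=p_1'$. The case of lines is identical: one replaces points by lines $\ell'$ skew to $\PG(W)$, uses that $\pi$ carries such a line to a line $\ell=\pi(\ell')$ of $\PG(V)$ and that this assignment is $G$-equivariant, and invokes the well-stabilization condition $G_{\ell'}=G_\ell$ in place of $G_{p_1'}=G_{p_1}$.

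The argument is short because all the substance has been loaded into the definition of \emph{well stabilized} and into the two equivariance facts ($\pi(x\cdot g)=\pi(x)\cdot g$ and $G_{x\cdot g}=g^{-1}G_x g$) established immediately before the statement. The only step needing any care is injectivity, and there the single decisive ingredient is the equality $G_{p'}=G_{\pi(p')}$: without it, two distinct points of an orbit upstairs could collapse to one point downstairs, their preimages being related by an element of $G_p\setminus G_{p'}$. Accordingly I do not anticipate a genuine obstacle, only the need to invoke the correct equivariance relations in the right order and to record that $O'$ never meets $\PG(W)$ so that $\pi$ is everywhere defined on it.
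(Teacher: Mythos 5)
Your proof is correct and follows essentially the same route as the paper: surjectivity onto an orbit via the equivariance $\pi(x\cdot g)=\pi(x)\cdot g$, and injectivity by writing one point of $O'$ as a $G$-translate of the other and using $G_{p'}=G_{\pi(p')}$ to conclude the translating element already fixes the point upstairs. The only addition is your (harmless) explicit remark that $O'$ avoids $\PG(W)$ so that $\pi$ is defined on it.
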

\begin{proof}
Since $\pi(x\cdot g) = \pi(x)\cdot g$ for every point or line $x$ of $\PG(V')$ exterior to $\PG(W)$, the projection $\pi$ maps $O'$ onto an orbit $O$ of $G$. Let $x, y\in O'$. Then $G_x = G_{\pi(x)}$ and $G_y = G_{\pi(y)}$. Moreover $y = x\cdot g$ for some $g\in G$. Accordingly, $\pi(y) =\pi(x)\cdot g$. If $\pi(x) = \pi(y)$ then $g\in G_{\pi(x)}$. Hence $g\in G_x$ since $x$ is well-stabilized. Consequently $y = x$.
\end{proof}

\begin{definition}
\em
We  say that a point (a line) of $\PG(V')\setminus\PG(W)$ is an  \emph{$\eps$-point} (an  \emph{$\eps$-line}) if $\pi$ maps it onto a point (a line) of $\eps(\Gamma)$. Since the group $G$ stabilizes $\eps(\Gamma)$ in its action on $\PG(V)$, it also stabilizes the set of $\eps$-points and the set of $\eps$-lines in its action on $\PG(V')$.
\end{definition}

\begin{definition}\label{very well}
\em
We say that a well-stabilized  $\eps$-point $p'\in\PG(V')$ is \emph{very well-stabilized} (in $G$) if the set of $\eps$-lines through $p'$ contains a subset $L(p')$ of well-stabilized $\eps$-lines such that $\pi$ induces a bijection from $L(p')$ to the set of lines of $\eps(\Gamma)$ through the point $\pi(p')$ of $\eps(\Gamma)$ and the stabilizer of $p'$ in $G$ stabilizes $L(p')$.
\end{definition}

Clearly, $G$ stabilizes the set of well-stabilized $\eps$-points of $\PG(V')$.

\begin{definition}
\em
We say that $G$ is \emph{semi-flag-transitive} on $\Gamma$ if it is transitive on the set of points of $\Gamma$ and for every line $\ell$ of $\Gamma$, the stabilizer of $\ell$ in $G$ is transitive on the set of points of $\ell.$
\end{definition}

For instance, $\mathrm{SL}(n+1,\KK)$ acts semi-flag-transitively but not flag-transitively on $A_{n,\{1,n\}}(\KK)$.

\begin{lemma}\label{lemma 1.5}
Suppose that $G$ acts semi-flag-transitively on $\Gamma$ and at least one very well-stabilized $\eps$-point exists in $\PG(V')$. Then the $G$-module $V'$ admits a submodule $V''$ satisfying both the following:
\begin{enumerate}
\item  $W+V'' = V'$ (hence $V''$ is an extension of $V$ with $W\cap V''$ as its kernel);
\item $V''$ hosts a $G$-cover $\eps''$ of $\eps$ and the restriction of $\pi$ to $V''$ yields the morphism from the embedding $\eps''$ to the embedding $\eps$.
\end{enumerate}
\end{lemma}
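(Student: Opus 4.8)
The plan is to build the cover $\eps''$ by transporting a single very well stabilized $\eps$-point around under $G$, and to take $V''$ to be the subspace it spans. Let $p'_0$ be the given very well stabilized $\eps$-point, put $x_0 := \eps^{-1}(\pi(p'_0))$, and let $O' := p'_0\cdot G$ be its $G$-orbit in $\PG(V')$. Since $G$ stabilizes the set of well stabilized $\eps$-points, $O'$ consists of well stabilized $\eps$-points, so by Lemma \ref{well stab} the projection $\pi$ maps $O'$ bijectively onto a $G$-orbit $O$ of points of $\PG(V)$; as $G$ is point-transitive on $\Gamma$ (semi-flag-transitivity), $O$ is exactly the point set of $\eps(\Gamma)$. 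Composing with $\eps^{-1}$, I obtain a well-defined, $G$-equivariant injection $\eps''$ from the points of $\Gamma$ to $O'\subseteq\PG(V')$, characterised by $\pi(\eps''(x)) = \eps(x)$ and $\eps''(x_0) = p'_0$. I then set $V'' := \langle O'\rangle$; being the span of a $G$-orbit it is a $G$-submodule of $V'$.

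The heart of the argument, and the step I expect to be the \emph{main obstacle}, is to show that $\eps''$ sends each line of $\Gamma$ onto a projective line of $\PG(V'')$, so that $\eps''$ is genuinely a projective embedding. I would treat first the lines $\ell$ through $x_0$. For such an $\ell$, let $\ell'$ be the unique member of the set $L(p'_0)$ of Definition \ref{very well} with $\pi(\ell') = \eps(\ell)$; it is a well stabilized $\eps$-line through $p'_0$, hence skew to $\PG(W)$, so $\pi$ restricts to a projective isomorphism $\ell'\to\eps(\ell)$, and moreover $G_{\ell'} = G_{\eps(\ell)} = G_\ell$ (the first equality because $\ell'$ is well stabilized, the second by $G$-equivariance and injectivity of $\eps$ on lines). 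Now for any point $y\in\ell$, semi-flag-transitivity provides $g\in G_\ell$ with $g(x_0) = y$; since $g\in G_\ell = G_{\ell'}$ stabilizes $\ell'$ and $p'_0\in\ell'$, we get $\eps''(y) = g(\eps''(x_0)) = g(p'_0)\in\ell'$. As $y$ ranges over $\ell$ its images fill, through the bijection $\pi|_{\ell'}$, the whole of $\ell'$; hence $\eps''(\ell) = \ell'$ is a projective line. For an arbitrary line $\ell$ I would pick $z\in\ell$ and $g\in G$ with $g(x_0) = z$, apply the previous case to the line $g^{-1}(\ell)\ni x_0$, and transport the resulting $\eps$-line back by $g$ using the $G$-equivariance of $\eps''$.

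It then remains to assemble the conclusions. Injectivity of $\eps''$ on points and on lines follows at once by applying $\pi$ and using that $\eps$ is an embedding; together with the previous paragraph this makes $\eps''\colon\Gamma\to\PG(V'')$ a $G$-homogeneous projective embedding whose image is spanned by $O'$ by construction. The relation $\pi\circ\eps'' = \eps$ shows that $\eps''$ covers $\eps$ with projection $\pi|_{V''}$. Because $\eps(\Gamma)$ spans $\PG(V)$ while $\eps''(\Gamma)$ spans $\PG(V'')$, the morphism $\pi|_{V''}\colon V''\to V$ is surjective with kernel $W\cap V''$; hence $V''/(W\cap V'')\cong V$ as $G$-modules and $\eps''$ is a $G$-cover of $\eps$ in the sense of Definition \ref{Gcover}, which is item (2). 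Finally, surjectivity of $\pi|_{V''}$ onto $V$ yields item (1) directly, with no dimension count: given $v'\in V'$ choose $v''\in V''$ with $\pi(v'') = \pi(v')$, so that $v' - v''\in\ker\pi = W$ and $v' = (v'-v'') + v''\in W + V''$; thus $W + V'' = V'$.
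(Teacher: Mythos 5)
Your proposal is correct and follows essentially the same route as the paper: take the $G$-orbit $O'$ of the very well stabilized point, use Lemma \ref{well stab} to see that $\pi$ maps $O'$ (and the $G$-translates of the lines in $L(p'_0)$) bijectively onto $\eps(\Gamma)$, define $\eps''$ as the inverse of this bijection, and set $V''=\langle O'\rangle$. The only difference is one of detail: you spell out explicitly, via semi-flag-transitivity and the equality $G_{\ell'}=G_\ell$, why each line of $\Gamma$ is carried onto a full projective line of $\PG(V'')$, a step the paper compresses into the remark that $\ell'\subseteq O'$ for every $\ell'\in L(p'_0)$.
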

\begin{proof}
Let $p'_0$ be a very well-stabilized $\eps$-point of $\PG(V')$ and $L(p'_0)$ a set of lines through $p'_0$ as in Definition \ref{very well}. Let $O' = \{p'_0\cdot g\}_{g\in G}$ be the orbit of $G$ containing $p'_0$. By the semi-flag-transitivity of $G$ on $\Gamma$ we have that $\ell'\subseteq O'$ for every line $\ell'\in L(p'_0)$. Hence $\ell'\cdot g \subseteq O'$ for every $\ell'\in L(p'_0)$ and every $g\in G$.

Let $\Gamma'$ be the subgeometry of $\PG(V')$ with $O'$ as the set of points and $\{\ell'\cdot g ~:~ \ell'\in L(p'_0),~ g\in G\}$ as the set of lines.  By Lemma \ref{well stab} the projection $\pi$ induces an isomorphism from $\Gamma'$ to $\eps(\Gamma)$. Accordingly, the mapping $\eps''$ which maps every point $p$ of $\Gamma$ onto $\pi^{-1}(\eps(p))\cap O'$ is a projective embedding of $\Gamma$ in the span $\langle O'\rangle$ of $O'$ in $\PG(V')$. This embedding is clearly a $G$-cover of $\eps$.

Let $V''$ be the subspace of $V'$ corresponding to $\langle O'\rangle$. Clearly $V''$ is stabilized by $G$ and, since $\eps(O')$ (which is the point-set of $\eps(\Gamma)$) spans $\PG(V)$, we also have that  $W+V'' = V'$.
\end{proof}

\begin{theorem}\label{theo 1.5}
Suppose that $G$ acts semi-flag-transitively on $\Gamma$ and at least one very well-stabilized $\eps$-point exists in $\PG(V')$.  Assume also that $V'$ is central as an extension of $V$. The following are equivalent.
\begin{enumerate}
\item The vector space $V'$ hosts a $G$-cover $\eps'$ of $\eps$ and $\pi$ is the morphism from $\eps'$ to $\eps$.
\item The extension $V' = W\cdot V$ is totally non-split.
\item The extension $V'$ is $1$-non-split.
\end{enumerate}
\end{theorem}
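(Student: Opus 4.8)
The plan is to establish the three-way equivalence by proving $(2)\Leftrightarrow(3)$ separately and then closing the loop $(1)\Rightarrow(2)\Rightarrow(1)$. The equivalence of $(2)$ and $(3)$ requires no new work: since $V'$ is assumed central as an extension of $V$, Proposition \ref{1-ext} applies verbatim and tells us that $V'$ is totally non-split if and only if it is $1$-non-split. So the whole content of the theorem lies in the equivalence of $(1)$ with total non-splitness.

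For $(1)\Rightarrow(2)$ I would simply invoke Lemma \ref{lemma 1.4}. Its hypotheses are all in force here: $V'$ is central by the standing assumption of the theorem, and semi-flag-transitivity of $G$ on $\Gamma$ in particular makes $G$ transitive on the points of $\Gamma$. Hence, if $V'$ hosts a $G$-cover $\eps'$ of $\eps$ with $\pi$ as the associated morphism, Lemma \ref{lemma 1.4} yields at once that the extension $V' = W\cdot V$ is totally non-split.

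The substance of the argument is the direction $(2)\Rightarrow(1)$. Here I would first apply Lemma \ref{lemma 1.5}, whose hypotheses—semi-flag-transitivity of $G$ together with the existence of at least one very well stabilized $\eps$-point in $\PG(V')$—are exactly the assumptions carried over from that lemma into the present theorem. This produces a submodule $V''\subseteq V'$ with $W+V'' = V'$ such that $V''$ hosts a $G$-cover $\eps''$ of $\eps$, with the restriction $\pi|_{V''}$ serving as its morphism; its kernel as an extension of $V$ is $W'' := W\cap V''$. It then suffices to prove that $V'' = V'$, for in that case $\eps' := \eps''$ and $\pi = \pi|_{V''}$ witness condition $(1)$.

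To force $V'' = V'$ I would argue by contradiction, supposing $W''\subsetneq W$. Then $V'/W''$ is a \emph{proper} quotient of $V'$ in the sense of Definition \ref{def0}. The key claim is that this quotient splits: from $W+V'' = V'$ we get $W/W'' + V''/W'' = V'/W''$, while $(W/W'')\cap(V''/W'') = (W\cap V'')/W'' = W''/W'' = 0$; and since $W$, $V''$, $W''$ are all $G$-submodules of $V'$, the two summands $W/W''$ and $V''/W''$ are $G$-submodules of $V'/W''$. As $V''/W''\cong V$ as a $G$-module (because $\eps''$ covers $\eps$ with kernel $W''$), this exhibits $V'/W''$ as a split extension of $V$, contradicting $(2)$. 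Therefore $W'' = W$, i.e. $W\subseteq V''$, and with $W+V''=V'$ this gives $V''=V'$. The step I expect to demand the most care is precisely this contradiction: one must verify that $V'/W''$ genuinely decomposes as an \emph{internal direct sum of $G$-submodules} with trivial intersection, so that it constitutes a splitting in the technical sense required by Definition \ref{def0}; once that verification is in place, the remainder is a direct appeal to the lemmas already established.
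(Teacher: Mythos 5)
Your proposal is correct and follows essentially the same route as the paper: (2)$\Leftrightarrow$(3) via Proposition \ref{1-ext}, (1)$\Rightarrow$(2) via Lemma \ref{lemma 1.4}, and (2)$\Rightarrow$(1) by taking $V''$ from Lemma \ref{lemma 1.5}, setting $W''=W\cap V''$, and deriving a contradiction from the splitting of the proper quotient $V'/W''=W/W''\oplus V''/W''$ with $V''/W''\cong V$. Your verification that this is a genuine internal direct sum of $G$-submodules is a slightly more careful rendering of the same decomposition the paper writes down.
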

\begin{proof}
  The equivalence of claims 2 and 3 has been stated in Proposition \ref{1-ext}. With $V''$ as in the statement of Lemma \ref{lemma 1.5}, the equivalence of 1 and 2 amounts to the following: $V'' = V'$ if and only if $V'$ is totally non-split. The `only if' part of this claim follows from Lemma \ref{lemma 1.4}. Turning to the `if' part, suppose by contradiction that $V'' \subsetneq V'$ and
  put $W' = V''\cap W$. Then $V'/W' = W/W'\oplus V''/W'$. However $V''/W' \cong V$, as stated in claim 1 of Lemma \ref{lemma 1.5}. So, at least one proper quotient of $V'$ splits, against the hypotheses made on $V'$.
\end{proof}

\begin{theorem}\label{1-universal bis}
Suppose that $G$ is perfect and $V$ is rigid. Then every $1$-universal $1$-non-split central extension of $V$ is $1$-complete.
\end{theorem}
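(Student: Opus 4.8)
The plan is to prove $1$-completeness directly, i.e.\ to show that \emph{every} $1$-extension of $U$ splits. So let $U' = W_1\cdot U$ be an arbitrary $1$-extension of $U$, with $\dim(W_1) = 1$; I must prove that $U'$ splits over $U$. Since $G$ is perfect, Remark \ref{Gperf-1-et} guarantees that $U'$ is central over $U$, so $G$ acts trivially on $W_1$. Let $\widetilde{W}$ be the preimage of $W$ in $U'$ under the projection $U'\to U$. Then $\widetilde{W}\supseteq W_1$, $\widetilde{W}/W_1\cong W$ and $U'/\widetilde{W}\cong V$, so $U' = \widetilde{W}\cdot V$ is an extension of $V$ with kernel $\widetilde{W}$ of dimension $\dim(W)+1$. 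The first thing to check is that this bigger extension is again central: $G$ acts trivially both on $W_1$ and on $\widetilde{W}/W_1\cong W$, so in a basis adapted to $W_1\subseteq\widetilde{W}$ the group $G$ acts on $\widetilde{W}$ by unitriangular matrices $\left(\begin{smallmatrix}1 & * \\ 0 & I\end{smallmatrix}\right)$; these form an abelian group, and as $G$ is perfect its image in it is trivial. Hence $G$ acts trivially on all of $\widetilde{W}$, and in particular every subspace of $\widetilde{W}$ is a submodule.

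Next I would bring in the cohomological identification. Since $U = W\cdot V$ is $1$-non-split and $1$-universal and $V$ is rigid, Theorem \ref{M*} applies and gives an isomorphism $\theta\colon W^*\to H^1(V^*,G)$, $\alpha\mapsto[\alpha_X]$, where $\alpha_X$ is the cocycle of \eqref{alphaX} attached to the $1$-extension $U/\ker(\alpha)$. Repeating the same construction for the central extension $U' = \widetilde{W}\cdot V$ yields a linear map $\theta'\colon\widetilde{W}^*\to H^1(V^*,G)$. Under the exact sequence $0\to W^*\to\widetilde{W}^*\to W_1^*\to 0$ dual to $0\to W_1\to\widetilde{W}\to W\to 0$, the functionals on $\widetilde{W}$ vanishing on $W_1$ are exactly $W^*$, and on them $\theta'$ restricts to $\theta$, because the cocycle attached to $U'$ projects onto the cocycle attached to $U$ when one factors out $W_1$. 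Thus $\theta'|_{W^*} = \theta$ is an isomorphism onto $H^1(V^*,G)$; consequently $\theta'$ is surjective, $W^*\cap\ker(\theta') = \{0\}$ and $\widetilde{W}^* = W^*\oplus\ker(\theta')$, so $\ker(\theta')\cong\widetilde{W}^*/W^*\cong W_1^*$ is $1$-dimensional (this bookkeeping uses only duals of quotients, hence is valid even when $W$ is infinite-dimensional).

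The decisive step is the transversality. I pick a nonzero $\beta\in\ker(\theta')$. Since $\ker(\theta')\cap W^* = \{0\}$ and $W^*$ is precisely the set of functionals vanishing on $W_1$, the functional $\beta$ does not vanish on $W_1$; as $\dim(W_1)=1$ this means $\widetilde{W} = W_1\oplus H'$ with $H' := \ker(\beta)$ a hyperplane of $\widetilde{W}$. Because $\theta'(\beta) = 0$, the $1$-extension $U'/H'$ of $V$ is split (its class in $H^1(V^*,G)$ vanishes, cf.\ the proof of Theorem \ref{thm link}), say $U'/H' = (\widetilde{W}/H')\oplus X$ with $X\cong V$ a submodule. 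I then pull this complement back: let $P$ be the preimage of $X$ in $U'$. Then $P$ is a submodule containing $H'$, and one checks $P\cap W_1 = \{0\}$ and $P + W_1 = U'$, so $U' = P\oplus W_1$ as $G$-modules and the projection $U'\to U'/W_1 = U$ restricts to an isomorphism $P\to U$. This exhibits $U'$ as a split extension of $U$. Since $U'$ was arbitrary, $U$ is $1$-complete.

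I expect the main obstacle to be exactly this last transversality-and-pullback maneuver: the cohomological count only tells me that the larger extension $U' = \widetilde{W}\cdot V$ has a ``trivial direction'' $\ker(\theta')$ that splits off \emph{as an extension of $V$}, whereas what I need is a splitting \emph{over $U$}. The point that makes it work is that $\ker(\theta')$ is transverse to $W_1$, guaranteed by $\theta'|_{W^*}=\theta$ being injective, which is part of the isomorphism $W^*\cong H^1(V^*,G)$ of Theorem \ref{M*}. This transversality lets me choose the splitting hyperplane $H'$ so that $W_1\oplus H' = \widetilde{W}$, and thereby convert the $V$-splitting of $U'/H'$ into a $U$-splitting of $U'$. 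The other points to watch are the perfectness-driven centrality of $U'$ over $V$ and the verification that the pulled-back complement $P$ is $G$-stable and maps isomorphically onto $U$, both of which are routine once the set-up is in place.
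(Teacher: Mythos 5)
Your proof is correct, but it takes a genuinely different route from the paper's. The paper argues by contradiction: assuming a non-split $1$-extension $\widetilde{V}=K\cdot\overline{V}$ exists, it first shows (as you do) that $G$ acts trivially on $K\oplus W$, then proves that $\widetilde{V}$ is $1$-non-split as an extension of $V$, and finally observes that $\widetilde{V}/W$ and $\widetilde{V}/(K\oplus H)$ --- where $H$ is the hyperplane of $W$ supplied by $1$-universality --- would be isomorphic quotients of $\widetilde{V}$ over two \emph{distinct} hyperplanes of $K\oplus W$, contradicting Lemma \ref{nuovo1}. You instead give a direct, essentially constructive splitting: the linear map $\theta'\colon\widetilde{W}^*\to H^1(V^*,G)$ restricts on the annihilator of $W_1$ to the isomorphism $\theta$ of Theorem \ref{M*}, which forces $\ker(\theta')$ to be a line transverse to that annihilator; the kernel $H'$ of a nonzero $\beta\in\ker(\theta')$ is then a hyperplane of $\widetilde{W}$ complementary to $W_1$ with $U'/H'$ split over $V$, and pulling back the complement yields $U'=P\oplus W_1$ with $P\cong U$. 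Both arguments consume the hypotheses in the same places (perfectness for centrality of $\widetilde{W}$, rigidity through Lemma \ref{nuovo1}, here hidden inside Theorem \ref{M*}, and $1$-universality/$1$-non-splitness through the bijectivity of $\theta$). What your version buys is that it avoids the intermediate claim that the big extension is $1$-non-split over $V$ and exhibits the splitting explicitly; what it costs is a heavier reliance on $\theta$ being a genuine linear isomorphism compatible with quotients, so the one point you should write out in full is the verification that for $\alpha\in\widetilde{W}^*$ vanishing on $W_1$ the cocycle $\alpha_{X'}$ computed in $U'$ represents the same class as $\bar\alpha_X$ computed in $U=U'/W_1$ (it does: with $X=(X'+W_1)/W_1$ one has $w_X(v,g)=\pi_1(w_{X'}(v,g))$, so the identity even holds at the level of cocycles), since this is exactly what makes $\ker(\theta')$ nonzero and transverse to $W^*$.
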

\begin{proof}
Let $\overline{V} = W\cdot V$ be a $1$-universal $1$-non-split central extension of $V$. By way of contradiction suppose $\widetilde{V} = K\cdot \overline{V}=(K\cdot W)\cdot V$ is a non-split extension of $\overline{V}$ where $\dim(K)=1$. By assumption, $G$ acts trivially on $W=(K\cdot W)/K$ and since it is perfect (perfect groups have trivial $1$-dimensional representations), $G$ also acts trivially on $K$. So, given a basis $\{k_0\}$ of $K$, the action of $G$ on  $K\times W$ is as follows:  $g\in G$ maps $(k,w)\in K\times W$ onto $(k+k_g(w)k_0, w)$ where $k_g\in W^*$ is a linear functional of $W$. Accordingly, $g_1g_2$ maps $(k,w)$ onto $(k+ k_{g_1g_2}(w)k_0, w)$. However $(k,w)\cdot g_1g_2 = ((k,w)\cdot g_1)\cdot g_2 =  (k+k_{g_1}(w)k_0+k_{g_2}(w)k_0, w)$. Therefore $k_{g_1g_2} = k_{g_1}+k_{g_2}$ and the action of $G$ on $K\times W$ is commutative. Since $G$ is perfect, this action is necessarily the trivial one, hence $k_g$ is always the null functional.

We shall now prove that $\widetilde{V} = (K\times W)\cdot V$ is a $1$-non-split extension of $V$. Now it is convenient to regard $K\times W$ as a direct sum $K\oplus W$ rather than a product: in this way $K$ and $W$ can be regarded as vector subspaces of $\widetilde{V}$.

 Suppose for a contradiction that a hyperplane $W'$ of $K\oplus W$ exists such that $\widetilde{V}/W'$ splits. Suppose firstly that $W'$ contains $K$. Then the kernel $(W+K)/K \cong W$ of the extension $(\overline{V}+K)/K$ of $V$ admits a hyperplane $W'/K$ such that the quotient of $(\overline{V}+K)/K$ over $W'/K$ splits. However $(\overline{V}+K)/K \cong \overline{V}$, which is $1$-non-split by assumption. Therefore $W'$ cannot contain $K$. Hence $K\oplus W = W'\oplus K$. We are assuming that $\widetilde{V}/W' = (W'+K)/W' \oplus X/W'$ for a subspace $X$ of $\widetilde{V}$ containing $W'$ and stabilized by $G$. Clearly, $X \cong \widetilde{V}/K \cong \overline{V}$ (isomorphisms of $G$-modules). So $\widetilde{V} = K\oplus X$ where $X\cong \overline{V}$ is stabilized by $G$. Consequently, the extension $\widetilde{V} = K\cdot\overline{V}$ splits. This contradicts the assumption that $\widetilde{V}$ is a non-split extension of $\overline{V}$.  Thus, we are forced to conclude that $\widetilde{V}$ is $1$-non-split.

 Consider now the quotient $V'=\widetilde{V}/W=K\cdot(\overline{V}/W)\cong K\cdot V$, which is a non-split central $1$-extension of $V$ (had $V'$
 been split then $\widetilde{V} = K\cdot \overline{V}$ would split as well). However $\overline{V}$ is $1$-universal, by assumption. Consequently, this extension can also be obtained as a quotient of $\overline{V}$ over a suitable hyperplane $H$ of $W$. So, we can obtain $V'$ by factorizing $\widetilde{V}$ over two different hyperplanes of $K\oplus W$, namely $W$ and $K\oplus H$. This is impossible in view of Lemma \ref{nuovo1} since, as proved in the previous paragraph, $\widetilde{V}$ is $1$-non-split and $V$ is rigid by assumption. Ultimately, we have proved that $\widetilde{V}$ cannot exist.
\end{proof}

Let $\widehat{\eps}\colon \Gamma\rightarrow\PG(\widehat{V})$ be the relatively universal cover of $\eps$ and let $W$ be the kernel of the projection $\pi:\widehat{V}\rightarrow V$ of $\teps$ onto $\eps$.

Then, as remarked in the comment following Definition \ref{Gcover}, the embedding $\teps$ is a $G$-cover of $\eps$. Accordingly, $G$ acts on $\widehat{V}$, stabilizes $W$ and $\pi$ induces an isomorphism of $G$-modules from $\widehat{V}/W$ to $V$. So $\widehat{V} = W\cdot V$ is an extension of $V$.

\begin{theorem}\label{1-universal}
Let $\widehat{\eps}\colon \Gamma\rightarrow\PG(\widehat{V})$ be the relatively universal cover of $\eps$ and let $W$ be the kernel of the projection $\pi:\widehat{V}\rightarrow V$ of $\teps$ onto $\eps$. So $\widehat{V} = W\cdot V$ is an extension of $V$.
Suppose that $G$ acts trivially on $W$ and semi-flag-transitively on $\Gamma$. Then both the following hold.
\begin{enumerate}
\item The extension $\widehat{V} = W\cdot V$ is totally non-split.
\item Suppose that every non-split $1$-extension of $V$ contains at least one $\eps$-point very well-stabilized in $G$. Then the extension $\widehat{V}$ is $1$-universal.
\end{enumerate}
\end{theorem}
\begin{proof}
Claim 1 follows from  Lemma~\ref{lemma 1.4}. Turning to claim 2, let $V' = K\cdot V$ be a non-split $1$-extension of $V$. By assumption, $\PG(V')$ contains at least one $\eps$-point very well-stabilized by $G$. Let $V''$ be as in the statement of Lemma \ref{lemma 1.5}. Then $V'' = V'$. Indeed if otherwise  then $V'' \cong V$ and, since $\dim(K) = 1$, claim 2 of Lemma \ref{lemma 1.5} (with $K$ in the role of $W$) forces $V' = K\oplus V \cong K\times V$, contradicting the hypothesis that $V'$ is non-split. So, $V'' = V'$ as claimed. Part 1 of Lemma \ref{lemma 1.5} now implies that $V'$ hosts a cover $\eps'$ of $\eps$. Since $\teps$ is the universal cover of $\eps$, it also covers $\eps'$, namely we obtain $\eps'$ by factorizing $\widehat{V}$ over a subspace (in fact a hyperplane) of $W$. Clearly $\teps$, being relatively universal, is also the relatively universal cover of $\eps'$. Hence it is a $G$-cover of $\eps'$. Accordingly, the $G$-module $V'$ is a quotient of $\widehat{V}$. Claim 2 is proven.
\end{proof}

\begin{remark}
\em
By Theorems \ref{1-universal bis} and \ref{1-universal}, under the hypotheses of claim 2 of Theorem \ref{1-universal}, if $G$ is perfect and $V$ is rigid then $\widehat{V}$ is also $1$-complete. We conjecture that a stronger conclusion can be drawn, namely that under these hypotheses $\widehat{V}$ admits no non-split extensions, but we have done no serious attempts to prove this.
\end{remark}

\begin{corollary}\label{1-universal ter}
Let $\widehat{\eps}\colon \Gamma\rightarrow\PG(\widehat{V})$ be the relatively universal cover of $\eps$ and let $W$ be the kernel of the projection $\pi:\widehat{V}\rightarrow V$ of $\teps$ onto $\eps$. So $\widehat{V} = W\cdot V$ is an extension of $V$.
Suppose that $G$ acts trivially on $W$ and transitively on the set of points of $\Gamma$. Suppose moreover that $G$ is perfect and $V$ is rigid.

Let $\overline{V}$ a $1$-universal and $1$-non-split central extension of $V$ and suppose that this extension hosts a $G$-cover $\bar{\eps}$ of $\eps$. Then $\overline{V} = \widehat{V}$.
\end{corollary}
\begin{proof}
As $\bar{\eps}$ covers $\eps$ and $\teps$ is the relatively universal cover of $\eps$, the embedding $\teps$ also covers $\bar{\eps}$. Accordingly, $\overline{V} = \widehat{V}/K$ for some subspace $K$ of $W$ and $\bar{\eps}$ is a $G$-cover. Suppose by contradiction that $K \neq \{0\}$. Let $H$ be a hyperplane of $K$. Then $\widehat{V}/H = (K/H)\cdot\overline{V}$ is a central $1$-extension of $\overline{V}$ and hosts a $G$-cover of $\bar{\eps}$. By Lemma \ref{lemma 1.5} with $\overline{V}$ and $\bar{\eps}$ in the roles of $V$ and $\eps$ respectively and $\widehat{V}/H$ in the role of $V'$, we obtain that $\widehat{V}/H$ is a non-split $1$-extension of $\overline{V}$. However, under the hypotheses assumed on $G$ and $\overline{V}$, Theorem \ref{1-universal bis} states that $\overline{V}$ admits no non-split $1$-extension. Therefore $K = \{0\}$, namely $\overline{V} = \widehat{V}$.
\end{proof}

 \section{Proof of Lemma \ref{main lemma} and Theorem \ref{main theorem}}\label{Sec 3}

Henceforth $\Gamma := A_{n,\{1,n\}}(\KK)$, $G = \mathrm{SL}(n+1,\KK)$, $A$ is the adjoint module for $G$ described in the Introduction of this paper, $\neps:\Gamma\rightarrow\PG(A)$ and $\tneps:\Gamma\rightarrow\PG(\widehat{A})$ are respectively the natural embedding of $\Gamma$ and its relatively universal cover and $M$ is the kernel of the projection from $\tneps$ to $\neps$ (see Appendix~\ref{Appendix A.1}). We know that $\neps$ is $G$-homogeneous. Since the relatively universal cover of a $G$-homogeneous embedding is a $G$-cover, $\widehat{A} = M\cdot A$ is an extension of the $G$-module $A$ with $M$ as the kernel.

\subsection{Proof of Lemma \ref{main lemma}}\label{sec 3.1}

The next proposition is a generalization of Theorem (1.4) of \cite{SV}.

\begin{proposition}\label{lemma A}
$\widehat{A}=M\cdot A$ is a central extension of $A.$
\end{proposition}
\begin{proof}
Throughout the proof of this lemma we adopt the following notation. Given a subfield $\FF$ of $\KK$ and a set $X$ of vectors of a $\KK$-vector space $V$, we denote by $\langle X\rangle_\FF$ the $\FF$-span of $X$ in $V$, namely the set of all $\FF$-linear combinations of a finite
  number of vectors of $X$. The dimension $\dim_{\FF}(\langle X\rangle_\FF)$ of $\langle X\rangle_\FF$ is its dimension as an $\FF$-vector space. Chosen a basis $B$ of $V$,  we put $V(\FF) := \langle B\rangle_\FF$ and denote by $P(V(\FF))$ the projective geometry of $V(\FF)$, regarded as a subgeometry of $\PG(V)$, every $1$-dimensional subspace of $\langle B\rangle_\FF$ being identified with the $1$-dimensional subspace of $V$ containing it.

With $V = V(n+1, \KK)$, we denote by $\Gamma(\FF)$ the point-hyperplane geometry of $\PG(V(\FF)) \cong \PG(n,\FF)$, regarded as a subgeometry of $A_{n,\{1,n\}}(\KK)$. Put $G(\FF) := \mathrm{SL}(n+1, \FF)$ and let $A(\FF)$ be its adjoint module, these modules being nested in such a way that if $\FF\subseteq \FF'$ then $A(\FF') = \langle A(\FF)\rangle_{\FF'}$. Also $M(\FF):=\widehat{A}(\FF)\cap M$, where $\widehat{A}(\FF)$ is the $\KK$-span in $\widehat{A}$ of the set of vectors which represent the points of the  image $\tneps(\Gamma(\FF))$ of $\Gamma(\FF)$ in $\PG(\widehat{A})$ via $\tneps$.  Clearly, if $\FF \subseteq \FF'$ then  $\widehat{A}(\FF)\subseteq \widehat{A}(\FF')$, hence $M(\FF)\subseteq M(\FF')$.
Note also that, for every subfield $\FF$ of $\KK$, the projection of $\widehat{A}$ onto $A$ along $M$ maps $\widehat{A}(\FF)$ onto $A$. Indeed $\widehat{A}(\FF) \supseteq A(\FF)$ and $\langle A(\FF)\rangle_\KK = A$.

Let $\KK_0$ be the minimal subfield of $\KK$. Put $G_0:= G(\KK_0)$ for short.  By Cooperstein \cite{C98b} the geometry $\Gamma(\KK_0)$ admits a generating set X of size $n^2+2n = \dim(A)$. (Cooperstein mentions only finite prime fields in \cite{C98b}, but the results he obtains in that paper hold as well when $\KK_0$ is the field of rational numbers). Therefore, if we choose a representative vector for every point of $\tneps(X)$ we obtain a set $B_X$ of generators of $\widehat{A}(\KK_0)$. However $\dim(\widehat{A}(\KK_0)) \geq \dim(A) = n^2+2n$. Accordingly, $B_X$ is a basis of $\widehat{A}(\KK_0)$ and $\dim(\widehat{A}(\KK_0)) = \dim(A) =  n^2+2n$. As the projection of $\widehat{A}$ onto $A$ along $M$ maps $\widehat{A}(\KK_0)$ onto $A$, we obtain that $\widehat{A}(\KK_0)$ is a complement of $M$ in $\widehat{A}$; with no loss, we can also assume that $\widehat{A}(\KK_0) = A$. We have proved that $M(\KK_0) = \{0\}$.

For every subfield $\FF$ of $\KK$ the group $G_0$, being a subgroup of $G(\FF)$, stabilizes $M(\FF)$. Let $\Phi$ be the set of all subfields $\FF$ of $\KK$ such that the action of $G_0$ on the module $M(\FF)$ is trivial. The action of $G_0$ on $M(\KK_0)=\{0\}$ is obviously trivial. Hence $\Phi\neq\emptyset$. Let now $(\{\FF_i\}_{i\in I}, \subseteq)$ be a chain of subfields of $\KK$. For any $x\in M(\bigcup_{i\in I}\FF_i)$ there exists $i\in I$ such that $x\in M(\FF_i)$. It follows that $M(\bigcup_{i\in I}\FF_i)=\bigcup_{i\in I} M(\FF_i)$. Therefore, if $G_0$ acts trivially on $M(\FF_i)$ for every $i\in I$ then it also acts trivially on $M(\bigcup_{i\in I}\FF_i)$. Accordingly, every chain in $\Phi$ admits an upper bound in $\Phi$.  By Zorn's lemma $\Phi$ admits a maximal element, say $\KK_1$.

Proving the lemma amounts to prove that $\KK_1=\KK$. Suppose by contradiction that $\KK_1\subsetneq \KK$. Pick an element $\gamma\in\KK\setminus\KK_1$ and let $\KK_2:=\KK_1(\gamma)$ be the simple extension of $\KK_1$ by means of $\gamma$. In view of Blok and Pasini \cite[Corollary 4.8]{BPa01} the geometry $\Gamma(\KK_2)$ is generated by $\Gamma(\KK_1)$ and possibly one extra point $p \in \Gamma(\KK_2)\setminus\Gamma(\KK_1)$. Therefore $\widehat{A}(\KK_1)$ has codimension at most $1$ in $\widehat{A}(\KK _2)$.
However $\widehat{A}(\KK_2) =M(\KK_2)\cdot A$ and
$\widehat{A}(\KK_1) = M(\KK_1)\cdot A.$  It follows that $M(\KK_1)$ has codimension at most $1$ in $M(\KK_2)$. The group $G_0$ acts trivially on $M(\KK_1)$ by the choice of $\KK_1$. Therefore $M(\KK_1)\subsetneq M(\KK_2)$, since $\KK_1$ is maximal in $\Phi$. So, $M(\KK_1)$ has codimension $1$ in $M(\KK_2)$ and, chosen a vector $v\in M(\KK_2)\setminus M(\KK_1)$  , we have $M(\KK_2) = M(\KK_1)\cdot\langle v\rangle_\KK$ (as a $G_0$-module). Since $G_0$ acts trivially on  both $M(\KK_1)$ and $M(\KK_2)/M(\KK_1) \cong \langle v\rangle_\KK$, its action on  $M(\KK_2)$ can be only as follows:
   \begin{equation} \label{trac}
(x, tv)\cdot g = (tf(g)+x, t\lambda(g)v), \hspace{5 mm} \forall g\in G_0,~ \forall t\in \KK , ~\forall  x\in \langle M(\KK_1)\rangle ;
 \end{equation}
where $\lambda$ in a homomorphims from $G_0$ to the multiplicative group of $\KK$ and $f$ is a mapping from $G_0$ to the additive group of $M(\KK_1)$ such that
   \begin{equation} \label{trac bis}
f(g_1g_2) ~ = ~ f(g_1) + \lambda(g_1)f(g_2), \hspace{5 mm} \forall g_1, g_2\in G_0.
 \end{equation}
However $G_0$ is perfect. Hence $\lambda(g) = 1$ for every $g\in G_0$. Accordingly, \eqref{trac bis} boils down to the following:
\[f(g_1g_2) ~ = ~ f(g_1) + f(g_2), \hspace{5 mm} \forall g_1, g_2\in G_0.\]
So, $f$ is a homomorphism from $G_0$ to the additive group of $M(\KK_1)$. The latter is commutative while $G_0$ is perfect. Therefore $f(g) = 0$ for every $g\in G_0$. Formula \eqref{trac} now shows that $G_0$ centralizes $M(\KK_2)$, namely $\KK_2\in \Phi$. This conclusion contradicts the maximality of $\KK_1$ in $\Phi$.

Therefore $\KK=\KK_1\in\Phi$ and the action of $G_0$ on $M=M(\KK)$ is trivial. To conclude, observe that the group
$G:=\mathrm{SL}(n+1,\KK)$ does not admit any proper normal subgroup containing $G_0$. Indeed $G/Z(G)$ is simple.
Hence the action of $G$ on $M$ must also be trivial.
\end{proof}

As noticed in the Introduction, the group $G$ acts semi-flag-transitively on $\Gamma$. Moreover, it lifts to the adjoint module $A$ through $\neps$; hence $\tneps$, being the relatively universal cover of $\neps$, is a $G$-cover of $\neps$. Furthermore $\widehat{A} = M\cdot A$ is a central extension of $A$, as proved in Proposition \ref{lemma A}. Therefore $\widehat{A}$ is a totally non-split extension of $A$, by Lemma \ref{lemma 1.4}. In particular:
\begin{corollary}\label{A1}
The extension $\widehat{A}$ is $1$-non-split.
 \end{corollary}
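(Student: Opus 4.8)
The plan is to derive Corollary \ref{A1} as an immediate consequence of the chain of results just assembled, rather than to prove anything from scratch. The statement to be proved is that the extension $\widehat{A} = M\cdot A$ is $1$-non-split. By Proposition \ref{1-ext}, for a \emph{central} extension the properties ``totally non-split'' and ``$1$-non-split'' are equivalent, so it suffices to verify that $\widehat{A}$ is a totally non-split central extension of $A$.

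First I would collect the three hypotheses needed to invoke Lemma \ref{lemma 1.4}. The geometry $\Gamma = A_{n,\{1,n\}}(\KK)$ is connected and $G = \mathrm{SL}(n+1,\KK)$ acts on it as a group of automorphisms; as recalled in the Introduction, $G$ acts transitively on the set of points of $\Gamma$ (indeed semi-flag-transitively). Next, since $\neps$ is $G$-homogeneous and $\tneps$ is its relatively universal cover, $\tneps$ is a $G$-cover of $\neps$, so $\widehat{A} = M\cdot A$ is genuinely an extension of the $G$-module $A$ with kernel $M$, and $\pi$ is a morphism of $G$-modules. Finally, Proposition \ref{lemma A} establishes that $G$ acts trivially on $M$, i.e.\ the extension is central.

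With these three facts in place, Lemma \ref{lemma 1.4} applies directly: a central $G$-cover of $\eps$ over a point-transitive $G$ yields a totally non-split extension. Hence $\widehat{A}$ is totally non-split. Then Proposition \ref{1-ext}, applied to this central extension, converts ``totally non-split'' into ``$1$-non-split'', which is exactly the assertion of Corollary \ref{A1}. There is essentially no calculation to carry out here; the work has already been done in Lemma \ref{lemma 1.4} (whose proof uses point-transitivity together with the collinearity argument showing that a splitting complement would force $\neps'(\Gamma)$ to span only a proper subspace) and in Proposition \ref{lemma A} (the Zorn's-lemma/perfectness argument giving triviality of the $G$-action on $M$).

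The only point requiring a word of care is that the statement of Lemma \ref{lemma 1.4} is phrased for an abstract $\eps':\Gamma\rightarrow\PG(V')$; so I would make explicit the identification $V' = \widehat{A}$, $V = A$, $W = M$, $\eps' = \tneps$ and $\eps = \neps$, and cite the preceding paragraph's verification that these satisfy the hypotheses (central extension, $G$-cover, point-transitivity of $G$). Since this reduction is entirely formal, I do not anticipate any genuine obstacle; the corollary is simply the specialization of the general machinery of Section \ref{Sec 2} to the concrete situation of the adjoint module, once Proposition \ref{lemma A} has secured centrality.
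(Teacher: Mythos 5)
Your proposal is correct and follows exactly the paper's own route: the paper likewise notes that $G$ acts semi-flag-transitively on $\Gamma$, that $\tneps$ is a $G$-cover of $\neps$, and that $\widehat{A}=M\cdot A$ is central by Proposition \ref{lemma A}, then applies Lemma \ref{lemma 1.4} to get total non-splitness, of which $1$-non-splitness is the immediate (trivial) consequence recorded in Corollary \ref{A1}. No discrepancies to report.
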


Before proceeding further, we need to recall a construction of the dual $A^*$ of $A$ as a quotient of the space $M_{n+1}(\KK)$ of all square matrices of order $n+1$ with entries in $\KK$ and give some information on the isomorphism $H^1(A^*,G)\cong \Der(\KK)$.

The dual module $A^*$ of $A$ is isomorphic in a standard way to the $G$-module $M_{n+1}(\KK)/\langle I\rangle$, where $\langle I\rangle$ is the space of all scalar matrices, namely the $1$-dimensional subspace of $M_{n+1}(\KK)$ generated by the identity matrix $I$. For every $\alpha \in A^*$ there exists a unique coset $\mu(\alpha) \in M_{n+1}(\KK)/\langle I\rangle$ such that
\begin{equation}\label{Astar-M/I}
\alpha(a) ~ = ~ \mathrm{Tr}(b\cdot a), \hspace{5 mm} \forall b\in \mu(\alpha), ~ \forall a\in A.
\end{equation}
(Of course, $b\cdot a$ is the usual row-by-column product of $b$ and $a$ and $\mathrm{Tr}(b\cdot a)$ is the trace of the matrix $b\cdot a$). The mapping $\mu$ defined as above is the standard isomorphism from $A^*$ to $M_{n+1}(\KK)/\langle I\rangle$. So, we may take the liberty of regarding $A^*$ and $M_{n+1}(\KK)/\langle I\rangle$ as the same objects, thus restating~\eqref{Astar-M/I} as follows:
\begin{equation}\label{Astar-M/I bis}
\alpha(a) ~ = ~ \mathrm{Tr}(\rho(\alpha)\cdot a), \hspace{5 mm} \forall \alpha\in A^*, ~ \forall a\in A,
\end{equation}
where $\rho(\alpha)$ stands for a chosen representative of $\mu(\alpha)$, the one we like. Henceforth we shall conform to this setting.

The isomorphism $\Der(\KK) \cong H^1(A^*,G)$ can be explained as follows (see Appendix \ref{appendix C}, Theorem \ref{C1}). For every derivation $d\in \Der(\KK)$ let $f_d:G\rightarrow A^*$ be defined as follows:
\begin{equation}\label{iso der H}
f_d(g)(a) ~:= ~ \mathrm{Tr}(g^{-1}\cdot d(g)\cdot a), \hspace{5 mm} \forall g\in G, ~ \forall a\in A,
\end{equation}
where $g$ is regarded as a matrix, say $g = (g_{i,j})_{i,j=1}^{n+1}$, and $d(g) := (d(g_{i,j}))_{i,j=1}^{n-1}$. As $d(I)$ is the null matrix, the mapping $f_d$ is a $1$-cochain of $G$ over $A^*$.

\begin{claim}\label{claim}
The mapping $f_d$ is a $1$-cocycle of $G$ over $A^*$. It is a coboundary precisely when $d$ is the null derivation.
\end{claim}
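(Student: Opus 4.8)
The plan is to verify directly that $f_d$ satisfies the cocycle identity \eqref{basic formula}, and then to identify precisely when it is a coboundary. For the cocycle condition, I would compute $f_d(g_1 g_2)$ and compare it with $f_d(g_1)\cdot g_2 + f_d(g_2)$ using the product rule for the derivation $d$ applied entrywise to matrices, namely $d(g_1 g_2) = d(g_1)\cdot g_2 + g_1\cdot d(g_2)$ (which holds entrywise because $d$ is a derivation of $\KK$). The key computation is
\begin{equation}\label{myplan1}
f_d(g_1 g_2)(a) = \mathrm{Tr}((g_1 g_2)^{-1}\cdot d(g_1 g_2)\cdot a) = \mathrm{Tr}(g_2^{-1}g_1^{-1}(d(g_1)g_2 + g_1 d(g_2))a).
\end{equation}
Splitting the right-hand side into two traces and using the cyclic invariance of the trace together with the definition \eqref{GsuV*} of the $G$-action on $A^*$ (so that $f_d(g_1)\cdot g_2$ evaluated at $a$ is $f_d(g_1)(a\cdot g_2^{-1}) = \mathrm{Tr}(g_1^{-1}d(g_1)\cdot g_2 a g_2^{-1})$), I expect the first summand to yield $f_d(g_1)\cdot g_2$ and the second to yield $f_d(g_2)$, confirming \eqref{basic formula}. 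I would also note $f_d(1)=0$ since $d(I)$ is the null matrix.

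For the second assertion, I would argue in both directions. If $d$ is the null derivation then $d(g)$ is the null matrix for every $g$, so $f_d \equiv 0$, which is trivially a coboundary. Conversely, suppose $f_d \in B^1(A^*, G)$, say $f_d(g) = \xi\cdot g - \xi$ for some fixed $\xi \in A^*$. Writing $\xi = \mu(\alpha)$ via the isomorphism \eqref{Astar-M/I bis} with representative matrix $b := \rho(\xi)$, the coboundary condition reads
\begin{equation}\label{myplan2}
\mathrm{Tr}(g^{-1}d(g)\cdot a) = \mathrm{Tr}(b\cdot a\cdot g^{-1})\cdot\mathrm{--}\,\text{terms},
\end{equation}
so I would instead phrase it cleanly as $\mathrm{Tr}(g^{-1}d(g)\,a) = \mathrm{Tr}(b\,(a g^{-1} - a)g\cdots)$; rather than push symbols, the honest plan is to unwind $\xi\cdot g - \xi$ at $a$ to get $\mathrm{Tr}(g^{-1}d(g)a) = \mathrm{Tr}(b\cdot a\cdot g^{-1} - b\cdot a)$ for all traceless $a$, i.e. $\mathrm{Tr}((g^{-1}d(g) - (g b g^{-1} - b))\,a) = 0$ for all $a \in A$. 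Since this holds for all traceless $a$, the matrix $g^{-1}d(g) - (gbg^{-1}-b)$ must be scalar for every $g$.

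The main obstacle will be extracting from this scalar-matrix condition the conclusion that $d=0$. My plan is to specialize $g$ to suitable one-parameter subgroups where $d$ acts transparently. Taking $g$ to be a diagonal matrix with a single transcendental or inseparable entry $t$, the quantity $g^{-1}d(g)$ becomes diagonal with entries of the form $t^{-1}d(t)$, and the scalar-matrix constraint forces these logarithmic-derivative entries to agree up to the commutator correction $gbg^{-1}-b$; pushing $g$ through a family of such diagonal and elementary (transvection) matrices, I expect to isolate $d(t)=0$ for every $t\in\KK$, forcing $d$ to vanish. The delicate point is organizing the choices of $g$ so that the $b$-dependent term is controlled and does not absorb the derivation; here I would lean on the rigidity of the adjoint module and the fact that $G$ is generated by transvections. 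Alternatively, and perhaps more cleanly, I would invoke the theorem of V\"olklein \cite{V} (cited in the excerpt) which establishes $H^1(A^*,G)\cong\Der(\KK)$ as an isomorphism of vector spaces, under which the class $[f_d]$ corresponds to $d$; then $[f_d]=0$ is equivalent to $d=0$ by injectivity. Since the excerpt explicitly permits appealing to \cite{V} for this isomorphism, the quickest rigorous route is to identify the displayed map $d\mapsto [f_d]$ with that known isomorphism and conclude.
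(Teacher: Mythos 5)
Your verification of the cocycle identity is correct and is exactly the paper's argument: apply $d(g_1g_2)=d(g_1)g_2+g_1d(g_2)$ entrywise, split the trace, and use cyclicity together with $a\cdot g_2^{-1}=g_2ag_2^{-1}$ to recognize the two summands as $f_d(g_1)\cdot g_2$ and $f_d(g_2)$. The forward half of the second assertion ($d=0$ implies $f_d\equiv 0$) is also fine.

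The converse is where your proposal has a genuine gap. You correctly reduce the coboundary hypothesis to the statement that $g^{-1}d(g)-(g^{-1}bg-b)$ is a scalar matrix for every $g$ (note it is $g^{-1}bg$, not $gbg^{-1}$, with the paper's right-action conventions, though this does not affect the substance), but you then explicitly leave unresolved ``the delicate point'' of preventing the unknown $b$ from absorbing the derivation term -- and that is precisely the content of the claim. The step is not hopeless along your own lines: for \emph{diagonal} $g$ the matrix $g^{-1}bg-b$ has zero diagonal, so comparing the first and third diagonal entries of the scalar condition for $g=\mathrm{diag}(t,t^{-1},1,\dots,1)$ (here $n+1\geq 3$ is used) gives $t^{-1}d(t)=0$ directly; had you observed this, your argument would close. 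The paper instead eliminates the unknown functional \emph{before} any matrix computation: it evaluates the coboundary identity $f_d(g)(a)=\alpha(a)-\alpha(gag^{-1})$ at $a=g-g'$ where $g'\in C_G(g)$ and $\mathrm{Tr}(g')=\mathrm{Tr}(g)$, so that $gag^{-1}=a$ and the right-hand side vanishes identically; a concrete pair of commuting diagonal matrices then yields $(2t^2-t-1)\,d(t)=0$, whence $d(t)=0$ since the roots of $2t^2-t-1$ are algebraic over the prime field. Finally, your fallback of citing V\"olklein's theorem for the injectivity of $d\mapsto[f_d]$ is sanctioned by the paper's own remark preceding the claim, but it amounts to declining to prove the statement rather than proving it, so it cannot substitute for closing the gap above.
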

\begin{proof}
Recalling that $d(g_1g_2)  = d(g_1)g_2 + g_1d(g_2)$ for any choice of $g_1, g_2\in G$ and with the help of well known properties of the trace operator, it is straightforward to check that $f_d$ satisfies \eqref{basic formula}:
\[f_d(g_1g_2) ~ = ~ f_d(g_1)\cdot g_2 + f_d(g_2), \hspace{5 mm} \forall g_1, g_2\in G.\]
 So, $f_d\in Z^1(A^*,G)$. Turning to the second part of the claim, let $f_d\in B^1(A^*,G)$. So, there exists $\alpha\in A^*$ such that $f_d(g)(a) = \alpha(a) - \alpha(gag^{-1})$ for every $a\in A$. In particular, with $g'\in C_G(g)$ such that $\Tr(g') = \Tr(g)$ and $a = g-g'$ we have $f_d(g)(g-g') = 0.$ By~\eqref{iso der H} we obtain
\begin{equation}\label{claim eq}
\Tr(g^{-1}d(g)g) ~ = ~ \Tr(g^{-1}d(g)g').
\end{equation}
For $t\in \KK\setminus\{0\}$, let $g$ and $g'$ be the diagonal matrices of order $n+1$ with $(t, t^{-1}, 1, 1,..., 1)$ and respectively $(1, t, t^{-1}, 1,..., 1)$ as the $(n+1)$-tuples of diagonal entries. So $gg' = g'g$ and $\Tr(g) = \Tr(g')$. With $g$ and $g'$ chosen in this way we have
\[\Tr(g^{-1}d(g)g) = \Tr(d(g)) =  (1-t^{-2})d(t) ~\mbox{ and } ~ \Tr(g^{-1}d(g)g') =  (t^{-1}-1)d(t).\]
Therefore $(2-t^{-2}-t^{-1})d(t) = 0$ by \eqref{claim eq} and the above. Hence either $d(t) =0$ or $2t^2- t-1 = 0$. However $d(t) = 0$ in the latter case too, since in that case $t$ is algebraic over the minimal subfield of $\KK$. So, $d(t) = 0$ in any case, namely $d =0$.
\end{proof}

Clearly, the mapping $\delta:\Der(\KK)\rightarrow Z^1(A^*,G)/B^1(A^*,G)$ which maps every $d\in \Der(\KK)$ onto the coset $f_d+B^1(A^*,G)$ is a homomorphism of vector spaces. By Claim \ref{claim}, the homomorphism $\delta$ is injective and maps $\Der(\KK)$ onto a subspace of $H^1(A^*,G)$. Actually, $\delta$ maps $\Der(\KK)$ onto $H^1(A^*,G)$ (Appendix \ref{appendix C}, Theorem \ref{C1}). Therefore $\Der(\KK) \cong H^1(A^*,G)$ and $\delta$ provides an isomorphism from $\Der(\KK)$ to $H^1(A^*,G)$, the canonical one.

We are now ready to prove our next proposition. Our proof is actually a generalization of an argument used by Smith and V\"{o}lklein in their proof of Lemma (2.4) of \cite{SV}.

\begin{proposition} \label{lemma C}
Every non-split $1$-extension of $A$ admits $\neps$-points very well-stabilized in $G.$
\end{proposition}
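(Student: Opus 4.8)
The plan is to exhibit, for an arbitrary non-split $1$-extension $A' = K \cdot A$ of $A$ (with $\dim(K) = 1$), an explicit $\neps$-point that is very well-stabilized in the sense of Definition \ref{very well}. By Remark \ref{Gperf-1-et} the extension $A'$ is automatically central, since $G = \mathrm{SL}(n+1,\KK)$ is perfect; so by Theorem \ref{thm link} the module $A'$ is determined by a cocycle in $Z^1(A^*,G)$, and by the isomorphism $\delta:\Der(\KK) \to H^1(A^*,G)$ discussed above we may take a representative cocycle of the form $f_d$ as in \eqref{iso der H} for some derivation $d \in \Der(\KK)$ (modulo a coboundary, which only shifts the splitting and does not affect stabilizers). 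Concretely I would realize $A'$ on $\KK \times A$ with $G$ acting by $g:(t,a) \mapsto (t + \mathrm{Tr}(g^{-1}d(g)a),\, g^{-1}ag)$.

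First I would fix a convenient base $\neps$-point. The natural embedding sends the flag $(p,H)$ to $\langle \alpha \otimes v\rangle$ where $v$ represents $p$ and $\alpha \in V^*$ represents $H$; a rank-one traceless matrix $a_0 = \alpha \otimes v$ with $\alpha(v) = 0$ is the natural candidate. I would choose the standard flag, say $v = e_1$ and $\alpha = e_{n+1}^*$, so that $a_0 = e_{1,n+1}$ in matrix terms (using the notation $e_{i,j}$ from the proof of Claim \ref{claim}). The stabilizer $G_{a_0}$ of the corresponding point of $\PG(A)$ is the parabolic-type subgroup fixing the point-hyperplane flag, and I must select a representative $p'_0 = \langle (t_0, a_0)\rangle$ in $\PG(A')$ above it. The key step is to show $p'_0$ is \emph{well-stabilized}, i.e. $G_{p'_0} = G_{a_0}$: every $g \in G_{a_0}$ satisfies $g^{-1}a_0 g = \chi(g)a_0$ for a scalar $\chi(g)$, and I would verify that the companion term $\mathrm{Tr}(g^{-1}d(g)a_0)$ is compatible, so that $g$ can be made to fix the line $\langle(t_0,a_0)\rangle$ — for a suitable choice of $t_0$ this should amount to checking that the relevant additive cocycle restricted to $G_{a_0}$ is a coboundary, which holds because $G_{a_0}$ stabilizes the flag and the restriction of $f_d$ trivializes there.

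Next I would address the $\eps$-lines through $p'_0$. Through $\neps(p_0)$ there pass two families of lines of $\Gamma$ (the $\ell_{p,S}$ and $\ell_{L,H}$ types). For each such line I must produce, inside $\PG(A')$, a well-stabilized $\eps$-line projecting onto it, and arrange that $G_{p'_0}$ permutes the chosen set $L(p'_0)$ the same way it permutes the lines of $\eps(\Gamma)$ through $\neps(p_0)$. Here the transitivity facts recalled in the Introduction are crucial: the stabilizer of the base point in $G$ acts transitively on the lines of each family through it, so it suffices to pin down one well-stabilized $\eps$-line in each family and then transport it by $G_{p'_0}$, obtaining $L(p'_0)$ automatically stabilized by $G_{p'_0}$. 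For a fixed line $\ell_0$ I would take the span in $A'$ of two lifts $(t_0,a_0)$ and $(t_1,a_1)$ of two distinct $\neps$-points of $\ell_0$ and show that its stabilizer equals $G_{\ell_0}$, again by checking the cocycle term does not enlarge the stabilizer.

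The main obstacle I anticipate is the well-stabilization verification for the lines rather than the point: I must rule out that the extra $\KK$-coordinate creates elements of $G$ fixing a lifted line setwise without fixing the projected line, which comes down to a careful computation of $\mathrm{Tr}(g^{-1}d(g)a)$ for $g$ in the line stabilizer and $a$ ranging over the two-dimensional space of the line, showing the associated linear system forces $g \in G_{\ell_0}$. I would organize this by reducing, via the flag-stabilizer transitivity, to the single standard line in each family and then to an explicit Levi-and-unipotent coordinate computation, exactly paralleling the rank-one trace computation already carried out in Claim \ref{claim}; the derivation-theoretic input (that a derivation killing all algebraic elements cannot be compensated by the scalar action) is what ultimately guarantees that no spurious stabilizing elements appear.
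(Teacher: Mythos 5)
Your plan is essentially the paper's own proof: realize the central $1$-extension on $\KK\times A$ via the cocycle attached to a derivation $d$, take the rank-one matrix of the standard point--hyperplane flag as the base $\neps$-point, check that the trace term vanishes identically on its stabilizer so that the lift over $t_0=0$ is well stabilized, do the same for one lifted line in each of the two families through that point, and then transport these two lines by the point stabilizer, whose transitivity on each family yields the set $L(p'_0)$ of Definition \ref{very well}. One concrete slip is worth naming: the action you write down, $(t,a)\mapsto(t+\Tr(g^{-1}d(g)a),\,g^{-1}ag)$, does not define a $G$-action --- the identity \eqref{Phi(v,g)} fails for it; the correct term is $\phi_d(a,g)=f_d(g^{-1})(a)=\Tr(g\cdot d(g^{-1})\cdot a)$ as in \eqref{phi der}. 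This is precisely the $g^{-1}d(g)$ versus $g\,d(g^{-1})$ confusion of Smith--V\"{o}lklein that the paper corrects (Remark \ref{rem1}); as the paper observes, it happens to be harmless here, since for $g$ stabilizing the line spanned by the flag matrix both expressions reduce to the trace of a scalar multiple of that (traceless) matrix, hence to $0$. Your reformulation of well-stabilization of the point as ``the restricted additive cocycle $g\mapsto\phi(a_0,g)$ is a coboundary for the character $\chi$'' is accurate, but the reason you offer (``the restriction of $f_d$ trivializes there'') is exactly the assertion to be verified; the actual content is the short matrix computation showing $g\cdot d_{}(g^{-1})\cdot e_{n+1,1}\in\langle e_{n+1,1}\rangle$ for $g$ in the flag stabilizer, and its analogue for the two chosen lines. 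Neither route buys anything over the other; yours is the paper's argument with the key computations deferred.
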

\begin{proof}
Let $V:=\KK\cdot A$ be a $1$-extension of $A$, necessarily central since $G$ is perfect. With the vector space $V$ regarded as the direct product $V = \KK\times A$ of $\KK$ and $A$, the group $G$ acts as follows on $V$:
\[ (t,a)\in \KK\cdot A \mapsto (t+\phi(a,g), a\cdot g)\in \KK\cdot A, \]
where, in view of the proof of Theorem~\ref{thm link}, $\phi(\blank, g) = f_\phi(g^{-1})$ for a $1$-cocycle $f_\phi\in Z^1(A^*,G)$, with $f_\phi\in B^1(A^*,G)$ if and only if $V$ splits. So, assuming that $V$ does not split, we have $f_\phi \in Z^1(A^*,G)\setminus B^1(A^*,G)$. The value $\phi(a,g)$ taken by $\phi(\blank, g)$ at $a\in A$ is thus the value taken by $f_\phi(g^{-1})$ at $a$.

Let $[f_\phi]$ be the element of $H^1(A^*,G)$ represented by $f_\phi$. Replacing $f_\phi$ with another representative $f$ of $[f_\phi]$ amounts to replace $\phi$ with $\phi_f$, hence $V$ with a rigidly isomorphic copy of $V$. So, if $d$ is the derivation corresponding to the element $[f_\phi]$ of $H^1(A^*,G)$, up to isomorphism, we can choose $V$ so that
$f_\phi = f_d$. With this choice, $\phi = \phi_d$ where
\begin{equation}\label{phi der}
\phi_d(a, g) ~ = ~ f_d(g^{-1})(a) ~= ~ \mathrm{Tr}(g\cdot d(g^{-1})\cdot a), \hspace{5 mm} \forall g\in G, ~\forall a\in A.
\end{equation}
Given $a\in A$, let $G_{A(a)}$ be the stabilizer in $G$ of the $1$-space $A(a) := \langle a\rangle_A$ in the action of $G$ on $A$ and  $G_{V(a)}$ the stabilizer of $V(a) := \langle (0, a)\rangle_V$ in the action of $G$ on $V$. As we know, $G_{V(a)} \subseteq G_{A(a)}$. We shall prove that there exist vectors $a \in A$ which represent points $A(a)\in\neps(\Gamma)$ such that $G_{V(a)} = G_{A(a)}.$

Take as $a$ the matrix $e_{n+1,1}$ all of whose entries are null except for the entry $(n+1,1)$,
which is equal to $1.$ This matrix represents the point $\neps(p_0, H_0) \in \neps(\Gamma)$ where $p_0 \in \PG(n,\KK)$ is represented by $e_1 = (1,0,\dots,0)$ and the hyperplane $H_0$ of $\PG(n,\KK)$ is represented by the equation $x_{n+1} = 0$. The stabilizer $G_{A(e_{n+1,1})}$ of the point $A(e_{n+1,1})$ consists of the matrices $(g_{i,j})_{i,j=1}^{n+1} \in G$ with $g_{1,j} = g_{i,n+1} =0$ for $j > 1$ and $i \leq n$. If $g$ is one of these matrices then $g\cdot d(g^{-1})\cdot e_{n+1,1} = te_{n+1,1}$ for some $t\in \KK$. Consequently, with $\phi$ chosen as in \eqref{phi der}, we have
\[\phi_d(e_{n+1,1},g) ~ = ~ \mathrm{Tr}(g\cdot d(g^{-1})\cdot e_{n+1,1}) ~ = ~ 0, \hspace{5 mm} \forall g \in G_{A(e_{n+1,1})},\]
which implies that $G_{V(e_{n+1,1})} = G_{A(e_{n+1,1})}$. So far, we have proved that the point $V(e_{n+1, 1})$ is well-stabilized in $G$.

Let $\ell_V$ and $\ell'_V$ be the lines of $\PG(V)$ spanned by $(0, e_{n+1,1})$ and $(0, e_{n,1})$, respectively $(0, e_{n+1,1})$ and $(0, e_{n+1,2})$. The projection of $V$ onto $A$ maps $\ell_V$ and $\ell'_V$ onto the lines $\ell_A$ and $\ell'_A$ of $\neps(\Gamma)$ spanned by $e_{n+1,1}$ and respectively $e_{n,1}$ and $e_{n+1,2}$. Hence both $\ell_V$ and $\ell'_V$ are $\neps$-lines. An argument similar to that
exploited in the previous paragraph shows that both $\ell_V$ and $\ell'_V$ are well-stabilized.

Recall that $\Gamma$ admits two families of lines and, given a point $(p_0,H_0) \in \Gamma$, the stabilizer of $(p_0,H_0)$ in $G$ permutes transitively the lines through $(p_0,H_0)$ in each of the two families. The lines $\ell_A$ and $\ell'_A$ correspond to lines of $\Gamma$ in different families. Hence, with $G_0 := G_{A(e_{n+1,1})}$, the set $L_A := \{\ell_A\cdot g, \ell'_A\cdot g\}_{g\in G_0}$ is the full set of lines of $\neps(\Gamma)$ through $A(e_{n+1, 1})$. Put $L_V := \{\ell_V\cdot g, \ell'_V\cdot g\}_{g\in G_0}$. This is a proper subset of the set of  lines of $\PG(V)$ through $V(e_{n+1,1})$. Each of the lines in $L_V$ is well-stabilized, since it is the image by an element of $G_0$ of $\ell_V$ or $\ell'_V$, which are both well-stabilized. The projection of $V$ onto $A$ bijectively maps $L_V$ onto $L_A$. Ultimately, we have proved that $V(e_{n+1,1})$ is very well-stabilized.
\end{proof}

\begin{remark}
\em
The error mentioned in Remark \ref{rem1}, which amounts to take \eqref{iso der H} for \eqref{phi der}, spoils also the proof which Smith and V\"{o}lklein give for their Lemma 2.4, but in that context that error has no effect.
\end{remark}

\begin{corollary}\label{A2}
The extension $\widehat{A} = M\cdot A$ is $1$-universal.
\end{corollary}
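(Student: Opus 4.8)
The plan is to obtain this corollary as a direct instance of Theorem \ref{1-universal}, claim 2. That theorem asserts the $1$-universality of the relatively universal cover $\widehat{V}$ of a $G$-homogeneous embedding $\eps:\Gamma\rightarrow\PG(V)$ under three hypotheses: that $G$ acts trivially on the kernel $W$ of the projection, that $G$ acts semi-flag-transitively on $\Gamma$, and that every non-split $1$-extension of $V$ contains at least one $\eps$-point very well stabilized in $G$. I would specialize that statement to the present situation, taking $V = A$, $\widehat{V} = \widehat{A}$, $W = M$, $\eps = \neps$ and $\teps = \tneps$; these objects match the ``$\teps$, $\widehat{V}$ and $W$ as above'' of Theorem \ref{1-universal} because $\tneps$ is the relatively universal cover of $\neps$ and $M$ is by definition the kernel of its projection.

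It then remains only to verify that the three hypotheses hold in our setting. First, $G$ acts trivially on $M$ by Proposition \ref{lemma A}, so $\widehat{A} = M\cdot A$ is a central extension. Second, as recalled in the Introduction (and noted again just before Corollary \ref{A1}), the group $G = \mathrm{SL}(n+1,\KK)$ acts semi-flag-transitively on $\Gamma = A_{n,\{1,n\}}(\KK)$: it is point-transitive and the stabilizer of any line is transitive on the points of that line. Third, Proposition \ref{lemma C} establishes precisely that every non-split $1$-extension of $A$ admits an $\neps$-point very well stabilized in $G$.

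With all three hypotheses in place, Theorem \ref{1-universal}(2) applies verbatim and yields that the extension $\widehat{A} = M\cdot A$ is $1$-universal, which is the assertion of the corollary.

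Since every ingredient has already been assembled, I do not anticipate a genuine obstacle here: the substantive work has been discharged in Proposition \ref{lemma C} (via the explicit choice of the matrix $e_{n+1,1}$ together with its two incident $\neps$-lines, giving a very well stabilized point) and in the general module-theoretic machinery underlying Theorem \ref{1-universal}. The only point I would take care to check is the correct matching of roles between the abstract statement and the concrete objects, but this is immediate from the running conventions of the section.
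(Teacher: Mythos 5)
Your proposal is correct and follows exactly the paper's own proof, which likewise derives the corollary from Propositions \ref{lemma A} and \ref{lemma C} together with Theorem \ref{1-universal}. The verification of the three hypotheses is just as you describe, so nothing further is needed.
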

\begin{proof}
This follows from Propositions \ref{lemma A} and \ref{lemma C} and Theorem \ref{1-universal}.
\end{proof}

Corollaries \ref{A1} and \ref{A2} together with Proposition \ref{lemma A} and Theorem \ref{M*} (which can be applied since $A$ is rigid, as stated in Claim \ref{rigid res}) yield the following, which completes the proof of Lemma \ref{main lemma}.

\begin{proposition}\label{A3}
We have $M^* \cong H^1(A^*,G)$.
\end{proposition}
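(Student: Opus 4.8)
The plan is to obtain Proposition~\ref{A3} as a direct instance of Theorem~\ref{M*}, applied to the central extension $\widehat{A} = M\cdot A$ with $U = \widehat{A}$, $V = A$ and $W = M$. Theorem~\ref{M*} asserts that, for a rigid $G$-module $V$, any central extension $U = W\cdot V$ which is simultaneously $1$-non-split and $1$-universal satisfies $H^1(V^*,G)\cong W^*$. So the whole task reduces to checking that the hypotheses of that theorem are met in the present situation, after which the conclusion is immediate.

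First I would record that $A$, being the adjoint $\mathrm{SL}(n+1,\KK)$-module, is rigid; this is noted at the beginning of Subsection~\ref{M*sec} and is precisely the standing hypothesis under which Theorem~\ref{M*} is stated. Next, the extension $\widehat{A} = M\cdot A$ is central by Proposition~\ref{lemma A}, since $G$ acts trivially on the kernel $M$. The remaining two ingredients are exactly the content of the two corollaries just established: Corollary~\ref{A1} guarantees that $\widehat{A}$ is $1$-non-split, and Corollary~\ref{A2} guarantees that it is $1$-universal. With these verifications in hand, Theorem~\ref{M*} applies verbatim and yields $H^1(A^*,G)\cong M^*$, which is the assertion of Proposition~\ref{A3} (read in the opposite direction, $M^*\cong H^1(A^*,G)$).

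I do not expect any genuine obstacle at this stage: all the substantive work has already been carried out in the preceding results. The delicate points --- establishing centrality through the Zorn's-lemma argument of Proposition~\ref{lemma A}, the $1$-non-splitness coming from total non-splitness via Lemma~\ref{lemma 1.4}, and the $1$-universality produced by the very-well-stabilized $\neps$-points of Proposition~\ref{lemma C} feeding into Theorem~\ref{1-universal} --- are all already in place. Consequently the proof of Proposition~\ref{A3} is essentially a matter of assembling these facts and invoking Theorem~\ref{M*}. The only care required is to confirm that the rigidity hypothesis on $V = A$, which is what makes Lemma~\ref{nuovo1} (and hence the bijection between hyperplanes of $W$ and points of $\PG(H^1(A^*,G))$ underlying Theorem~\ref{M*}) go through, is indeed satisfied; as noted above, this holds for the adjoint module.
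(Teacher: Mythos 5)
Your proposal is correct and coincides with the paper's own proof: the authors likewise obtain Proposition~\ref{A3} by applying Theorem~\ref{M*} to the central extension $\widehat{A}=M\cdot A$, citing Proposition~\ref{lemma A} for centrality, Corollaries~\ref{A1} and~\ref{A2} for $1$-non-splitness and $1$-universality, and the rigidity of the adjoint module noted at the start of Subsection~\ref{M*sec}. No differences worth noting.
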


\begin{remark}
\em
The isomorphism $M^* \cong H^1(A^*,G)$ has also been obtained by Smith and V\"{o}lklein \cite{SV} in case $n = 2$, but in a way quite different from ours. Their proof heavily relies on homological algebra. We have preferred a more direct approach.
\end{remark}

\subsection{Proof of Theorem \ref{main theorem}}

Let $\overline{A}$ be the extension of $A$ defined as in \eqref{ovA}. In \eqref{ovA} the kernel of this extension is denoted by $M$, however this can be confusing since in the previous subsection $M$ stood for the kernel of $\widehat{A}$ which, as far as we know at this stage, might be different from the kernel of $\overline{A}$
when $\dim(\Der(\KK))$ is infinite.
In order to avoid any confusion, we denote the kernel of $\overline{A}$ by $M'$.  So, $M'$ is equal to the subspace $\Der_\Omega(\KK)$ of $\Der(\KK)$ spanned by $\{d_\omega\}_{\omega\in\Omega}$ for a given derivation basis $\Omega$ of $\KK$.

The mappings from $\Omega$ to $\KK$ bijectively correspond to the derivations of $\KK$. Explicitly, a mapping $\nu:\Omega\rightarrow\KK$ corresponds to the derivation $d_\nu$ described by the following formal sum:
\[d_\nu ~ = ~ \sum_{\omega\in\Omega}\nu(\omega)d_\omega.\]
This sum does make sense since, in view of Proposition \ref{proposition B2}, for every $k\in \KK$ only a finite number of summands occur in the sum $d_\nu(k) = \sum_{\omega\in \Omega}\nu(\omega)d_\omega(k)$. On the other hand, $\nu$ also defines a linear functional $\lambda_\nu$ of $M' = \Der_{\Omega}(\KK)$, uniquely determined by the condition $\lambda_\nu(d_\omega) = \nu(\omega)$ for every $\omega\in\Omega$. Accordingly, the derivations of $\KK$ bijectively correspond to the linear functionals of $M'$. If $d$ corresponds to the linear functional $\lambda$, then for every $a\in A$ we have
\begin{equation}\label{formal sum1}
\mathrm{Tr}(g\cdot d(g^{-1})\cdot a) ~ = ~ \lambda(\sum_{\omega\in\Omega}\mathrm{Tr}(g\cdot d_\omega(g^{-1})\cdot a)d_\omega) ~=
~\sum_{\omega\in\Omega}\Tr(g\cdot d_\omega(g^{-1})\cdot a)\lambda(d_\omega).
\end{equation}
With this premise, we turn to $\overline{A}$. The extension $\overline{A}$ is obviously central.  Moreover,

\begin{lemma}\label{barA1}
The extension $\overline{A}$ is both $1$-universal and $1$-non-split.
\end{lemma}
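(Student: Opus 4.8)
The plan is to establish the two properties of $\overline{A}$ separately, leveraging the cohomological machinery developed in Section~\ref{Sec 2} together with the explicit isomorphism $\Der(\KK)\cong H^1(A^*,G)$ recorded via the cocycles $f_d$ of \eqref{iso der H}. The guiding principle is that $\overline{A}$ is a central extension whose kernel $M' = \Der_\Omega(\KK)$ has dual $M'^* \cong \KK^\Omega \cong \Der(\KK)$, so I should match the hyperplanes of $M'$ against the non-split $1$-extensions of $A$, i.e. against the nonzero classes in $\PG(H^1(A^*,G))$, using Corollary~\ref{cor link} and the computation \eqref{formal sum1}.

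First I would prove $1$-universality. For every $\alpha\in M'^*\setminus\{0\}$, factoring $\overline{A}$ over the hyperplane $\ker(\alpha)$ of $M'$ yields a central $1$-extension of $A$ whose defining cocycle is, by the action \eqref{ovA} and formula \eqref{formal sum1}, exactly $\alpha\circ f_X$ in the sense of \eqref{alphaX}; concretely, if $\lambda$ is the linear functional of $M'$ corresponding to the derivation $d$ under the bijection described above, then the quotient extension has defining map $\phi(a,g)=\lambda\bigl(\sum_\omega \Tr(g\cdot d_\omega(g^{-1})\cdot a)d_\omega\bigr)=\Tr(g\cdot d(g^{-1})\cdot a)=\phi_d(a,g)$, precisely the cocycle $f_d$ of \eqref{phi der}. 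Since $\Der(\KK)\cong H^1(A^*,G)$ is realized by the $f_d$'s and every element of $H^1(A^*,G)$ arises this way, every non-split $1$-extension of $A$ is (up to isomorphism) obtained as such a quotient of $\overline{A}$ over a hyperplane of $M'$ stabilized by $G$ (automatic, since the extension is central). That is exactly $1$-universality.

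Next I would prove that $\overline{A}$ is $1$-non-split. By Proposition~\ref{1-ext} this is equivalent to total non-splitting for central extensions, but here the cleanest route is direct: for every hyperplane $H$ of $M'$ I must show $\overline{A}/H$ is a non-split $1$-extension of $A$. A hyperplane $H$ is the kernel of some nonzero $\alpha\in M'^*$, which corresponds as above to a \emph{nonzero} derivation $d$, hence to the cocycle $f_d$ which, by Claim~\ref{claim}, is \emph{not} a coboundary. Since $\overline{A}/H$ has defining cocycle $f_d\in Z^1(A^*,G)\setminus B^1(A^*,G)$, the splitting criterion established in the proof of Theorem~\ref{thm link} (namely $U_\phi$ splits iff $f_\phi\in B^1(A^*,G)$) shows $\overline{A}/H$ does not split. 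As $H$ was arbitrary, $\overline{A}$ is $1$-non-split.

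The main obstacle, and the step deserving the most care, is the bookkeeping that identifies the cocycle of the quotient $\overline{A}/\ker(\alpha)$ with $f_d$ for the \emph{correct} nonzero derivation $d$ — in particular verifying that the correspondence $\alpha\leftrightarrow d$ is a well-defined bijection between $M'^*\setminus\{0\}$ and $\Der(\KK)\setminus\{0\}$, so that no nonzero hyperplane is inadvertently matched to the zero derivation. This rests on the fact, recalled before \eqref{formal sum1} and justified by Proposition~\ref{proposition B2}, that each $d_\omega(g^{-1})$ is the null matrix for all but finitely many $\omega$, so the formal sum in \eqref{ovA} is finite and \eqref{formal sum1} is legitimate; once the finiteness and the linearity of $\alpha\mapsto f_d$ are in hand, both claims follow from Claim~\ref{claim} and Theorem~\ref{thm link} with no further computation.
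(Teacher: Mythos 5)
Your proposal is correct and follows essentially the same route as the paper: both directions rest on the computation $\lambda\bigl(\sum_\omega \Tr(g\,d_\omega(g^{-1})a)\,d_\omega\bigr)=\Tr(g\,d(g^{-1})a)=\phi_d(a,g)$ identifying the quotient of $\overline{A}$ over $\ker(\lambda)$ with the extension $V_d$ attached to the derivation $d$ corresponding to $\lambda$, combined with Theorem~\ref{thm link} and the isomorphism $\Der(\KK)\cong H^1(A^*,G)$ (with Claim~\ref{claim} ensuring $f_d$ is not a coboundary for $d\neq 0$). No substantive difference from the paper's argument.
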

\begin{proof}
Let $V$ be a non-split $1$-extension of $A$. Then, in view of Theorem \ref{thm link} and the isomorphism $\Der(\KK)\cong H^1(A^*,G)$, we have $V \cong V_d$ for a non-null derivation $d\in \Der(\KK)$, where $V_d =\KK\times A$ with $G$ acting on it as in \eqref{1-ext-1} with $\phi = \phi_d$ as in~\eqref{phi der}. If $\lambda$ is the linear functional corresponding to $d$ then
\begin{equation}\label{formal sum2}
\phi_d(a,g) ~ = ~ \Tr(g\cdot d(g^{-1})\cdot a) ~ = ~ \sum_{\omega\in\Omega}\mathrm{Tr}(g\cdot d_\omega(g^{-1})\cdot a)\cdot \lambda(d_\omega)
\end{equation}
for every $g\in G$ and $a\in A$. This makes it clear that $V$ is isomorphic to the quotient of $\overline{A}$ over the kernel of $\lambda$, which is a hyperplane of $M'$. Thus we have proved that $\overline{A}$ is $1$-universal.

Conversely, let $\lambda$ be a non-zero linear functional of $M'$ and let $H$ be its kernel. Then $\overline{A}/H$ is described by the mapping $\phi_d(\blank,\blank)$ defined by \eqref{formal sum2}, which is the mapping associated with the derivation $d$ corresponding to $\lambda$. Clearly $d\neq 0$, since $\lambda \neq 0$. Therefore, by Theorem \ref{thm link}, the extension $V_d$ defined by $d$ is non-split. So, $\overline{A}$ is also $1$-non-split.
\end{proof}

\begin{lemma}\label{barA2}
The extension $\overline{A}$ admits $\neps$-points very well-stabilized in $G$.
\end{lemma}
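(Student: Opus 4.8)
The plan is to mimic the structure of the proof of Proposition~\ref{lemma C} as closely as possible, since $\overline{A}$ is constructed precisely so that its cocycle $\phi_d$ agrees with \eqref{phi der}. The key point is that the relevant computation in Proposition~\ref{lemma C}, namely that $\phi_d(e_{n+1,1}, g) = \Tr(g\cdot d(g^{-1})\cdot e_{n+1,1}) = 0$ for $g$ in the stabilizer $G_{A(e_{n+1,1})}$, is purely a statement about traces of matrix products and does not depend on which single derivation $d$ we use. In $\overline{A}$ the kernel-component of the action is the \emph{formal sum} $\sum_{\omega\in\Omega}\Tr(g\cdot d_\omega(g^{-1})\cdot a)\,d_\omega$, so I would verify the vanishing coordinate-wise: for each $\omega\in\Omega$ the scalar $\Tr(g\cdot d_\omega(g^{-1})\cdot e_{n+1,1})$ vanishes for every $g\in G_{A(e_{n+1,1})}$, by exactly the same block-triangular argument used in Proposition~\ref{lemma C} (with $d$ replaced by $d_\omega$). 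Summing over $\omega$, the whole $M'$-component of the action of such $g$ on $(0,e_{n+1,1})$ is zero, which shows $G_{\overline{A}(e_{n+1,1})} = G_{A(e_{n+1,1})}$, i.e. $\overline{A}(e_{n+1,1})$ is well stabilized.

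First I would fix, as in Proposition~\ref{lemma C}, the vector $a = e_{n+1,1}$, which represents the $\neps$-point $\neps(p_0,H_0)$ with $p_0 = \langle e_1\rangle$ and $H_0 : x_{n+1}=0$, and recall that $G_{A(e_{n+1,1})}$ consists of the matrices $(g_{i,j})\in G$ with $g_{1,j}=0$ for $j>1$ and $g_{i,n+1}=0$ for $i\leq n$. The structural fact I would reuse is that for such $g$ one has $g\cdot d_\omega(g^{-1})\cdot e_{n+1,1} = t_\omega e_{n+1,1}$ for some scalar $t_\omega\in\KK$, whence its trace is $0$ because $e_{n+1,1}$ is off-diagonal. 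This is where the block-triangular shape of $g$ enters: the entry pattern that stabilizes $\langle e_{n+1,1}\rangle$ forces the product to again be a scalar multiple of $e_{n+1,1}$, and the trace of any matrix supported on the $(n+1,1)$-entry vanishes. By Proposition~B2 only finitely many $d_\omega(g^{-1})$ are nonzero, so the sum is finite and the argument is rigorous.

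Next I would treat the $\neps$-lines through $\overline{A}(e_{n+1,1})$ exactly as in Proposition~\ref{lemma C}: take $\ell_{\overline{A}}$ and $\ell'_{\overline{A}}$ spanned by the pairs $\{(0,e_{n+1,1}),(0,e_{n,1})\}$ and $\{(0,e_{n+1,1}),(0,e_{n+1,2})\}$ respectively, which project onto $\neps$-lines $\ell_A$, $\ell'_A$ lying in the two different families of $\Gamma$. The same coordinate-wise trace computation shows each of these lines is well stabilized; then, setting $G_0 := G_{A(e_{n+1,1})}$ and $L_{\overline{A}} := \{\ell_{\overline{A}}\cdot g,\ \ell'_{\overline{A}}\cdot g\}_{g\in G_0}$, the projection maps $L_{\overline{A}}$ bijectively onto the full set $L_A$ of lines of $\neps(\Gamma)$ through $A(e_{n+1,1})$, using semi-flag-transitivity of $G$ and that $G_0$ permutes each family transitively. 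This exhibits $\overline{A}(e_{n+1,1})$ as very well stabilized in the sense of Definition~\ref{very well}.

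I do not expect a serious obstacle here; the proof is essentially a transcription of Proposition~\ref{lemma C} with the single derivation $d$ replaced by the family $\{d_\omega\}_{\omega\in\Omega}$ and the conclusion drawn coordinate-by-coordinate in $M' = \Der_\Omega(\KK)$. The one point requiring a little care is the bookkeeping of the formal sum: one must confirm that vanishing of each coordinate $\Tr(g\cdot d_\omega(g^{-1})\cdot e_{n+1,1})$ genuinely yields vanishing of the $M'$-component, which is immediate because $\{d_\omega\}_{\omega\in\Omega}$ is a \emph{linearly independent} family spanning $M'$, so a formal sum with all coefficients zero is the zero vector. Hence the whole statement follows, and Lemma~\ref{barA2} is proved.
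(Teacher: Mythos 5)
Your proof is correct and follows essentially the same route as the paper, which likewise reduces Lemma~\ref{barA2} to the trace computation of Proposition~\ref{lemma C} applied to each $d_\omega$ in place of the single derivation $d$, concluding coordinate-wise in $M'$. Your write-up merely spells out more explicitly the finiteness of the formal sum and the treatment of the two well-stabilized lines, both of which the paper leaves implicit.
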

\begin{proof}
As in the proof of Proposition \ref{lemma C}, the point represented by $(0, e_{n+1,1})$ is very well-stabilized. This can be proved in just the same way as in the proof of Proposition \ref{lemma C}. For instance, in that proof we have shown that, for every derivation $d$, we have $\mathrm{Tr}(gd(g^{-1})e_{n+1,1}) = 0$ for every $g$ stabilizing $\langle e_{n+1,1}\rangle$. In particular, this holds for $d = d_\omega$ for every $\omega\in\Omega$, which is what we need to claim that this point is well-stabilized.
\end{proof}

\begin{corollary}\label{barA3}
The extension $\overline{A}$ hosts a $G$-cover of $\neps$.
\end{corollary}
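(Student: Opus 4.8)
The plan is to read this off from Theorem~\ref{theo 1.5}, applied with $V' = \overline{A}$, $V = A$, $\eps = \neps$ and kernel $W = M'$. That theorem asserts the equivalence of three conditions: its first condition is precisely the assertion we want (namely that $\overline{A}$ hosts a $G$-cover of $\neps$, with the natural projection $\pi\colon\overline{A}\rightarrow A$ serving as the morphism onto $\neps$), while its third condition is that the extension $\overline{A}$ be $1$-non-split. So once the standing hypotheses of the theorem have been checked, the corollary will follow simply by deducing condition~1 from condition~3.

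First I would assemble the hypotheses that Theorem~\ref{theo 1.5} inherits from Lemma~\ref{lemma 1.5}. The group $G = \mathrm{SL}(n+1,\KK)$ acts semi-flag-transitively on $\Gamma = A_{n,\{1,n\}}(\KK)$; this was recorded in the Introduction and recalled at the start of the present section. The requirement that at least one very well stabilized $\neps$-point exist in $\PG(\overline{A})$ is exactly the content of Lemma~\ref{barA2} (the point represented by $(0, e_{n+1,1})$). In addition, Theorem~\ref{theo 1.5} demands that the extension be central: this is immediate from the defining action \eqref{ovA}, since $G$ fixes the $M'$-component, and was noted just before Lemma~\ref{barA1}.

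Next I would supply the input for condition~3. By Lemma~\ref{barA1} the extension $\overline{A}$ is $1$-non-split; that lemma in fact furnishes both $1$-universality and $1$-non-splitness, but only the latter is needed here. Hence condition~3 of Theorem~\ref{theo 1.5} holds, and the equivalence forces condition~1: the $G$-module $\overline{A}$ hosts a $G$-cover of $\neps$, with $\pi$ as the morphism onto $\neps$. This is the assertion of the corollary.

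There is no genuine obstacle in this argument: the corollary is a bookkeeping assembly of facts already established, and the only point requiring care is to match the roles of $(V', V, W, \eps)$ with $(\overline{A}, A, M', \neps)$ and to confirm that every hypothesis of Theorem~\ref{theo 1.5}—semi-flag-transitivity of $G$, the existence of a very well stabilized $\neps$-point, centrality of the extension, and $1$-non-splitness—has indeed been verified above.
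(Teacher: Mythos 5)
Your proposal is correct and follows exactly the paper's own route: the paper likewise deduces the corollary from Theorem~\ref{theo 1.5} by combining Lemma~\ref{barA1} (for $1$-non-splitness) and Lemma~\ref{barA2} (for the very well stabilized $\neps$-point), with centrality and semi-flag-transitivity already in place. Your write-up simply makes explicit the hypothesis-checking that the paper leaves implicit.
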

\begin{proof}
In view of Lemmas \ref{barA1} and \ref{barA2}, the conclusion follows from Theorem \ref{theo 1.5}.
\end{proof}

Lemma \ref{barA1}, Corollary \ref{barA3} and Corollary \ref{1-universal ter} now yield the desired conclusion:

\begin{proposition}
 $\overline{A} = \widehat{A}$.
\end{proposition}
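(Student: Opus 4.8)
The plan is to identify $\overline{A}$ with $\widehat{A}$ by verifying that $\overline{A}$ meets all the hypotheses of Corollary \ref{1-universal ter}, which was tailored precisely to force uniqueness of a $1$-universal, $1$-non-split central $G$-cover. First I would confirm that the ambient hypotheses on $G$ and $A$ hold in our situation: the group $G = \mathrm{SL}(n+1,\KK)$ is perfect (for $n \geq 1$, and in particular for $n \geq 2$), and the adjoint module $A$ is rigid, as recorded at the start of Subsection \ref{M*sec}. Moreover $G$ acts transitively on the points of $\Gamma = A_{n,\{1,n\}}(\KK)$, as noted in the Introduction. Finally, by Proposition \ref{lemma A} the kernel $M$ of $\tneps$ is acted on trivially by $G$, so $G$ acts trivially on $W = M$ as required by the hypotheses of Corollary \ref{1-universal ter}.

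Next I would supply the two remaining inputs Corollary \ref{1-universal ter} demands of $\overline{A}$, both of which are already in hand. By Lemma \ref{barA1}, the central extension $\overline{A} = M' \cdot A$ is simultaneously $1$-universal and $1$-non-split, so $\overline{A}$ plays exactly the role of the module ``$\overline{V}$'' in the statement of Corollary \ref{1-universal ter}. By Corollary \ref{barA3}, this extension hosts a $G$-cover $\bar{\eps}$ of $\neps$, which is the last hypothesis of the corollary. With $\eps = \neps$, $\widehat{V} = \widehat{A}$, $W = M$, and $\overline{V} = \overline{A}$, every hypothesis of Corollary \ref{1-universal ter} is now verified.

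Applying Corollary \ref{1-universal ter} then yields $\overline{A} = \widehat{A}$ directly, completing the proof. The conceptual heart of the argument has already been absorbed into Corollary \ref{1-universal ter} and Theorem \ref{1-universal bis}: the nontrivial point is that a $1$-universal, $1$-non-split central extension of a rigid module over a perfect group is $1$-complete, so $\widehat{A}$ cannot strictly dominate $\overline{A}$ through a nonzero kernel without producing a non-split $1$-extension of $\overline{A}$, contradicting $1$-completeness. Thus the main obstacle is not in this final proposition but in having correctly established that $\overline{A}$ is both $1$-universal and $1$-non-split (Lemma \ref{barA1}); once those facts are available, the identification $\overline{A} = \widehat{A}$ is an immediate application of the uniqueness packaged in Corollary \ref{1-universal ter}.
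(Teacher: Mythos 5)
Your proof is correct and follows exactly the paper's own argument: the paper derives the proposition from Lemma \ref{barA1}, Corollary \ref{barA3} and Corollary \ref{1-universal ter}, just as you do. Your explicit verification of the ambient hypotheses (perfectness of $G$, rigidity of $A$, triviality of the action on $M$ via Proposition \ref{lemma A}) is a welcome elaboration of what the paper leaves implicit.
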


Formula \eqref{hatA} remains to be proved. Let now $V = V(n+1,\KK)$ and $V^*$ its dual, with the vectors of $V$ regarded as rows and those of $V^*$ as columns. So, for $\lambda\in V^*$ and $v\in V$ the tensor product $\lambda\otimes v$ is just the row-by-column product $\lambda\cdot v$ while $\lambda(v) = v\cdot \lambda = \mathrm{Tr}(\lambda\cdot v)$. The group $G$, regarded as a group of matrices, acts as follows on $V$ and $V^*$: every $g\in G$ maps $v\in V$ onto $v\cdot g$ and $\lambda\in V^*$ onto $g^{-1}\cdot\lambda$. Accordingly, $g$ maps $\lambda\cdot v$ onto $(g^{-1}\lambda)\cdot(vg) = g^{-1}(\lambda\cdot v)g$, as we know.

Let $e_1 = (1,0,\dots,0)$ and $\eta_{n+1} = (0,\dots, 0, 1)^t$. So, $\eta_{n+1}\cdot e_1 = e_{n+1,1}$ is the matrix we have considered in the proof of Proposition \ref{lemma C}. The points of $\neps(\Gamma)$ are represented by the matrices $g^{-1}e_{n+1,1}g = (g^{-1}\eta_ {n+1})\cdot(e_1g) = \lambda\cdot v$, where $\lambda = g^{-1}\eta_{n+1}$ and $v = e_1g$. The vector of $\overline{A}$ which corresponds to $\lambda\cdot v$ is
\[(\sum_{\omega\in\Omega}\mathrm{Tr}(g\cdot d_\omega(g^{-1})\cdot e_{n+1,1})d_\omega, \lambda\cdot v).\]
However
\begin{multline*} \mathrm{Tr}(g\cdot d_\omega(g^{-1})\cdot e_{n+1,1}) = \\ \mathrm{Tr}(g\cdot d_\omega(g^{-1})\cdot \eta_{n+1}e_1) = \mathrm{Tr}(d_\omega(g^{-1})\cdot\eta_{n+1}e_1g) = \mathrm{Tr}(d_\omega(g^{-1})\cdot \eta_{n+1}v)
\end{multline*}
and $d_\omega(g^{-1})\cdot\eta_{n+1} = d_\omega(g^{-1}\eta_{n+1}) = d_\omega(\lambda)$, because $g^{-1}d_\omega(\eta_{n+1}) = 0$ (since $d_\omega(\eta_{n+1})  = 0$). So,
\[\mathrm{Tr}(g\cdot d_\omega(g^{-1})\cdot e_{n+1,1}) ~ = ~ \mathrm{Tr}(d_\omega(\lambda)\cdot v) ~ = ~ v\cdot d_\omega(\lambda) ~~(= -d_\omega(v)\cdot \lambda).\]
Formula~\eqref{hatA} is proven. The proof of Theorem \ref{main theorem} is complete.
\hfill\qed

\appendix

\section{The relatively universal cover of a given embedding}\label{appendix A}

\subsection{Basics on covers of embeddings}\label{Appendix A.1}

Given a (connected) point-line geometry $\Gamma$, let $\eps:\Gamma\to\PG(V)$ and $\eps':\Gamma\to\PG(V')$ be two of its projective
embeddings. We say that $\eps'$ \emph{covers} $\eps$ (and we write $\eps' \geq \eps$) if there exists a semilinear mapping $\phi:V'\to V$ such that $\eps=\phi\circ\eps'$. The mapping $\phi$ is surjective (since $\eps'(\Gamma)$ and $\eps(\Gamma)$ span $\PG(V')$ and respectively $\PG(V)$). Moreover it is unique up to rescaling (this follows from the connectedness of $\Gamma$). We call it the  \emph{projection} of $\eps'$ onto $\eps$. Put $W=\ker{\phi}$. We say that $W$ is the {\it kernel of the projection} of $\eps'$ onto $\eps$. So $V\cong V'/W$ and we say that $\eps$ is the \emph{quotient} of $\eps'$ over $W$, also writing $\eps=\eps'/W$.

The embeddings $\eps'$ and $\eps$ are said to be \emph{isomorphic} (and we write $\eps\cong\eps'$) if $\psi$ is bijective.  If $\psi$ is not injective we say that $\eps'$ is a  \emph{proper cover} of $\eps$ and $\eps$ is a  \emph{proper quotient} of $\eps'$.

An embedding $\widehat{\eps}$ is \emph{absolutely universal} if it covers all embeddings of $\Gamma$. Clearly, the absolutely universal embedding is unique up to isomorphism. An embedding is \emph{relatively universal} if it admits no proper cover.

In general a geometry might or might not afford the absolutely universal embedding. However, Ronan in~\cite{Ronan} shows that for every
projective embedding of a point-line geometry $\varepsilon:\Gamma\to\PG(V)$ there exists an embedding
$\widehat{\varepsilon}:\Gamma\to\PG(\widehat{V})$ covering all embeddings which cover $\varepsilon$. Clearly, the embedding $\teps$ is unique up to isomorphisms and relatively universal. We call it the \emph{(relatively) universal cover} of~$\varepsilon$.

\subsection{Ronan's construction of the relatively universal cover}

Given a projective embedding $\varepsilon:\Gamma\rightarrow\PG(V)$, for every point $p$ (line $\ell$) of $\Gamma$ let $V_p$ (respectively $V_\ell$) be the $1$-dimensional ($2$-dimensional) subspace of $V$ corresponding to $\eps(p)$ (respectively $\eps(\ell)$). Denoted by $\mathcal P$ and $\mathcal L$ the set of points and the set of lines of $\Gamma$, take a copy ${\bf  V}_p$ of $V_p$ and a copy ${\bf V}_\ell$ of $V_\ell$ for every $p\in {\mathcal P}$ and every $\ell \in{\mathcal L}$. All these copies are assumed to be pairwise disjoint but for sharing the same null vector. Of course, for every $p\in {\mathcal P}$ (every $\ell \in{\mathcal L}$) an isomorphism $\iota_p:{\bf V}_p\rightarrow V_p$ (respectively $\iota_\ell:{\bf V}_\ell\rightarrow V_\ell$) is given. For every $p\in {\mathcal P}$ and every $\ell\in{\mathcal L}$ containing $p$, we put $\iota_{p,\ell} = \iota_\ell^{-1}\circ \iota_p$. Form the direct sum
\[{\bf V} ~ := ~ (\bigoplus_{p\in{\mathcal P}}{\bf V}_p)\oplus(\bigoplus_{\ell \in {\mathcal L}}{\bf V}_\ell).\]
 Let $W$ be the subspace of $\bf V$ spanned by the set
\[J ~ = ~ \{{\bf v} - \iota_{p,\ell}({\bf v}) ~:~ {\bf v}\in {\bf V}_p, ~ p\in {\mathcal P}, ~ \ell\in{\mathcal L}, ~ p\in \ell\}.\]
Put $\widehat{V} := {\bf V}/W$. Then $\widehat{V}$ hosts a projective embedding $\teps$ of $\Gamma$ and $\teps$ covers $\eps$.  Explicitly, $\teps(p) = \widehat{V}_p := ({\bf V}_p + W)/W$ and $\teps(\ell) = \widehat{V}_\ell := ({\bf V}_\ell + W)/W$ for every $p\in {\mathcal P}$ and every $\ell\in{\mathcal L}$. The natural projection $\pi_V$ of $\widehat{V}$ onto $V$, which maps ${\bf V}_p$ onto $V_p$ according to $\iota_p$ and ${\bf V}_\ell$ onto $V_\ell$ according to $\iota_\ell$, induces a projection $\widehat{\pi}:\widehat{V}\rightarrow V$ of $\teps$ onto $\eps$. Obviously, $\ker(\widehat{\pi}) = \ker(\pi_V)/W$.

As proved by Ronan \cite{Ronan}, the embedding $\teps$ constructed in this way is indeed the universal cover of~$\eps$.

\begin{proposition}\label{propA}
With $\widehat{V}$ and $\widehat{\pi}$ as above, every semilinear mapping $g$ of $V$ stabilizing $\eps(\Gamma)$ lifts to a unique semilinear mapping $\widehat{g}$ of $\widehat{V}$ such that $\widehat{\pi}(\hat{v}\cdot\widehat{g}) = \widehat{\pi}(\hat{v})\cdot g$ for every $\hat{v}\in\widehat{V}$.
\end{proposition}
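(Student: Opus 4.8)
The plan is to lift $g$ in two stages: first to a semilinear self-map $\mathbf{g}$ of the big direct sum $\mathbf{V}$ used in Ronan's construction, and then to check that $\mathbf{g}$ preserves the subspace $W$, so that it descends to a semilinear map $\widehat{g}$ on $\widehat{V}=\mathbf{V}/W$. I would begin by recording what the hypothesis gives: since $g$ is a semilinear bijection of $V$ carrying $\eps(\Gamma)$ onto itself, it permutes the point-subspaces $V_p$ among themselves and the line-subspaces $V_\ell$ among themselves, preserving incidence. Write $p\mapsto p^g$ and $\ell\mapsto\ell^g$ for the induced permutations of $\cP$ and $\cL$, so that $g$ restricts to semilinear isomorphisms $V_p\to V_{p^g}$ and $V_\ell\to V_{\ell^g}$, with $p\in\ell$ if and only if $p^g\in\ell^g$. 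I then define $\mathbf{g}\colon\mathbf{V}\to\mathbf{V}$ componentwise on the summands by
\[ \mathbf{g}|_{\mathbf{V}_p} := \iota_{p^g}^{-1}\circ g\circ\iota_p\colon \mathbf{V}_p\to\mathbf{V}_{p^g}, \qquad \mathbf{g}|_{\mathbf{V}_\ell} := \iota_{\ell^g}^{-1}\circ g\circ\iota_\ell\colon \mathbf{V}_\ell\to\mathbf{V}_{\ell^g}. \]
Because $\mathbf{V}$ is the direct sum of the $\mathbf{V}_p$ and the $\mathbf{V}_\ell$, these partial maps assemble into a single semilinear map $\mathbf{g}$ with the same companion field automorphism as $g$; it is bijective, its inverse being assembled in the same way from $g^{-1}$. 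By construction $\pi_V\circ\mathbf{g}=g\circ\pi_V$, where $\pi_V$ is the projection of $\mathbf{V}$ onto $V$ (indeed on $\mathbf{V}_p$ one has $\pi_V=\iota_p$, and $\iota_{p^g}\circ\mathbf{g}|_{\mathbf{V}_p}=g\circ\iota_p$).

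The key step -- and the one I expect to be the only real point to verify -- is that $\mathbf{g}(W)\subseteq W$. Since $W=\langle J\rangle$, it suffices to send each generator $\mathbf{v}-\iota_{p,\ell}(\mathbf{v})$, with $\mathbf{v}\in\mathbf{V}_p$ and $p\in\ell$, into $W$. Using $\iota_{p,\ell}=\iota_\ell^{-1}\circ\iota_p$ together with the definitions above, a short computation gives $\mathbf{g}(\iota_{p,\ell}(\mathbf{v}))=\iota_{\ell^g}^{-1}(g(\iota_p(\mathbf{v})))=\iota_{p^g,\ell^g}(\mathbf{g}(\mathbf{v}))$. Hence $\mathbf{g}(\mathbf{v}-\iota_{p,\ell}(\mathbf{v}))=\mathbf{g}(\mathbf{v})-\iota_{p^g,\ell^g}(\mathbf{g}(\mathbf{v}))$, and since $p\in\ell$ forces $p^g\in\ell^g$, this is again a generator of $J$. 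Thus $\mathbf{g}$ stabilizes $W$ and induces a semilinear map $\widehat{g}$ on $\widehat{V}$; moreover $\widehat{g}$ maps $\widehat{V}_p$ onto $\widehat{V}_{p^g}$ and $\widehat{V}_\ell$ onto $\widehat{V}_{\ell^g}$, so it stabilizes $\teps(\Gamma)$.

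The compatibility $\widehat{\pi}(\hat{v}\cdot\widehat{g})=\widehat{\pi}(\hat{v})\cdot g$ then follows immediately, since $\pi_V\circ\mathbf{g}=g\circ\pi_V$ passes to the quotient $\widehat{V}=\mathbf{V}/W$ (recall $\ker\widehat{\pi}=\ker(\pi_V)/W$). This establishes existence. For uniqueness, I would use that any lift of $g$, being a semilinear map that respects the embedded geometry, maps each point-space $\widehat{V}_p$ into $\widehat{V}_{p^g}$; combined with the fact that $\widehat{\pi}$ restricts to an isomorphism $\widehat{V}_p\to V_p$ (points of $\teps(\Gamma)$ are $1$-spaces and $\widehat{\pi}$ is a projection of embeddings), the relation $\widehat{\pi}(\hat{v}\cdot\widehat{g})=\widehat{\pi}(\hat{v})\cdot g$ forces $\hat{v}\cdot\widehat{g}=(\widehat{\pi}|_{\widehat{V}_{p^g}})^{-1}(\widehat{\pi}(\hat{v})\cdot g)$ on each $\widehat{V}_p$. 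Since the subspaces $\widehat{V}_p$ span $\widehat{V}$ and a semilinear map with prescribed companion automorphism is determined by its values on a spanning set, the lift is unique. The only delicate point here is the verification $\mathbf{g}(W)\subseteq W$, i.e. the compatibility of $\mathbf{g}$ with the gluing maps $\iota_{p,\ell}$; everything else is routine bookkeeping.
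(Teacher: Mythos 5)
Your proof is correct and follows essentially the same route as the paper: the paper's own proof is a one-line remark that $g$ lifts to a semilinear map of $\mathbf{V}$ which stabilizes the generating set $J$ of $W$ (hence $W$ itself) and the kernel $\ker(\pi_V)$, which is precisely the componentwise lift $\mathbf{g}$ and the computation $\mathbf{g}(\mathbf{v}-\iota_{p,\ell}(\mathbf{v}))=\mathbf{g}(\mathbf{v})-\iota_{p^g,\ell^g}(\mathbf{g}(\mathbf{v}))$ that you spell out. Your uniqueness argument, which the paper leaves implicit under ``Immediate'', is also sound provided one adopts (as you do) the intended reading that a lift of $g$ must stabilize $\teps(\Gamma)$; without that requirement one could perturb $\widehat{g}$ by a semilinear map into $\ker(\widehat{\pi})$.
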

\begin{proof}
Immediate, since $g$ lifts to a semilinear mapping of $\bf V$ which stabilizes both the generating set $J$ of $W$, hence $W$ itself, and the kernel $\ker(\pi_V)$ of $\pi_V$.
\end{proof}

\section{The vector space of the derivations of a field}\label{appendix B}

In this appendix we recall some information on the derivations of a field. The proofs of the results we are going to state can be found in \cite[chp. II, \S\S 12 and 17]{ZS} (also \cite[chp. VIII]{L}); what of the following is not explicitly included in \cite{ZS} or \cite{L} is anyway not too difficult to prove.

A  \emph{derivation} of a field $\KK$ is a mapping $d:\KK\rightarrow \KK$ such that
\begin{equation}\label{der-def}
d(x+y) = d(x)+d(y) ~\mbox{ and } ~ d(xy) = d(x)y+xd(y), ~~ \forall x, y \in \KK.
\end{equation}
The  \emph{null} derivation is the derivation which maps every $x\in\KK$ onto $0$. Clearly, the sum of two derivations is a derivation. Moreover, for every $k\in \KK$ and every derivation $d$ of $\KK$, the mapping $kd:x\rightarrow kd(x)$ is still a derivation. So, the derivations of $\KK$ form a $\KK$-vector space, usually denoted by $\Der(\KK)$. (The space $\Der(\KK)$ also admits a Lie algebra structure, with $[d, d'] = d\circ d'- d'\circ d$ as the bracket product, but this is not relevant for us here).

Recall that the algebraic closure in $\KK$ of a subfield $\KK'$ of $\KK$ is the subfield $\overline{\KK}'$ of $\KK$ formed by all elements of $\KK$ that are algebraic over $\KK'$. The set $\Kder(\KK)$ of the elements $x\in\KK$ such that $d(x) = 0$ for every $d\in\Der(\KK)$ contains the algebraic closure $\overline{\KK}_0$ in $\KK$ of the minimal subfield $\KK_0$ of $\KK$. When $\ch(\KK) = p > 0$ then $\Kder(\KK)$ also contains the image $\KK^p = \{x^p\}_{x\in \KK}$ of the Frobenius endomorphism of $\KK$. Note that $\overline{\KK}_0 \subseteq \KK^p$. Indeed when $\ch(\KK) = p > 0$ and $\KK$ contains
the algebraic closure of $\KK_0$, then
the subfield $\overline{\KK}_0$ is the union of all finite subfields of $\KK$ and all of them are contained in $\KK^p$.

The above already entails a complete description of $\Kder(\KK)$. Indeed, as we shall state in a few lines (Theorem \ref{der-main}), when $\ch(\KK) = 0$ then $\Kder(\KK) = \overline{\KK}_0$ while $\Kder(\KK) = \KK^p$ when $\ch(\KK) = p > 0$.

As  will be clear from the sequel, if $\dim(\Der(\KK))$ is infinite then it is uncountable. When this is the case, we can hardly exhibit even one single example of a vector-basis of $\Der(\KK)$. Neglecting the vector-bases of $\Der(\KK)$, we introduce the following notion.

A  \emph{derivation basis} of $\KK$ is a subset $\Omega$ of $\KK\setminus\Kder(\KK)$ such that for every mapping $\nu:\Omega\rightarrow\KK$ there exists a unique derivation $d_\nu\in \Der(\KK)$ such that $\nu$ is the restriction of $d_\nu$ to $\Omega$. Note that $\Omega = \emptyset$ is allowed. Of course, $\Omega = \emptyset$ when $\KK = \Kder(\KK)$, namely $\KK$ admits only the null derivation.

Recall that, given a subfield $\KK'$ of $\KK$, a subset $X$ of $\KK$ is said to  \emph{generate} $\KK$  \emph{over} $\KK'$ (to $\KK'$- \emph{generate} $\KK$, for short) if $\KK'\cup X$ generates $\KK$ as a field. Clearly, a $\KK'$-generating set $X$ of $\KK$ is minimal only if $X\cap \KK' = \emptyset$.

\begin{theorem}\label{der-main}
Every field admits derivation bases and all derivation bases of a given field have the same cardinality. Moreover,
\begin{itemize}
\item[(1)] a field $\KK$ admits the empty set as its (unique) derivation basis if and only if $\KK = \Kder(\KK)$;
\item[(2)] when $\ch(\KK) = 0$ then $\Kder(\KK)$ is the algebraic closure $\overline{\KK}_0$ in $\KK$ of the minimal subfield $\KK_0$ of $\KK$ and the derivation bases of $\KK$ are the transcendence bases of $\KK$ over $\KK_0$;
\item[(3)] when $\ch(\KK) = p > 0$ then $\Kder(\KK) = \KK^p$ and the derivation bases of $\KK$ are the minimal $\KK^p$-generating sets of $\KK$.
\end{itemize}
\end{theorem}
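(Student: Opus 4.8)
The plan is to treat the two characteristics separately, organizing everything around the $\KK$-linear restriction map $\rho_\Omega : \Der(\KK)\to\KK^\Omega$, $d\mapsto (d(\omega))_{\omega\in\Omega}$: by the very definition of a derivation basis, a subset $\Omega\subseteq\KK\setminus\Kder(\KK)$ is a derivation basis exactly when $\rho_\Omega$ is a bijection, surjectivity being the existence of $d_\nu$ for every $\nu$ and injectivity its uniqueness. Since $d(1)=0$ forces every derivation to vanish on the minimal subfield $\KK_0$, all derivations are $\KK_0$-linear, so a derivation is determined by its effect on any set that generates $\KK$ as a field, subject to compatibility with the defining relations.

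First I would pin down $\Kder(\KK)$. In characteristic $0$, differentiating the minimal polynomial of an element $x$ algebraic over $\KK_0$ and using that this polynomial is separable shows $d(x)=0$ for every $d$, giving $\overline{\KK}_0\subseteq\Kder(\KK)$; conversely, for $x$ transcendental over $\KK_0$ I would extend $\{x\}$ to a transcendence basis $B$ of $\KK$ over $\KK_0$, take the formal partial derivative $\partial/\partial x$ on the rational function field $\KK_0(B)$, and extend it over the separable algebraic extension $\KK/\KK_0(B)$ to a derivation with $d(x)=1\neq 0$, so $\Kder(\KK)=\overline{\KK}_0$. In characteristic $p>0$, the identity $d(x^p)=p\,x^{p-1}d(x)=0$ from \eqref{der-def} gives $\KK^p\subseteq\Kder(\KK)$, and the analogous construction on a $p$-basis (below) produces derivations nonzero off $\KK^p$, so $\Kder(\KK)=\KK^p$.

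The core of the argument is the identification of derivation bases. In characteristic $0$ I would show that they are precisely the transcendence bases of $\KK$ over $\KK_0$: any such basis $B$ lies in $\KK\setminus\overline{\KK}_0=\KK\setminus\Kder(\KK)$, prescribing arbitrary values on $B$ determines a unique derivation of the purely transcendental subfield $\KK_0(B)$ (formal partials), and because $\KK/\KK_0(B)$ is algebraic and separable this derivation extends uniquely to $\KK$ with the prescribed values unconstrained; hence $\rho_B$ is a bijection. Conversely, if $\Omega$ is algebraically dependent over $\KK_0$ a nontrivial relation forces a linear dependence among the $d(\omega)$, destroying surjectivity of $\rho_\Omega$, while if $\Omega$ fails to be a transcendence basis then some transcendental element escapes its differential span and injectivity fails; so a derivation basis is exactly a transcendence basis. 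In characteristic $p$ I would run the parallel argument with minimal $\KK^p$-generating sets: since $\KK/\KK^p$ is purely inseparable of exponent one, every element of $\KK$ is a $\KK^p$-polynomial in a $p$-basis $B$ with all exponents $<p$, the defining relations are $b^p\in\KK^p$ whose formal derivative vanishes identically, and this is exactly what makes $\rho_B$ a bijection: that $B$ generates $\KK$ over $\KK^p$ yields injectivity, while its minimality (equivalently, $p$-independence) yields surjectivity.

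Finally I would settle equicardinality and item (1). Item (1) is immediate: $\Omega=\emptyset$ is a derivation basis iff $\rho_\emptyset$, whose target is the zero space, is a bijection, iff $\Der(\KK)=\{0\}$, iff $\KK=\Kder(\KK)$. For equicardinality, in characteristic $0$ I would invoke the invariance of transcendence degree, and in characteristic $p$ the exchange property of $p$-independence, which makes the $p$-bases the bases of a matroid and hence of common cardinality; note that the vector-space isomorphism $\Der(\KK)\cong\KK^\Omega$ alone does not deliver this for infinite $\Omega$, so these combinatorial invariants are genuinely needed. The step I expect to be the main obstacle is the positive-characteristic inseparable bookkeeping: one must verify carefully that prescribing values on an \emph{infinite} $p$-basis yields a consistent derivation on all of $\KK$ (consistency hinging on every relation among $p$-basis monomials being a $\KK^p$-relation, whose formal derivative vanishes), and that minimal $\KK^p$-generating sets genuinely satisfy the matroid exchange axiom; the characteristic-$0$ counterparts are classical transcendence theory, so it is the purely inseparable case where the real care is required.
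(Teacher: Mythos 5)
Your proposal is correct: it is the standard argument (separability and transcendence bases in characteristic $0$, $p$-bases and $p$-independence in characteristic $p>0$), and you rightly flag the two genuinely delicate points, namely the consistency of the Leibniz extension over an infinite $p$-basis and the exchange property needed for equicardinality, which the vector-space isomorphism $\Der(\KK)\cong\KK^\Omega$ alone cannot supply. The paper itself offers no proof of Theorem~\ref{der-main} --- it defers entirely to Zariski--Samuel \cite{ZS} and Lang \cite{L} --- and your sketch reconstructs exactly the treatment found in those references, so the two approaches coincide.
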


The common cardinality of the derivation bases of a field $\KK$ will be called the  \emph{derivation rank} of $\KK$ and denoted by $\drk(\KK)$. By claim (2) of Theorem \ref{der-main}, when $\ch(\KK) = 0$ then $\drk(\KK)$ is the transcendence degree of $\KK$ over $\KK_0$. When $\ch(\KK) = p > 0$ then $\drk(\KK)$ is the $\KK^p$-generating rank of $\KK$, namely the common size of the minimal $\KK^p$-generating sets. Given a derivation basis $\Omega$ of $\KK$, when $\ch(\KK) = 0$ every $k\in \KK$ belongs to the algebraic closure in $\KK$ of the subfield of $\KK$ generated by $\Kder(\KK)\cup\Omega_k$ for a finite (possibly empty) subset $\Omega_k$ of $\Omega$. When $\ch(\KK) = p >0$ every $k\in \KK$ belongs to the subfield of $\KK$ generated by $\Kder(\KK)\cup\Omega_k$ for a finite subset $\Omega_k$ of $\Omega$. In either case, with the derivation $d_\omega$ (for $\omega \in \Omega$) defined as in Section \ref{Intro2}, we have $d_\omega(k)\neq 0$ only if $\omega \in \Omega_k$. Consequently,

\begin{proposition}\label{proposition B2}
For every $k\in \KK$, we have $d_\omega(k) = 0$ for all but finitely many (possibly no) choices of $\omega\in\Omega$.
\end{proposition}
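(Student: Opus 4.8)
The plan is to deduce this directly from the structural facts about derivation bases recorded just above, namely Theorem~\ref{der-main} and the ensuing remarks, which for a fixed $k\in\KK$ single out a finite subset $\Omega_k\subseteq\Omega$. First I would fix $k\in\KK$ and invoke that discussion: when $\ch(\KK)=0$ the element $k$ lies in the algebraic closure (inside $\KK$) of the subfield $F_k$ generated by $\Kder(\KK)\cup\Omega_k$, while when $\ch(\KK)=p>0$ one has simply $k\in F_k$; in both cases $\Omega_k$ is finite. It then suffices to prove that $d_\omega(k)=0$ whenever $\omega\in\Omega\setminus\Omega_k$, for then $\{\omega\in\Omega: d_\omega(k)\neq 0\}\subseteq\Omega_k$ is finite, which is exactly the assertion.

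The core step is to show that each such $d_\omega$ kills the whole field $F_k$. Fix $\omega\in\Omega\setminus\Omega_k$. By the very definition of $\Kder(\KK)$ the derivation $d_\omega$ vanishes on $\Kder(\KK)$, and by the definition of $d_\omega$ (see Section~\ref{Intro2}) it vanishes on every element of $\Omega\setminus\{\omega\}$, in particular on $\Omega_k$ since $\Omega_k\subseteq\Omega\setminus\{\omega\}$. Thus $d_\omega$ vanishes on the generating set $\Kder(\KK)\cup\Omega_k$ of $F_k$, and then additivity together with the Leibniz rule (and the quotient rule for inverses) propagates this to all sums, products and quotients, so that $d_\omega$ is identically zero on $F_k$.

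It remains to pass from $F_k$ to $k$ itself. In positive characteristic this is immediate, since $k\in F_k$. In characteristic $0$ the element $k$ is algebraic, hence separable, over $F_k$; writing $f$ for its minimal polynomial over $F_k$ and applying $d_\omega$ to the relation $f(k)=0$ gives $f'(k)\,d_\omega(k)=0$, because $d_\omega$ annihilates the coefficients of $f$, and $f'(k)\neq 0$ by separability, whence $d_\omega(k)=0$. I expect the only delicate point to be this last characteristic-$0$ extension step, which rests on the uniqueness of the extension of a derivation along a separable algebraic extension; this is standard (see \cite{ZS}) and causes no real difficulty here. Combining the three steps yields $\{\omega\in\Omega: d_\omega(k)\neq 0\}\subseteq\Omega_k$, completing the proof.
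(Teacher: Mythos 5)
Your proposal is correct and follows essentially the same route as the paper: the paper derives Proposition~\ref{proposition B2} directly from the remarks preceding it (the finite set $\Omega_k$ such that $k$ lies in, or is algebraic over, the subfield generated by $\Kder(\KK)\cup\Omega_k$, together with the vanishing of $d_\omega$ on that generating set for $\omega\notin\Omega_k$). You merely spell out the routine verifications — propagation of vanishing through field operations and the separable minimal-polynomial argument in characteristic $0$ — which the paper leaves implicit.
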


Obviously, if $\Omega$ is a derivation basis of $\KK$ then $\Der(\KK)$ is isomorphic with the $\KK$-vector space $\KK^\Omega$ of all mappings from $\Omega$ to $\KK$. So, if $\drk(\KK)$ is finite then $\drk(\KK) = \dim(\Der(\KK))$, otherwise $\drk(\KK) < \dim(\Der(\KK))$.

\section{The isomorphism $H^1(\mathfrak{sl}(n+1,\KK)^*,\mathrm{SL}(n+1,\KK))\cong \Der(\KK)$}\label{appendix C}

Let $A$ be the adjoint module for the group $G = \mathrm{SL}(n+1,\KK)$ and let $A^*$ be its dual. Consider the following mapping from $\Der(\KK)$ to the space $C^1(A^*, G)$ of the $1$-cochains of $G$ over $A^*$:
\begin{equation}\label{eq C1}
\delta_{A^*} ~ \left\{\begin{array}{rcc}
 \Der(\KK) & \longrightarrow & C^1(A^*,G), \\
& & \\
d\in \Der(\KK) & \longrightarrow &
\delta_{A^*}(d) ~:~ \left(\begin{array}{rcl}
& g \in G & \\
& \downarrow &  \\
a\in A & \rightarrow & \Tr(g^{-1}d(g)a)
\end{array}\right).
\end{array}\right.
\end{equation}
As proved earlier in this paper (Claim \ref{claim}) for every $d\in \Der(\KK)$ the mapping $\delta_{A^*}(d):G\rightarrow A^*$ is indeed a $1$-cocycle of $G$ over $A^*$ and $\delta_{A^*}(d)$ is a coboundary if and only if $d = 0$. Clearly, the mapping $\delta_{A^*}:\Der(\KK)\rightarrow Z^1(A^*,G)$ is linear and injective. Therefore the mapping
\[\overline{\delta}_{A^*} ~ \left\{\begin{array}{ccc}
\Der(\KK) & \longrightarrow & H^1(A^*,G) \\
d\in \Der(\KK) & \rightarrow & \delta_{A^*}(d) + B^1(A^*,G)
\end{array}\right.\]
is an injective linear mapping from $\Der(\KK)$ to $H^1(A^*,G) = Z^1(A^*,G)/B^1(A^*,G)$. It is an isomorphism if and only if it is surjective. Equivalently, $\overline{\delta}_{A^*}$ is an isomorphism if and only if $\delta_{A^*}$ maps $\Der(\KK)$ onto a complement of $B^1(A^*,G)$ in $Z^1(A^*,G)$.

In this appendix, with the help of a celebrated result of Taussky and Zassenhaus (Theorem 1 of \cite{TZ}) we shall prove the following.

\begin{theorem}\label{C1}
Let $n \geq 2$. Then the mapping $\delta_{A^*}$ defined as in {\rm \eqref{eq C1}} is an isomorphism from $\Der(\KK)$ to a complement of $B^1(A^*,G)$ in $Z^1(A^*,G)$.
\end{theorem}

\begin{remark}
\em
The case $n = 2$ of Theorem \ref{C1} is covered by Lemma (2.2) of Smith and V\"{o}lklein \cite{SV}. However the proof that Smith and
V\"{o}lklein offer for that lemma is not very clear. They show that if $n = 2$ then $\Der(\KK) \cong H^1(A^*,G)$, obtaining this conclusion from the main theorem of V\"{o}lklein \cite{V} and a few more results from the literature to fix the case where $|\KK| \leq 9$, but they do not explain why this isomorphism is provided by the mapping $\overline{\delta}_{A^*}$. If $\dim(\Der(\KK)) < \infty$ then $\overline{\delta}_{A^*}$, being linear and injective, is also surjective and, therefore, it is an isomorphism. However $\dim(\Der(\KK))$ can be infinite. When this is the case, knowing that $\Der(\KK) \cong H^1(A^*,G)$ is of no help.
\end{remark}

\subsection{Preliminaries}

With $G$, $A$ and $A^*$ as above, let $M := M_{n+1}(\KK)$ be the vector space of square matrices of order $n+1$ with entries in $\KK$, regarded as a $G$-module with $G$ acting by conjugation on it:
\[g\in G~ : ~m\in M ~\rightarrow ~m\cdot g := g^{-1}mg.\]
Clearly, the space $C^1(M,G)$ of $1$-cochains of $G$ over $M$ properly contains $C^1(A,G)$. Nevertheless,

\begin{lemma}\label{C00}
We have $Z^1(M,G) = Z^1(A,G)$.
\end{lemma}
\begin{proof}
Let $f\in Z^1(M,G)$. The equality $f(g_1g_2) = f(g_1)\cdot g_2 + f(g_2)$ implies $\Tr(f(g_1g_2)) = \Tr(f(g_1)) + \Tr(f(g_2))$. Since $G$ is perfect, the latter equality implies that $\Tr(f(g)) = 0$ for every $g\in G$. Therefore $f(g)\in A$, since $A$ is the subspace of $M$ formed by the matrices $m\in M$ with $\Tr(m) = 0$.
\end{proof}

Let $\delta_M:\Der(\KK)\rightarrow C^1(M,G)$ be defined as follows:
\begin{equation}\label{eq C4}
\delta_M ~  \left\{\begin{array}{rcl}
 \Der(\KK) & \longrightarrow & C^1(M,G),\\
& & \\
d\in \Der(\KK) & \longrightarrow &
\delta_M(d) : g\in G \rightarrow g^{-1}d(g) \in M.
\end{array}\right.
\end{equation}
It is straightforward to check that $\delta_M(d)$ is indeed a $1$-cocycle of $G$ over $M$ for every $d\in \Der(\KK)$. Clearly, $\delta_M$ is an injective linear map from $\Der(\KK)$ to $Z^1(M,G)$. The following is Theorem 1 of Taussky and Zassenhaus \cite{TZ}.

\begin{theorem}\label{C5}
When either $n \geq 2$ or $\KK \neq \FF_3$ the mapping $\delta_M$ defined as in {\rm \eqref{eq C4}} is an isomorphism from $\Der(\KK)$ to a complement of $B^1(M,G)$ in $Z^1(M,G)$. When $n = 1$ and $\KK = \FF_3$ then $\Der(\KK) = \{0\}$ but $|H^1(M,G)|  = 3$.
\end{theorem}

The $1$-dimensional subspace $K := \langle I\rangle$ of $M$ generated by the identity matrix $I\in M$ is the centralizer $C_M(G)$ of $G$ in $M$. Given $m\in M$, we denote the coset $m+K$ by the symbol $[m]$. As explained in Section \ref{sec 3.1} (formula \eqref{Astar-M/I}), the mapping
\[\kappa ~ \left\{\begin{array}{rcl}
M/K & \longrightarrow & A^*\\
{[m]}\in M/K & \longrightarrow & \kappa({[m]}) : \left(\begin{array}{c}
a\in A \\
\downarrow\\
\Tr(ma)
\end{array}\right)
\end{array}\right.\]
is well defined and provides an isomorphism of $G$-modules from $M/K$ to the dual $A^*$ of $A$. Accordingly, $\kappa$ induces an isomorphism from $Z^1(M/K,G)$ to $Z^1(A^*,G)$ which maps $B^1(M/K,G)$ onto $B^1(A^*,G)$. The composition $\delta_{M/K} := \kappa^{-1}\circ\delta_{A^*}$ maps every $d\in\Der(\KK)$ onto the cocycle $\delta_{M/K}(d)\in Z^1(M/K,G)$ acting as follows:
\[\delta_{M/K}(d)~:~ g\in G ~ \rightarrow ~ [g^{-1}d(g)] \in M/K.\]
So, Theorem \ref{C1} is equivalent to the following.

\begin{theorem}\label{C6}
Let $n\geq 2$. Then the mapping $\delta_{M/K}$ defined as above is an isomorphism from $\Der(\KK)$ to a complement of $B^1(M/K, G)$ in $Z^1(M/K, G)$.
\end{theorem}

The rest of this subsection is devoted to a proof of Theorem \ref{C6}. We need to distinguish two cases: the {\em plain case}, where either $\ch(\KK) = 0$ or $\ch(\KK) = p > 0$ but $p$ does not divide $n+1$ and the {\em singular case}, where $\ch(\KK) = p > 0$ divides $n+1$.

\subsection{The plain case}

In this case $\Tr(I) \neq 0$. Therefore every coset of $K$ in $M$ meets $A$ in a unique element. Accordingly, $M/K$ and $A$ are isomophic as $G$-modules. Morover, for $m_1, m_2\in M$, if $[m_1] = [m_2]$ then $m_1-m_1\cdot g = m_2-m_2\cdot g$ for every $g\in G$. Therefore every coboundary $f_m: g\in G \rightarrow m-m\cdot g$ of $G$ over $M$ can be represented by a matrix $m\in A$. It follows that $B^1(M,G) = B^1(A,G)$. Since $Z^1(M,G) = Z^1(A,G)$ as stated in Lemma \ref{C00}, we immediately obtain Theorem \ref{C6} from Theorem \ref{C5} and the isomorphism $M/K \cong A$.

\subsection{The singular case}

Now $\ch(\KK) = p > 0$ divides $n+1$. Consequently $\Tr(I) = 0$ and, therefore, for every $[m]\in M/K$ all matrices of $[m]$ have the same trace. We put $\Tr([m]) := \Tr(m)$ for $m\in [m]$.  Given $F\in Z^1(M/K, G)$, a $1$-cochain $f\in C^1(M,G)$ {\em represents} $F$ is $[f] = F$, namely $[f(g)] = F(g)$ for every $g\in G$.

\begin{lemma}\label{C01}
We have $Z^1(M/K, G) =Z^1(A/K, G)$.
\end{lemma}
\begin{proof}
Let $F\in Z^1(M/K,G)$. Then $\Tr(F(g_1g_1)) = \Tr(F(g_1)) + \Tr(F(g_2))$ by the cocycle condition on $F$ for any choice of $g_1, g_2\in G$. So, the map $g\in G \rightarrow \Tr(F(g))$ is a homomorphism from $G$ to the additive group of $\KK$. However $G$ is perfect. Therefore $\Tr(F(g)) = 0$ for every $g\in G$. In other words, if $f\in C^1(M/K,G)$ represents $F$, then $\Tr(f(g)) = 0$ for every $g\in G$, namely $f\in C^1(A,G)$.
\end{proof}

\begin{lemma}\label{C02}
Every $F\in Z^1(M/K,G)$ admits at most one representative in $Z^1(M,G)$.
\end{lemma}
\begin{proof}
Let $f_1, f_2\in Z^1(M,G)$ be such that $[f_1] = [f_2]$. Then there exists a mapping $\lambda : G\rightarrow \KK$ such that $f_1(g)-f_2(g) = \lambda(g)I$ for every $g\in G$. However $f_1-f_2\in Z^1(M,G)$. Hence the mapping $f_\lambda : g\in G \rightarrow \lambda(g)I$ is a cocycle. The cocycle condition on $f_\lambda$ forces $\lambda(g_1g_2) = \lambda(g_1)+\lambda(g_2)$. Therefore $\lambda$ is the null mapping, since $G$ is perfect.
\end{proof}

The following is obvious.

\begin{corollary}\label{C03}
If every $F\in Z^1(M/K,G)$ admits a  representative in $Z^1(M,G)$ then the function which maps every $F\in Z^1(M/K, G)$ onto its representative in $Z^1(M,G)$ (unique by Lemma \ref{C02}) is an isomorphism from $Z^1(M/K,G)$ to $Z^1(M,G)$ and maps $B^1(M/K, G)$ onto $B^1(M,G)$.
\end{corollary}

\begin{remark}
\em
Under the hypotheses of Corollary \ref{C03} we have
\[Z^1(A/K,G) ~ = ~  Z^1(M/K, G) ~ \cong ~  Z^1(M,G) ~ = ~ Z^1(A,G)\]
and the isomorphism from $Z^1(M/K, G)$ to $Z^1(M,G)$ induces an isomorphism from $B^1(M/K, G) = B^1(A/K,G)$ to $B^1(M,G)$. Hence
\[H^1(A/K, G) ~ = ~  H^1(M/K,G) ~ \cong ~ H^1(M,G).\]
However, under the hypotheses we have made on $\KK$, the space $H^1(M,G)$ is a proper homorphic image of $H^1(A, G)$, with kernel of dimension $1$. Indeed $B^1(A,G) \subset B^1(M,G)$ has codimension $1$ in $B^1(M,G)$, the complements of $B^1(A,G)$ in $B^1(M,G)$ being provided by the $\KK$-spans of the coboundaries $f_{m}:g\in G\rightarrow m-m\cdot g$ for $m\in M$ such that $\Tr(m) \neq 0$.
\end{remark}

If the hypothesis of Corollary \ref{C03} is satisfied then the statement of Theorem \ref{C6} immediately follows from Theorem \ref{C5}. So, in order to obtain Theorem \ref{C6} we only need to prove the following:

\begin{claim}\label{C7}
Every $F\in Z^1(M/K,G)$ admits a representative in $Z^1(M,G)$.
\end{claim}
\begin{proof}
Note firstly that if $n = 2$ and $\KK = \FF_3$ then $H^1(M/K, G) = \{0\}$, as Smith and V\"{o}lklein show in the proof of Lemma (2.2) of \cite{SV}. On the other hand, $\Der(\FF_3) = \{0\}$. So, in this case there is nothing to prove. Accordingly, henceforth we assume that $(n, \KK) \neq (2, \FF_3)$.

As in proof of Claim \ref{rigid res}, we denote by $e_{i,j}$ the square matrix with all null entries except the $(i.j)$-entry, which is $1$. Also, $u_{i,j}(t) := I+te_{i,j}$. So, $U = \{u_{i,j}(t)~|~ i\neq j, ~ t\in \KK\}$ is a (redundant) set of generators of $G$.

Put $J := \{1, 2,..., n+1\}$ and, given $k\in J$, let $J_k := J\setminus\{k\}$. We set $U_k := \{u_{i,j}(t)~ | ~ t\in \KK,~ i, j \in J_k, ~ i\neq j\}$ and denote by $G_k$ and $M_k$ the subgroup of $G$ generated by $U_k$ and respectively the subspace of $M$ generated by $\{e_{i,j}\}_{i,j\in J_k}$. Clearly $M_k\cong M_n(\KK)$ and $G_k \cong \mathrm{SL}(n,\KK)$ is the derived subgroup of the stabilizer in $G$ of both the subspace $M_k$ of $M$ and the matrix $e_{k,k}\in M$.

Let $\pi_k:M\rightarrow M_k$ be the projection of $M$ onto $M_k$ which maps every matrix $m = \sum_{i,j\in J}m_{i,j}e_{i,j}$ of $M$ onto $|m|_k := \pi_k(m) = \sum_{i,j\in J_k}m_{i,j}e_{i,j}$. With this notation $g = |g|_k+e_{k.k}$ for every $g\in G_k$ and $|m\cdot g|_k = |m_k|\cdot g =  |m_k|\cdot |g|_k$ for every $m\in M$ and every $g\in G_k$. If we want to be pedantic, the groups $G_k$ and $\pi_k(G_k)$ are different but, since they are isomorphic in an obvious way, we shall regard them as the same object. Accordingly, if $g\in G_k$ we shall omit to distinguish between $g$ and $|g|_k$. The obvious isomorphisms $G_k\cong \mathrm{SL}(n,\KK)$ and $M_k\cong M_n(\KK)$ provide an isomorphism of modules from $(M_k,G_k)$ to $(M_n(\KK), \mathrm{SL}(n,\KK))$. Also, if $K_k = \pi_k(K)$ is the $\KK$-span of $|I|_k$ in $M_k$ and $K'$ is the subspace of $M_n(\KK)$ spanned by the identity matrix of $M_n(\KK)$, then the isomorphism from $(M_k,G_k)$ to $(M_n(\KK), \mathrm{SL}(n,\KK))$ induces an isomorphism from $(M_k/K_k,G_k)$ to $(M_n(\KK)/K', \mathrm{SL}(n,\KK))$.

Let $F\in Z^1(M/K, G)$. Given $k\in J$ the restriction $F_k: g\in G_k\rightarrow \pi_k(F(g))\in M_k/K_k$ of $F$ to $G_k$ is a cocycle of $G_k$ over $M_k/K_k$. The characteristic $p$ of $\KK$ does not divide $n$, as $p$ divides $n+1$ by assumption. Therefore the module $(M_k/K_k, G_k) \cong (M_n(\KK), \mathrm{SL}(n,\KK))$ falls into the plain case, but with $n$ replaced by $n-1$. We have already proved that the conclusion of Theorem \ref{C6} holds true in the plain case. (That conclusion holds true also if $n = 2$, since our assumption that $(n,\KK) \neq (2,\FF_3)$ forbids the exceptional case of Theorem \ref{C1} to occur for $(M_k, G_k)$). Therefore there exist a unique derivation $d_k\in \Der(\KK)$ and a representative $f_k\in C^1(M,G)$ of $F$ such that $|f_k(g)|_k = g^{-1}d_k(g)$ for every $g\in G_k$. Accordingly, $f_k(g) = g^{-1}d_k(g) + m_k(g)$ for a suitable matrix $m_k(g) = m_{k,k}(g)e_{k,k} + \sum_{i\neq k}(m_{i,k}(g)e_{i,k} + m_{k,i}(g)e_{k,i})$.

We have $\Tr(f_k(g)) = \Tr(g^{-1}d_k(g)) + m_{k,k}(g)$. However $\Tr(g^{-1}d_k(g)) = 0$ because $g^{-1}d_k(g) \in M_k$ ($\subset M$), the mapping $\delta_{M_k}(d_k):g\in G_k\rightarrow g^{-1}d_k(g)$ is a cocycle of $G_k$ over $M_k$ and $Z^1(M_k,G_k) = Z^1(A_k, G_k)$ by Lemma \ref{C00}, where $A_k \subseteq M_k$ is the subspace formed by the traceless matrices of $M_k$. Moreover $\Tr(f_k(g)) = 0$ by Lemma \ref{C01} (on $(M, G)$). Therefore $m_{k,k}(g) = 0$. So,
\begin{equation}\label{eq C5}
f_k(g) ~ = ~ g^{-1}d_k(g) + \sum_{i\neq k}(m_{i,k}(g)e_{i,k} + m_{k,i}(g)e_{k,i}).
\end{equation}
We call $f_k$ a $k$-{\em partial cocycle}. If $f_k$ and $f'_k$ are two $k$-partial cocycles which both represent $F$, then $f_k(g)-f'_k(g) \in K$ for every $g\in G$. As $d_k$ is uniquely determined by $k$, equation \eqref{eq C5} implies that $f_k(g) = f'_k(g)$ for every $g\in G_k$.

So far we have assumed only that $n \geq 2$ but from here on we need to distinguish the case of $n \geq 3$ from the case $n = 2$. \\

\noindent
{\bf Case 1}.  Let $n \geq 3$. Let $g = u_{i,j}(t) \in U$ and, for $k, h \in J\setminus\{i,j\}$ and let $f_k$ and $f_h$ be a $k$-partial cocycle and a $h$-partial cocycle which both represent $F$. As $u_{i,j}(t) \in G_k\cap G_h$, have $f_k(u_{i,j}(t)) - f_h(u_{i,j}(t)) \in K$. Recall that $u_{i,j}(t)^{-1} = I-te_{i,j}$ and $d(u_{i,j}(t)) = d(t)e_{i,j}$ for every derivation $d$. Hence $u_{i,j}(t)^{-1}d(u_{i,j}(t)) = d(u_{i,j}(t)) =  d(t)e_{i,j}$ for every derivation $d$. In view of this fact and equation \eqref{eq C5}, the condition $f_k(u_{i,j}(t)) - f_h(u_{i,j}(t)) \in K$ is equivalent to the following, where $\lambda$ is a suitable mapping from $\KK$ to $\KK$ and we write $m_{r,k}$, $m_{k,r}$, $m_{s,h}$, $m_{h,s}$ instead of $m_{r,k}(u_{i,j}(t))$, $m_{k,r}(u_{i,j}(t))$, $m_{s,k}(u_{i,j}(t))$ and $ m_{k,s}(u_{i,j}(t))$, for short.
\begin{equation}\label{eq C6}
(d_k(t)-d_h(t))e_{i,j} + \sum_{r\neq k}(m_{r,k}e_{r,k} + m_{k,r}e_{k,r}) -  \sum_{s\neq h}(m_{s,h}e_{s,h} + m_{h,s}e_{h,s}) = \lambda(t) I.
\end{equation}
This equation implies $\lambda(t) = 0$ and $d_k(t) = d_h(t)$ for every $t\in \KK$. Hence $d_k = d_h$, namely $d_k$ does not depend on the particular choice of $k$. Accordingly, henceforth we write $d$ instead of $d_k$.

Equation \eqref{eq C6} also implies that $m_{r,k}(g) = m_{k,r}(g) = 0$ for $r\not\in \{k,h\}$ and $m_{s,h}(g) = m_{h,s}(g) = 0$ for $s\not\in
\{h,k\}$. So, for $g = u_{i,j}(t)$ and $\{k,h\}\cap\{i,j\} = \emptyset$ equation \eqref{eq C5} boils down to the following:
\begin{equation}\label{eq C7}
f_k(g) ~ = ~ g^{-1}d(g) + m_{h,k}e_{h,k} + m_{k,h}(g)e_{k,h}.
\end{equation}
Given $k\not\in \{i,j\}$, equation \eqref{eq C7} holds for every $h\not\in\{i,j,k\}$. If $n > 3$ then, given $k\not\in\{i,j\}$, at least two choices are left for $h\not\in \{i,j,k\}$. Consequently \eqref{eq C7} implies that $m_{h,k}(g) = m_{k,h}(g) = 0$, namely $f_k(g) = g^{-1}d(g)$. In this case $F$ admits a representative $f\in C^1(M,G)$ such that $f(g) = \delta_M(d)(g) = g^{-1}d(g)$ for every $g\in U$. As $U$ is a set of generators for $G$, the cocycle $\delta_M(d)$ is a representative of $F$. So, Claim \ref{C7} is proved. Actually, we have proved more than that claim. Indeed we have proved just the statement of Theorem \ref{C6}.

Let now $n = 3$. Then \eqref{eq C7} shows that, for every partition $\{\{i,j\},\{k,h\}\}$ of $\{1,2,3,4\}$, the cocycle $F$ admits a representative $f_{\{k,h\}}\in C^1(M,G)$ such that
\begin{equation}\label{eq C8}
f_{\{k,h\}}(u_{i,j}(t)) ~ = ~ d(t)e_{i,j} + m_{h,k;i,j}(t)e_{h,k} + m_{k,h:i,j}(t)e_{k,h}
\end{equation}
for suitable functions $m_{h,k;i,j}, m_{k,h;i,j} : \KK \rightarrow \KK$. Since $\{i,j\}\cap\{k,h\} = \emptyset$, we have $u_{i,j}(t)u_{k,h}(s) = u_{k,s}(s)u_{i,j}(t)$. By \eqref{eq C8} and the cocycle condition on $F$, this equality implies that there exist a mapping $\lambda:\KK\times \KK \rightarrow \KK$ such that
\begin{equation}\label{eq C9}
\begin{array}{rcl}
sm_{h,k;i,j}(t)(e_{h,h} - e_{k,k}) -s^2m_{h,k;i,j}(t)e_{k,h} + & & \\
+ tm_{j,i;k,h}(s)(e_{i,i}- e_{j,j}) + t^2m_{j,i;k,h}(s)e_{i,j} & = & \lambda(t,s)I.
\end{array}
\end{equation}
Therefore $s^2m_{h,k;i,j}(t) = t^2m_{j,i;k,h}(s) = 0$ for every choice of $s,t\in \KK$, namely $m_{h,k;i,j}(t) = m_{j,i;k,h}(s) = 0$ for any $s, t\in \KK$. Consequently, $\lambda(t,s) = sm_{h,k;i,j}(t) = tm_{j,i;k,h}(s) = 0$. By the same argument but with $u_{k,h}(s)$ replaced by $u_{h,k}(s)$ we also obtain that $m_{k,h;i,j}(t) = 0$ for every $t\in \KK$. Accordingly, \eqref{eq C8} just says that $f_{\{k,h\}}(u_{i,j}(t)) = d(t)e_{i,j}$. The conclusion now follows in the same way as when $n > 3$. \\

\noindent
{\bf Case 2.} Let $n = 2$.  Formula \eqref{eq C5} is still valid but we cannot use it to compare $f_k(u_{i,j}(t))$ with $f_h(u_{i,j}(t))$ for $h\not \in \{i,j,k\}$, since now $\{i,j,k\} = \{1,2,3\} = J$. We shall exploit commutation relations on pairs of elements of the set $U$ of generators we have chosen for $G$. In view of this task, having introduced a bit more notation will be helpfull.

Given two matrices $m_1, m_2\in M$, if $m_1-m_2\in K$ then we write $m_1\equiv m_2$. With this notation, given three representatives $f, f', f''$ of $F$ in $C^1(M,G)$ ad two elements $g_1, g_2$ of $G$, the cocycle condition on $F$ implies that
\begin{equation}\label{eq C10}
f(g_1)\cdot g_2 + f'(g_2) ~ \equiv ~ f''(g_1g_2).
\end{equation}
Consequently, if $g_1g_2 = g_2g_1$ then
\begin{equation}\label{eq C11}
f(g_1)\cdot g_2 + f'(g_2) ~ \equiv ~ f'(g_2)\cdot g_1 + f(g_1).
\end{equation}
From \eqref{eq C10} with $g_1 = g$ and $g_2 = g^{-1}$ we also obtain
\begin{equation}\label{eq C12}
f'(g^{-1}) ~ \equiv ~ -f(g)\cdot g^{-1}.
\end{equation}
Let $(g_1, g_2) = g_1g_2g_1^{-1}g_2^{-1}$ be the commutator of $g_1$ and $g_2$. By \eqref{eq C10} and \eqref{eq C12} we obtain
\begin{equation}\label{eq C13}
f''((g_1, g_2)) \equiv  f(g_1)\cdot g_2g_1^{-1}g_2^{-1} + (f'(g_2) - f(g_1))\cdot g_1^{-1}g_2^{-1} - f'(g_2)\cdot g_2^{-1}.
\end{equation}
With the help of \eqref{eq C11}, recalling that $u_{i,j}(t)$ and $u_{k,h}(s)$ commute when either $j = h$ or $i = k$, from \eqref{eq C5} we obtain that for every ordering $(i,j,k)$ of $\{1,2,3\}$ there exist scalars $a_{i,j}, b_{i,j}\in \KK$ such that
\begin{equation}\label{eq C14m}
f_k(u_{i,j}(t)) ~ = ~ d_k(t)e_{i,j} + a_{i,j}(te_{k,i} - t^2e_{k,j}) + b_{i,j}(t^2e_{i,k} + te_{j,k}), \hspace{3 mm}\forall t\in \KK.
\end{equation}
Moreover $a_{i,j} + a_{i,k} = 0$ and $b_{i,j} + b_{k,j} = 0$ for every ordering $(i,j,k)$ of $\{1,2,3\}$. Recall now that also $(u_{i,j}(t), u_{j,k}(s)) = u_{i,k}(ts)$ for every ordering $(i,j,k)$ of $\{1,2,3\}$. With elementary but laborious calculations from \eqref{eq C13} we obtain that $a_{i,j} = 0$,  $b_{i,j} = b_{i,k} = 0$ and
\begin{equation}\label{eq C14}
d_k(t) + d_i(s) ~= ~ d_j(ts), \hspace{5 mm}\forall t,s\in \KK.
\end{equation}
Equation \eqref{eq C14} implies that $d_k = d_i = d_j =  d$ for a unique derivation $d\in \Der(\KK)$, while the conditions $a_{i,j} = b_{i,j} = 0$ imply that
\[f_k(u_{i,j}(t)) ~ = ~ d(t)e_{i,j} ~ (= ~ (I-te_{i,j})d(t)e_{i,j} ~ = ~ u_{i,j}(t)^{-1}d(t)e_{i,j}).\]
As in the previous cases, $[\delta_M(d)(u)] = F(u)$ for every generator $u\in U$ of $G$. The conclusion follows.
\end{proof}

The proof of Theorem \ref{C6} is complete. Thus, Theorem \ref{C1} is proven.

 \end{document}